\documentclass[11pt]{amsart}

\usepackage{amssymb}
\usepackage{bm}
\usepackage[centertags]{amsmath}
\usepackage{amsfonts}
\usepackage{amsthm}
\usepackage{mathrsfs}
\usepackage[usenames]{xcolor}
\usepackage[normalem]{ulem}
\usepackage{comment}
\usepackage{enumitem}
\usepackage{graphicx}
\usepackage[makeroom]{cancel}
\usepackage[all]{xy}

\newcommand{\vcxymatrix}[1]{\vcenter{\xymatrix{#1}}}

\theoremstyle{definition}
\newtheorem{thm}{Theorem}[section]
\newtheorem{dfn}[thm]{Definition}
\newtheorem{lem}[thm]{Lemma}
\newtheorem{prop}[thm]{Proposition}

\newtheorem{rem}[thm]{Remark}
\newtheorem{exa}[thm]{Example}

\newtheorem*{defn*}{Definition}

\newtheorem*{thm*}{Theorem}

\newtheorem*{cor*}{Corollary}

\newtheorem*{prp*}{Proposition}

\newtheorem*{ntt}{Notation}

\newcommand{\N}{\mathbb{N}}
\newcommand{\R}{\mathbb{R}}
\newcommand{\e}{\varepsilon}
\newcommand{\de}{\delta}
\newcommand{\De}{\Delta}
\newcommand{\al}{\alpha}

\newcommand{\la}{\lambda}
\newcommand{\La}{\Lambda}
\newcommand{\Ga}{\Gamma}
\newcommand{\ze}{\zeta}

\newcommand{\cD}{\mathcal D}
\newcommand{\cH}{\mathcal H}

\newcommand{\trn}[1]{{\left\vert\kern-0.25ex\left\vert\kern-0.25ex\left\vert #1 
    \right\vert\kern-0.25ex\right\vert\kern-0.25ex\right\vert}}

\newcommand{\supp}{\text{\rm supp}}
\newcommand{\ie}{{\it i.e.}}

\long\def\symbolfootnote[#1]#2{\begingroup
\def\thefootnote{\fnsymbol{footnote}}\footnote[#1]{#2}\endgroup}

\allowdisplaybreaks

\begin{document}

\title[The space ${L_1(L_p)}$ is primary]{The space $\mathbf{L_1(L_p)}$ is primary for $\mathbf{1<p<\infty}$}

\author{R. Lechner}

\address{R. Lechner, Institute of Analysis, Johannes Kepler University Linz, Altenberger Strasse 69,
  A-4040 Linz, Austria}

\email{richard.lechner@jku.at}

\author{P. Motakis}

\address{P. Motakis, Department of Mathematics and Statistics, York University, 4700 Keele Street, Toronto, Ontario, M3J 1P3, Canada}

\email{pmotakis@yorku.ca}

\author{P.F.X. M\"uller}

\address{P.F.X. M\"uller, Institute of Analysis, Johannes Kepler University Linz, Altenberger
  Strasse 69, A-4040 Linz, Austria}

\email{paul.mueller@jku.at}

\author{Th.~Schlumprecht}

\address{Th.~Schlumprecht, Department of Mathematics, Texas A\&M University, College Station, TX
  77843-3368, USA, and Faculty of Electrical Engineering, Czech Technical University in Prague,
  Zikova 4, 16627, Prague, Czech Republic}

\email{schlump@math.tamu.edu}

\thanks{The first  author was supported by the Austrian Science Foundation (FWF) under
  Grant Number Pr.Nr. P32728. The third author were supported by the Austrian Science Foundation (FWF) under
  Grant Number Pr.Nr. P28352. The fourth author was supported by the National
  Science Foundation under Grant Number  DMS-1764343.}

\date{\today}

\keywords{}

\subjclass[2010]{46B09, 46B25, 46B28,  47A68.}

\begin{abstract}
  The classical Banach space $L_1(L_p)$ consists of measurable scalar functions $f$ on the unit square for which
  $$\|f\| = \int_0^1\Big(\int_0^1 |f(x,y)|^p dy\Big)^{1/p}dx < \infty. $$
   We show that $L_1(L_p)$  $(1 <  p <  \infty)$ is  primary, meaning that, whenever $L_1(L_p) = E\oplus F$ then either $E$ or $F$ is isomorphic to $L_1(L_p)$.
      More generally we show that   $L_1(X)$ is primary, for a large class of rearrangement invariant Banach function spaces. 
\end{abstract}

\maketitle

\tableofcontents

\section{Introduction}

The decomposition of  normed linear spaces into direct sums
and the analysis of the associated projection operators is central to  important chapters
in the theory of modern and classical Banach spaces.
In a seminal paper J. Lindenstrauss  \cite{JL}  set forth an influential research program, aiming at detailed investigations of complemented subspaces  and  operators on
Banach spaces. 

The  main question adressed by J. Lindenstrauss  was this: 
Which are the spaces $X$ that cannot  be further decomposed into two ``essentially different, infinite dimensional subspaces?
That is to say,   which are the Banach spaces   $X$ that  are not
isomorphic to the direct  sum of
 two  {infinite dimensional} spaces $Y$ and $Z$, where  neither, $Y$ nor $Z$, are isomorphic to $X$?
This condition would be satisfied if $X$ were indecomposable, i.e.,  for any decomposition of $X$ into two spaces, one of them has to be finite dimensional.
Separately, such a space could  be primary, meaning that for any decomposition of $X$ into two spaces, one of them has to be isomorphic to $X.$
The first example of an   indecomposable Banach spaces  was constructed by  T. Gowers and B. Maurey \cite{GM}  who also showed that their space $X_{\rm GM} $ is not primary-- 
 indeed, the infinite dimensional component of $X_{\rm GM} \sim X\oplus Y  $ is not isomorphic to the whole space.   

 While indecomposable spaces play a tremendous role  (\cite{AH,GM,M})  in the present day study of non classical Banach spaces,
a wide variety of  Banach function spaces may  usually be decomposed, for instance
by restriction  to subsets, or by taking conditional expectations etc.
This provides the background for  the program set forth  by J. Lindenstrauss
to determine the ``classical''
 spaces that  are primary.

\subsection {Background and History} The term classical Banach space--while not formally defined--applies certainly to the 
space $C[0,1]$ and to  scalar and vector valued  Lebesgue spaces.
The space of continuous functions was shown to  be primary by J. Lindenstrauss and A. Pelczynski~\cite{LP},  who posed the corresponding problem  for scalar valued $L_p$ spaces.
Its elegant solution,  by  P. Enflo via B. Maurey  \cite{EM}, introduced a groundbreaking method of proof
which applies equally well to each of the   $L_ p $ spaces, $ (1 \le p < \infty)$. 
Later  alternative  proofs  were given by 
D. Alspach P. Enflo E. Odell  \cite{AEO}  for $L_p $ in the reflexive range  $1<p<\infty$ 
  and by P. Enflo and T. Starbird \cite{ES} for $L_1$.
  
Exceptionally deep results on the  decomposition of  Bochner-Lebesgue spaces $L_p(X)$ are due to M. Capon \cite{CAPLX,CAPLL}  who 
obtained  that those spaces are is primary in the following cases.
\begin{enumerate}
\item[-] $X$ is a Banach space with a symmetric basis, and $1 \le p < \infty $. 
  \item[-]   $ X = L_ q  $ where $ 1< q < \infty $ and $ 1< p < \infty $.
  \end{enumerate}
This leaves the  spaces    $ L_1(L_p) $   and $ L_p(L_1) $ among the most prominent examples of
classical Banach spaces for which  primariness is open.

The purpose of the present paper is to prove that $ L_1(L_p) $ is
primary. Our proof works equally well for real and complex valued functions.
Before we turn to describing our work, 
we review in some detail the development of methods  pertaining to the  spaces  $L_p$, and more broadly to rearrangement invariant spaces.

Projections on those spaces are studied effectively  alongside the Haar system  and the
reproducing properties of its block bases. 
The  methods developed for proving that a particular Lebesgue space $L_p$,   is primary may be divided into two basic classes,  depending on whether the Haar system is an
unconditional Schauder basis, or not. 

In case of unconditionality,  the most flexible method goes back to  the work of D. Alspach, P. Enflo, and  E. Odell \cite{AEO}. 
For a  linear operator $T$ on $L_p $ it  yields  a block basis of the Haar system  $\widetilde{h}_I $ and  a bounded sequence of scalars $a_I $
forming an approximate eigensystem of  $T$
such that
\begin{equation}
  \label{eq:13-1-21-1}
   T\widetilde{h}_I = a_I \widetilde{h}_I + \text{a small error}
\end{equation}
and $\widetilde{h}_I$  spans  a complemented copy of the  space $L_p $. 
Thus, when restricted to  $ span \widetilde{h}_I$, the operator  $T$ acts as a bounded Haar multiplier. Since the Haar basis  is unconditional, the Haar multiplier is invertible   if $  |a_I | > \delta $ for some $\delta  > 0 . $

 D. Alspach, P. Enflo, and  E. Odell \cite{AEO} arrive at \eqref{eq:13-1-21-1} by
ensuring that, for  $\varepsilon_{I, J }> 0 $  sufficiently small, 
 the following linearly ordered set of constraints holds true, 
\begin{equation}
  \label{eq:13-1-21-2}
  |\langle  T \widetilde{h}_I, \widetilde{h}_J \rangle | +
  |\langle  \widetilde{h}_I,  T^*\widetilde{h}_J \rangle |
  \le \varepsilon_{I, J } \quad\text{for} \quad I \prec J , 
\end{equation}
where the relation $\prec $ refers to the lexicographic order on the collection of
dyadic intervals. Utilizing that the independent $\{-1,+1\}$-valued Rademacher system $\{r_n\}$
is a weak null sequence in $L_p$, $(1 \le p < \infty )$,   D. Alspach, P. Enflo, and  E. Odell \cite{AEO} obtain, {\em by induction} along $\prec$, the block basis
$ \widetilde{h}_I$ satisfying \eqref{eq:13-1-21-2}.

The Alspach-Enflo-Odell method provides the basic model for the study of operators on function spaces in which the Haar system is unconditional; this applies in particular to rearrangement
invariant spaces  in the work of  W. Johnson, B. Maurey, G. Schechtman, and L. Tzafriri \cite{JMST}, 
and  D. Dosev, W. Johnson, and  G. Schechtman  \cite{DJS}.

In $L_1 $ the Haar system is a Schauder basis but fails to be unconditional. The basic methods for proving that $L_1 $ is primary are due to  P. Enflo via B. Maurey \cite{EM}  on the one hand side and
  P. Enflo and T. Starbird  \cite{ES} on the other hand side.  
For  operators $T$ on $L_1 $ the Enflo-Maurey method yields a block basis of the Haar basis $\widetilde{h}_I $ and  a bounded measurable function $g$,  such that
\begin{equation}
  \label{eq:13-1-21-5}
  (T f) (t) =  g(t)  f(t) + \text{a  small error},
\end{equation}
for $ f \in \mathrm{span}\{  \widetilde{h}_I\}$,  and  $\widetilde{h}_I$  spans  a copy of  $L_1 $.  Thus
the restricted operator $T$ acts as a bounded multiplication operator and is invertible  if $  |g | > \delta $ for some $\delta > 0 . $
The full strength of the proof by  Enflo-Maurey is applied to show that the  representation \eqref{eq:13-1-21-5} holds true.

Enflo-Maurey  \cite{EM}, exhibit in their proof of \eqref{eq:13-1-21-5} a sequence of bounded scalars  
$a_I $ such that 
\begin{equation}
  \label{eq:13-1-21-4}
   T\widetilde{h}_I = a_I \widetilde{h}_I + \text{a very small error}
\end{equation}
Since the  Rademacher system $\{r_n\}$ is a weakly null sequence in $L_1$,  \eqref{eq:13-1-21-4} may  be  obtained directly  by choosing a block basis for which
the constraints \eqref{eq:13-1-21-2}   and
\begin{equation}
  \label{eq:14-1-21-1}
  |\langle  T \widetilde{h}_I, \widetilde{h}_J \rangle | 
  \le 2 \varepsilon_{I, J } \quad\text{for} \quad I \neq J , 
\end{equation}
hold true.
Remarkably,  {\em until very  recently} \cite{LMMS}, eigensystem representations such as \eqref{eq:13-1-21-4} were not exploited  in the context of $L ^1  $, where the
Haar system is not unconditional.

The  powerful precision  of  $L_1$-constructions with dyadic martingales and block basis of the Haar system   is in full  display  in \cite{JMS} and  \cite{T}.
W. Johnson, B.  Maurey and G. Schechtman 
determined  in \cite{JMS} a normalized weakly null sequence in $L_1 $ such that each  of its 
infinite subsequences contains in its span  a block basis of the Haar system    $\widetilde{h}_I$, spanning  a copy of $L_1 . $ Thus 
$L_1 $ fails to satisfy  the unconditional subsequence property,  a problem posed by B. Maurey and H. Rosenthal \cite{MR}.
By contrast M. Talagrand   \cite{T}  constructed a dyadic martingale difference sequence  $g_{n,k} $ such  that neither $X = \overline{\mathrm{span}\, }^{L_1}  \{ g_{n,k} \}$ nor $L_1/X$ contain a copy of $L_1$.

The  investigation  of complemented subspaces in Bochner Lebesgue spaces was initiated  by M.  Capon \cite{CAPLX,CAPLL}  who pushed  hard to further the development of the scalar methods, and
proved that 
  $L_p(X)$ $(1\le p < \infty)$ is primary when  $X$ is a Banach space with a symmetric basis, say $(x_{k}) .$
Specifically, 
   M. Capon \cite{CAPLX} showed, that for an operator $T$ on $L_p(X)$, there exists a block basis of the
  Haar basis $\widetilde{h}_I $, a subsequence of the symmetric basis $(x_{k_n}) $
and a bounded measurable  $g$ such that 
$$ (T(f  \otimes x_{k_n})) (t) =  g(t)
f(t) \otimes x_{k_n}+ \text{a  small error} ,$$
for $ f \in   \mathrm{span} \{\widetilde{h}_I \}$.
Thus on $ \mathrm{span} \{\widetilde{h}_I \} \otimes  \mathrm{span} \{x_{k_n}\}$
the operator $T$ acts like $M_g \otimes Id  $ where $M_g $ is the multiplication operator induced by $g. $
Simultaneously,  M.  Capon shows 
 that
  the tensor products  form an approximate eigensystem,   
$$ T(\widetilde{h}_I\otimes x_{k_n}) = a_I \widetilde{h}_I \otimes x_{k_n}+ \text{a small error}$$ 
where $ a_{I }$ is a  bounded sequence of scalars and $\widetilde{h}_I$  spans  a copy of
$L_p $. 

In the mixed norm space $L_p (L_ q)  $ where $ 1< q < \infty $ and $ 1< p < \infty $
the bi-parameter Haar system forms an unconditional basis. Displaying   extraordinary
combinatorial strength, M. Capon \cite{CAPLL} exhibited  a  so called local product block basis
$ k_{I \times J }$, spanning a complemented copy of $L_p (L_q ) , $ such that
$$T k_{I \times J}  = a_{I \times J} k_{I \times J}+  \text{a  small error}.$$

\subsection{The present paper.}
Now we  turn to  describing  the main  ideas  in the approach of the present
paper.

Introducing a  transitive relation between operators $S,T$  on a Banach space $X ,$
we say that $T$ is a projectional factor of $S$ if there exist transfer operators $ A, B \colon X \to X $
such that 
\begin{equation}\label{14-1-21-5}
  S = ATB \quad\text{and}\quad   B  A = Id_X .
\end{equation}
If  merely $S = ATB$,  without the additional constraint $BA =Id_X$,  we say that $T$ is a factor of $S$, or equivalently that $S$ factors through $T$.

Clearly, if 
$T$ is a projectional factor of $S$ and $S$ one  of $R$ then
$T$ is a projectional factor of $R$, i.e., being a projectional factor is a transitive relation.
Given any operator $T : L_1(L_p) \to L_1(L_p)$ the goal is to show that either $T$ or $Id -T$ is a  factor of the  identity $Id : L_1(L_p) \to L_1(L_p)$.
In section~\ref{prelim-factors1} we    expand on the   quantitative aspects  of the transitive  relation \eqref{14-1-21-5} and the role it plays in
providing a step-by-step reduction of the problem, allowing for the replacement of  a given operator with a simpler one, that is easier to work with.

Let  $T : L_1(L_p) \to L_1(L_p)$ be a bounded linear operator. It is represented by a matrix $T = ( T^{I,J})$ of operators $ T^{I,J} : L_1 \to L_1 $,
indexed by pairs of dyadic intervals $(I,J) , $  that is,  on 
$f \in L_1(L_p)$ with Haar expansion
\begin{equation}
  f= \sum x_J h_J/|J|^{1/p}, \qquad x_J \in L_1,  
\end{equation}
the operator $T$ acts by 
\begin{equation}
 T f= \sum_I  (\sum_J T^{I,J}x_J ) h_I/|I|^{1/p}. 
\end{equation}
Theorem~\ref{main theorem}, the main result of this paper, 
asserts 
that there exists a bounded operator $ T^{0} :L_1 \to L_1 $
such that
\begin{equation*}\label{14-1-21-6}
  \text{$T$ is a projectional factor of
    $ T^0\otimes Id_{L_p}$
    ,}
\end{equation*}
meaning that  
there exist
bounded transfer operators $A, B : L_1(L_p) \to L_1(L_p) $ such that  $ B  A = Id _{L_1(L_p)} $ and 
\begin{equation}\label{14-1-21-6a}
  \vcxymatrix{ L_1(L_p) \ar[d]_{T}   &   L_1(L_p)\ar[l] ^{A} \ar[d]^{T^0\otimes Id} \\
L_1(L_p)   \ar[r] _{B} & L_1(L_p) }
\end{equation}
The  ideas involved in the proof of Theorem~\ref{main theorem},
are  based on the interplay of topological, geometric, and probabilistic principles.
Specifically we  build on compact families of $L_1 $-operators, extracted  from $  span \{  T^{I,J} \} $,  and large deviation estimates for  empirical processes: 

\begin{enumerate}[leftmargin=20pt,label=(\alph*)]

\item (Compactness.) We utilize  the  Semenov-Uksusov characterization  \cite{semenov:uksusov:2012} of
Haar multipliers on  $L_1  $ and uncover  compactness properties of the operators $ T^{I,J} : L_1 \to L_1 $.
See Theorem~\ref{relatively compact orbit} and Theorem~\ref{fundamental lemma}.
  
\item (Stabilization.) Large deviation  estimates for the empirical distribution method  gave rise to  a novel connection  between factorization problems on $L_ 1 (L_p) $ and
the concentration of measure phenomenon. See Lemma~\ref{concentration splitting} and Lemma~\ref{concentration splitting vector valued}.  
  . 

\end{enumerate}

\noindent
 {\bf Step 1.}
 We say that $T$ is a diagonal operator if $ T^{I,J} = 0$  for $ I \neq J , $ in which case we put  $  T^{L} =T^{L,L}$. 
The first step provides the reduction  to  diagonal operators.
Specifically,  Theorem~\ref{arbitrary to X-diagonal} asserts,
that for any operator $T = ( T^{I,J})$ there exists a diagonal operator $T_{\rm diag} =( T^{L} )$
such that 
\begin{equation}\label{14-1-21-6}
\text{$T$ is a projectional factor of $T_{\rm diag} =( T^{L} )$.}
\end{equation}
 The reduction \eqref{14-1-21-6}  results from compactness properties
 for the   family of $L_1 $ operators $ T^{I,J}$ established in Theorem~\ref{relatively compact orbit} and Theorem~\ref{fundamental lemma}.
 Specifically, if  $ f \in L_1$
then the set
\begin{equation}
  \{ T^{I,J}f \colon I, J \in  \mathcal{D} \} \subset L_1
\quad\text{is weakly relatively compact}; 
\end{equation}
if, moreover,  $T^{I,J}$ satisfies uniform off-diagonal estimates 
\begin{equation}
  \label{16-1-21-1}
\sup_{I,J}|\langle  T^{I,J}h_L ,  h_M \rangle| < \varepsilon_{L, M} , \quad\text{for}\quad  L \ne M .
  \end{equation}
  then, for $ \eta > 0 ,$
  there exists a stopping time collection of dyadic intervals $  \mathcal{A}$
  satisfying  $|\limsup  \mathcal{A}| > 1 -\eta$
 such that the set of operators 
\begin{equation}
\{ T^{I,J}P_{\mathcal{A}}  \colon  I, J \in \mathcal{D}    \} \subset L(L_1) 
\quad \text{is  relatively  norm-compact}. 
  \end{equation}
Recall    that   $   \mathcal{A} \subseteq  \mathcal{D}$ is a stopping time collection if
  for $K, L \in  \mathcal{A} $ and $ J \in  \mathcal{D} $ the assumption 
  $ K \subset J \subset L $ implies that  $ J \in  \mathcal{A} .$   
  By Theorem~\ref{diagonal formula}, 
  the orthogonal projection
  $$P_{\mathcal{A}} ( f ) = \sum_{I \in \mathcal{A}} \langle f , h_I\rangle h_I /|I| ,$$
  is bounded on $L_1 $ when  $   \mathcal{A} $ is a stopping time collection of dyadic intervals.

\par  
\noindent
 {\bf Step 2.}
Next we show that it suffices  to prove the factorization \eqref{14-1-21-6a} for   diagonal operators satisfying uniform off-diagonal estimates.  
We say that  $   T = (R^{L}) $ is a reduced diagonal operator if the  $R^L : L_1 \to L_1 $ satisfy  
\begin{equation}
  \label{14-1-21-11}
\sup_{L}|\langle R^Lh_I ,  h_J \rangle| < \varepsilon_{I, J} , \quad\text{for}\quad  I \ne J .
  \end{equation}
Proposition~\ref{entries uniformly close to multipliers} asserts that,
there exists a reduced diagonal operator $   T^{\rm red}_{\rm diag} = (R^{L}) $ satisfying \eqref{14-1-21-11},
such that
\begin{equation}\label{14-1-21-7}
\text{$T_{\rm diag} =( T^{L} )$ is a projectional factor of $   T^{\rm red}_{\rm diag} = (R^{L}) .$}
               \end{equation}
               To  prove \eqref{14-1-21-7} we  utilize the  compactness properties of  $T_{\rm diag} = (T^{L}) $
               together with
measure concentration estimates  \cite{BLM, Sch} associated to  the empirical distribution method.
See Lemma~\ref{concentration splitting} and Lemma~\ref{concentration splitting vector valued}. 

  \noindent
 {\bf Step 3.}
 Next we show that we may replace  reduced diagonal operators 
 by stable   diagonal operators.
  We say that $   T^{\rm  stbl}_{\rm diag} = (S^{L}) $ is a stable  diagonal operator  if
\begin{equation}
  \label{14-1-21-12}
  \| S^L - S^M \| < \varepsilon_M , 
  \end{equation}
  for dyadic intervals $M, L $ satisfying $ L \subseteq M . $  We obtain  in Proposition~\ref{compact entries to near-tensor}, that for any reduced diagonal operator $   T^{\rm red}_{\rm diag} $
  there exists a stable  diagonal operator $   T^{\rm  stbl}_{\rm diag} $ such that
\begin{equation}\label{14-1-21-8}
\text{$   T^{\rm red}_{\rm diag} = (R^{L}) $ is a projectional factor of  $   T^{\rm  stbl}_{\rm diag}  = (S^{L}) .$}
\end{equation}
We verify \eqref{14-1-21-8} exploiting again the compactness properties of  $T^{\rm red}_{\rm diag} = (R^{L}) $ in tandem with  
the probabilistic estimates  of  Lemma~\ref{concentration splitting} and Lemma~\ref{concentration splitting vector valued}. 

\noindent
{\bf Step 4.}
Proposition~\ref{T tensor identity} provides the final step of the argument. It  asserts that for any stable  diagonal operator $   T^{\rm  stbl}_{\rm diag}  =(S^L)$ there exists 
a bounded operator $ T^{0} :L_1 \to L_1 $ such that,
\begin{equation}\label{14-1-21-9}
\text{$   T^{\rm  stbl}_{\rm diag} $  is a projectional factor of $ T^0\otimes Id_X .$}
  \end{equation}
  To prove  \eqref{14-1-21-9}
  we set up a telescoping chain of operators connecting any of the $ S^L $ to  $S^{[0,1]} $ and invoke  the stability estimates \eqref{14-1-21-12} available for   the operators $S^I $
  when $ L \subset  I \subset  [0,1] . $ Thus we may finally take  $T^0 = S^{[0,1]} .$

  \noindent
{\bf Step 5.}
{Retracing} our steps, taking into account that the notion of projectional factors forms a  transitive relation, 
yields \eqref{14-1-21-6a}.

\section{Preliminaries}\label{prelim-factors}

\subsection{Factors and Projectional Factors up to Approximation} \label{prelim-factors1}
A common strategy in proving primariness of spaces such as $L_p$ is to study the behavior of a bounded linear operator on a $\sigma$-subalgebra on a subset of $[0,1)$ of positive measure. This process may have to be repeated several times.  We introduce some language that will make this process notationally easier.

\begin{dfn}\label{projectional factor}
Let $X$ be a Banach space, $T,S:X\to X$ be bounded linear operators and let $C\geq 1$, $\e\geq0$.
\begin{enumerate}[leftmargin=20pt,label=(\alph*)]

\item\label{projectional factor a} We say that {\em $T$ is a $C$-factor of $S$ with error $\e$} if there exist $A,B:X\to X$ with $\|BTA-S\| \leq \e$ and $\|A\|\|B\|\leq C$. We may also say that {\em $S$ $C$-factors through $T$ with error $\e$}.

\item\label{projectional factor b} We say that {\em $T$ is a $C$-projectional factor of $S$ with error $\e$} if there exists a complemented subspace $Y$ of $X$ that is isomorphic to $X$ with associated projection and isomorphism $P,A:X\to Y$ (\ie,   $A^{-1} P A$ is the identity on $X$),   so that $\|A^{-1}PTA - S\| \leq \e$ and $\|A\|\|A^{-1}P\|\leq C$. We may also say that {\em $S$ $C$-projectionally factors through $T$ with error $\e$}.

\end{enumerate}
When the error is $\e = 0$ we will simply say that $T$ is a $C$-factor or $C$-projectional factor of $S$.
\end{dfn}

\begin{rem}
If $T$ is a $C$-projectional factor of $S$ with error $\e$ then $I-T$ is a $C$-projectional factor of $I-S$ with error $\e$. Indeed, if $P$ and $A$ witness Definition \ref{projectional factor b}, then $PA = A$ and therefore $A^{-1}P(I-T)A = I - A^{-1}PTA$, i.e., $\|A^{-1}P(I-T)A - (I-S)\| \leq \e$.
\end{rem}

In a certain sense, being an approximate factor or projectional factor is a transitive property.

\begin{prop}
\label{approximate factor transitive}
Let $X$ be a Banach space and $R,S,T:X\to X$ be bounded linear operators.
\begin{enumerate}[leftmargin=20pt,label=(\alph*)]

\item \label{approximate factor transitive a} If $T$ is a $C$-factor of $S$ with error $\e$ and $S$ is a $D$-factor of $R$ with error $\de$ then $T$ is a $CD$-factor of $R$ with error $D\e +\de$.

\item \label{approximate factor transitive b} If $T$ is a $C$-projectional factor of $S$ with error $\e$ and $S$ is a $D$-projectional factor of $R$ with error $\de$ then $T$ is a $CD$-projectional factor or $R$ with error $D\e +\de$.

\end{enumerate}

\end{prop}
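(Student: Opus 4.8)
The plan is to prove both parts by composing the witnessing operators and pushing the estimates through the triangle inequality; no new idea is needed beyond bookkeeping of norms.

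For part \ref{approximate factor transitive a}, let $A_1,B_1:X\to X$ witness that $T$ is a $C$-factor of $S$ with error $\e$, so $\|B_1TA_1-S\|\le\e$ and $\|A_1\|\|B_1\|\le C$, and let $A_2,B_2:X\to X$ witness that $S$ is a $D$-factor of $R$ with error $\de$. I would set $A:=A_1A_2$ and $B:=B_2B_1$. Writing $BTA-R = B_2(B_1TA_1-S)A_2 + (B_2SA_2-R)$ and using submultiplicativity of the operator norm together with $\|A_2\|\|B_2\|\le D$ gives $\|BTA-R\|\le \|B_2\|\,\|A_2\|\,\e + \de \le D\e+\de$, while $\|A\|\|B\|\le \|A_1\|\|A_2\|\|B_1\|\|B_2\|\le CD$. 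This is exactly the assertion.

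For part \ref{approximate factor transitive b} I would first record the convenient reformulation of Definition \ref{projectional factor}\ref{projectional factor b}: \emph{$T$ is a $C$-projectional factor of $S$ with error $\e$ if and only if there exist bounded $A,B:X\to X$ with $BA=\mathrm{Id}_X$, $\|A\|\|B\|\le C$, and $\|BTA-S\|\le\e$.} One direction is immediate by taking $B:=A^{-1}P$ (the parenthetical in the definition already says $A^{-1}PA=\mathrm{Id}_X$); for the converse one sets $Y:=A(X)$ and $P:=AB$, and checks that $P^2=A(BA)B=AB=P$ with $P|_Y=\mathrm{Id}_Y$ and range exactly $Y$, that $A:X\to Y$ is an isomorphism inverted by $B|_Y$ (in particular $Y$ is closed and isomorphic to $X$), and that $A^{-1}PA=\mathrm{Id}_X$. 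With this reformulation the computation from part \ref{approximate factor transitive a} applies verbatim to the composed pair $A:=A_1A_2$, $B:=B_2B_1$, and one only has to additionally verify that it is still left-invertible, which it is: $B_2B_1A_1A_2=B_2A_2=\mathrm{Id}_X$. Translating back to the language of Definition \ref{projectional factor}\ref{projectional factor b}, the complemented subspace realizing the composition is $Y=A_1(Y_2)$ with projection $A_1A_2B_2B_1$.

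I do not anticipate any genuine obstacle. The only point that needs a moment's care is the equivalence above, namely that $Y=A(X)$ is indeed a complemented subspace isomorphic to $X$ whenever $BA=\mathrm{Id}_X$, and that composing two left-invertible pairs again yields a left-invertible pair; everything else is the same triangle-inequality estimate used twice.
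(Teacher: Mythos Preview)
Your proposal is correct and essentially the same as the paper's argument. The paper skips part \ref{approximate factor transitive a} as ``straightforward'' and for part \ref{approximate factor transitive b} works directly in the projection/isomorphism language of Definition~\ref{projectional factor}\ref{projectional factor b}, defining $\tilde A = A_1A_2$ and $\tilde P = A_1P_2A_1^{-1}P_1$ and verifying that $\tilde A^{-1}\tilde P = B_2B_1$; your reformulation via left-invertible pairs $BA=\mathrm{Id}_X$ makes this bookkeeping a bit more transparent but is the same computation.
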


\begin{proof}
The first statement is straightforward and thus we only provide a proof of the second one.  Let $Y$ and $Z$  be complemented subspaces of $X$ which are isomorphic to $X$.
Let $P:X\to Y$ and $Q:X\to Z$,  be  the associated projections, and $A:X\to Y$ and $B:X\to Z$ the associated isomorphisms satisfying $\|A\|\|A^{-1}P\| \leq C$, $\|B\|\|B^{-1}Q\|\leq D$.
$\|A^{-1}PT - S\| \leq \e$ and $\|B^{-1}QSB - R\| \leq \de$.

We define $\tilde P = AQA^{-1}P$ and $\tilde A = AB$. Then, $\tilde P$ is a projection onto $\tilde A[X]$ and $\|\tilde P\|\|\tilde A^{-1}P\| \leq CD$. We obtain
\begin{equation*}
\|B^{-1}Q(A^{-1}PTA)B - B^{-1}QSB\| \leq \|B^{-1} Q\|\|B\|\|A^{-1}PTA - S\|\leq D \e
 \end{equation*}
 and thus $\|B^{-1}QA^{-1}PTAB - R\| \leq D\e +\de$.
Finally, observe that $$\tilde A^{-1}\tilde P = B^{-1}A^{-1}AQA^{-1}P = B^{-1}QA^{-1}P$$ and thus $\|\tilde A^{-1}\tilde PT\tilde A - R\|\leq D\e + \de$.
\end{proof}

The following explains the relation between primariness and approximate projectional factors.

\begin{prop}\label{primarity}
Let $X$ be a Banach space that satisfies Pe\l czy\'nsky's accordion property, i.e., for some $1\leq p\leq\infty$ we have that $X\simeq \ell_p(X)$. Assume that there exist $C\geq 1$ and  $0<\e<1/2$ so that every bounded linear operator $T:X\to X$ is a $C$-projectional factor with error $\e$ of a scalar operator, 
\ie, a  scalar multiple of the identity. Then, for every bounded linear operator $T:X\to X$ the identity $2C/(1-2\e)$ factors through either $T$ or $I-T$. In particular, $X$ is primary.
\end{prop}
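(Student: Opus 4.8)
The plan is to start from the hypothesis that every bounded operator $T\colon X\to X$ is a $C$-projectional factor of some scalar operator $\lambda I$ with error $\e<1/2$, and combine this with the accordion property $X\simeq\ell_p(X)$ to produce a factorization of the identity. First I would observe that given $T$, both $T$ and $I-T$ are $C$-projectional factors of scalar operators with error $\e$: say $T$ is a $C$-projectional factor of $\lambda I$, so by the Remark $I-T$ is a $C$-projectional factor of $(1-\lambda)I$, both with error $\e$. Since $\lambda+(1-\lambda)=1$, at least one of $|\lambda|,|1-\lambda|$ is $\geq 1/2$; without loss of generality $|\lambda|\geq 1/2$, so after rescaling it suffices to factor the identity through $T$ using the data $A^{-1}PTA=\lambda I+E$ with $\|E\|\leq\e$ and $\|A\|\|A^{-1}P\|\leq C$. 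Thus $A^{-1}PTA$ is invertible on $X$ (Neumann series: $\|\lambda^{-1}E\|\leq 2\e<1$), with $\|(A^{-1}PTA)^{-1}\|\leq \frac{1}{|\lambda|(1-2\e)}\leq\frac{2}{1-2\e}$, and setting $U=(A^{-1}PTA)^{-1}A^{-1}P$ and $V=A$ we get $UTV=I_X$ with $\|U\|\|V\|\leq \frac{2}{1-2\e}\cdot C=\frac{2C}{1-2\e}$. This already gives that the identity $\frac{2C}{1-2\e}$-factors through either $T$ or $I-T$.

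The second step is to deduce primariness from this factorization property. Suppose $X=E\oplus F$ with projection $T\colon X\to X$ onto $E$ along $F$, so $I-T$ is the projection onto $F$. By the above, the identity on $X$ factors through $T$ (say), i.e.\ there are $U,V\colon X\to X$ with $UTV=I_X$. Since $TV$ maps into $E=T[X]$ and $U$ restricted to $E$ is a left inverse, this exhibits $X$ as isomorphic to a complemented subspace of $E$: indeed $VU|_E$ is a projection on $E$ whose range $VU[E]=V\,U T V[X]=V[X]$ is isomorphic to $X$ via $V$. Hence $X$ embeds complementably into $E$. Symmetrically (or trivially) $E$ is complemented in $X$. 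Now invoke the Pe\l czy\'nski decomposition method: since $X\simeq\ell_p(X)$, $E$ is complemented in $X$, and $X$ is complemented in $E$, one concludes $E\simeq X$. The case where the identity factors through $I-T$ instead gives $F\simeq X$ by the identical argument. In either case one of the two summands is isomorphic to $X$, so $X$ is primary.

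The main obstacle is the Pe\l czy\'nski decomposition step: one must verify carefully that ``$X$ complemented in $E$ and $E$ complemented in $X$, together with $X\simeq\ell_p(X)$'' yields $E\simeq X$. The standard argument writes $X\simeq E\oplus W$ for some $W$, and $E\simeq X\oplus V$ for some $V$; then $E\simeq X\oplus V\simeq \ell_p(X)\oplus V\simeq X\oplus\ell_p(X)\oplus V\simeq X\oplus E\simeq X\oplus (X\oplus W)\simeq \ell_p(X)\oplus W\simeq X\oplus W\simeq X$, using $\ell_p(X)\oplus\ell_p(X)\simeq\ell_p(X)$ and associativity/commutativity of finite direct sums throughout. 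The only subtlety is bookkeeping the isomorphisms so that everything is genuinely bounded and invertible, which is routine once the accordion property is in hand. A secondary minor point is making sure the constant $\frac{2C}{1-2\e}$ is stated for the factorization of $I$ through $T$ or $I-T$ exactly as claimed; this comes directly from the Neumann series bound $\frac{1}{|\lambda|(1-2\e)}\leq\frac{2}{1-2\e}$ once we use $|\lambda|\geq 1/2$, together with $\|A\|\|A^{-1}P\|\leq C$. I would present the factorization conclusion first as the quantitative heart of the proposition, and then append the decomposition-method paragraph to extract primariness.
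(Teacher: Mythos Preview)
Your proof of the quantitative factorization statement is correct and essentially identical to the paper's: both invert $A^{-1}PTA$ via a Neumann series once $|\lambda|\geq 1/2$, obtaining $\|(A^{-1}PTA)^{-1}\|\leq 2/(1-2\e)$, and both pass to $I-T$ when $|\lambda|<1/2$.

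In the primariness paragraph there are two slips in the details. First, $VU|_E$ need not map $E$ into itself, since $V[X]=A[X]=Y$ is not assumed to lie in $E$; so it is not a projection on $E$, and your chain $VU[E]=VUTV[X]$ already fails because $E=T[X]$ is in general strictly larger than $TV[X]=T[Y]$. The correct projection onto the copy $TV[X]\subset E$ of $X$ is $TVU$, since $(TVU)^2=TV(UTV)U=TVU$ and $TVU$ fixes $TV[X]$ pointwise. (The paper writes this projection as $(S|_W)^{-1}S$ with $W=QA[X]$, which unwinds to the same operator.) Second, in your Pe\l czy\'nski chain the step $X\oplus E\simeq X\oplus(X\oplus W)$ is unjustified: from $X\simeq E\oplus W$ one gets $X\oplus E\simeq E\oplus W\oplus E$, not $X\oplus X\oplus W$. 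A correct route is to first show $X\simeq E\oplus X$ via the accordion, namely
\[
X\simeq\ell_p(X)\simeq\ell_p(E\oplus W)\simeq\ell_p(E)\oplus\ell_p(W)\simeq E\oplus\ell_p(E)\oplus\ell_p(W)\simeq E\oplus\ell_p(X)\simeq E\oplus X,
\]
and then conclude $X\simeq E\oplus X\simeq(X\oplus V)\oplus X\simeq X\oplus V\simeq E$, using $X\oplus X\simeq X$. These are easily repaired details; the overall strategy matches the paper, which simply cites Pe\l czy\'nski's argument at this point.
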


\begin{proof}
Let $Y$ be a subspace of $X$ that is isomorphic to $X$ and complemented in $X$, with associated projection and isomorphism $P,A:X\to Y$,
 so that $\|A^{-1}P\|\|A\|\leq C$ and so that there exits a scalar $\la$ with $\|(A^{-1}P)TA - \la I\| \leq \e$. If $|\la| \geq 1/2$ then
\[\big\|\underbrace{\la^{-1}A^{-1}PTA}_{=:B} - I\big\| \leq 2\e <1\]
and thus $B^{-1}$ exists with  $\|B^{-1}\| \leq 1/(1-2\e)$. We obtain that if $S = B^{-1}\la^{-1}A^{-1}P$ then $STA = I$ and $\|S\|\|A\| \leq 2C/(1-2\e)$. If, on the other hand $|\la| <1/2$ then, because $\|A^{-1}P(I-T)A - (1-\la)I\|\leq\e$, we achieve the same conclusion for $I-T$ instead of $T$.

If $X = Y\oplus Z$ and $Q:X\to Y$ is a projection then we deduce that either $Y$ or $Z$ contains a complemented subspace isomorphic to $X$.  To see that we can assume
 that for some scalar $\lambda$, with $|\lambda|\ge 1/2$, $Q$ is a $C$-projectional factor with error $\e\in(0,1/2)$ of $\lambda I$. Otherwise  we replace $Q$ by $I-Q$.
 From what we proved so far we deduce that there  
 are operators  $S,A :X\to X$ so that $SQA = I$. 
 Then $W = QA(X)$ is a subspace of $Y$ that is isomorphic to $X$. It is also complemented via the projection $R = (S|_W)^{-1}S:X\to W$. 
So we obtain that $Y$ is a complemented subspace of $X$ and $X$ is isomorphic to complemented subspace of $Y$. Since in addition  $X$ satisfies 
 the accordion property it follows  from Pe\l czy\'nsky's famous classical argument from \cite{pelczynski:1960}  that $X\simeq Y$. 
 Similarly, if $(I-Q)$ is a factor of the identity  we deduce $X\simeq Z$.
\end{proof}

\subsection{The Haar system in $L_1$}
We denote by $L_1$ the space of all (equivalence classes of) integrable scalar functions $f$ with domain $[0,1)$ endowed with the norm $\|f\|_1 = \int _0^1|f(s)| ds$. We will denote the Lebesgue measure of a measurable subset $A$ of $[0,1)$ by $|A|$.

We denote by $\mathcal{D}$ the collection of all dyadic intervals in $[0,1)$, namely
\begin{equation*}
  \mathcal{D} = \Big\{\Big[\frac{i-1}{2^j},\frac{i}{2^j}\Big): j\in\mathbb{N}\cup\{0\}, 1\leq
  i\leq 2^j\Big\}.
\end{equation*}
We define the bijective function
$\iota: \mathcal{D}\to \{2,3,\ldots\}$ by
\begin{equation*}
  \Big[\frac{i-1}{2^j},\frac{i}{2^j}\Big)
  \stackrel{\iota}{\mapsto} 2^j + i .
\end{equation*}
The function $\iota $ defines a linear order on $\mathcal{D}$.  We recall the definition of the Haar system
$(h_I)_{I\in\mathcal{D}}$. For $I = [(i-1)/2^j,i/2^j)\in \mathcal{D}$ we define $I^+, I^-\in\mathcal{D}$ as follows:
$I^+ = [(i-1)/2^j,(2i-1)/2^{j+1})$, $I^- = [(2i-1)/2^{j+1},i/2^j)$, and
\begin{equation*}
  h_I
  = \chi_{I^+} - \chi_{I^-}.
\end{equation*}
We additionally define $h_\emptyset = \chi_{[0,1)}$ and
$\mathcal{D}^+ = \mathcal{D}\cup\{\emptyset\}$. We also define $\iota(\emptyset) = 1$. Then, $(h_I)_{I\in\mathcal{D}^+}$ is a
monotone Schauder basis of $L_1$, with the linear order induced by $\iota$. Henceforth, whenever we write $\sum_{I\in\mathcal{D}^+}$ we will always mean that the sum is taken with this linear order $\iota$.

For each $n\in\mathbb{N}\cup\{0\}$ we define
\[\mathcal{D}_n = \{I\in\mathcal{D}: |I| = 2^{-n}\}\text{ and }\mathcal{D}^n = \{\emptyset\}\cup(\cup_{k=0}^n\mathcal{D}_k).\]

An important realization, that will be used multiple times in the sequel is the following. Let $I\in\mathcal{D}$.  Then there exists a unique $k_0 \in \N$ and a  unique decreasing sequence of 
intervals  $(I_k)_{k=0}^{k_0}$ in  $(\mathcal{D}^+)$, so that $I_0 = \emptyset$, $I_1 = [0,1)$, and $I_{k_0}=I$, and for $k=1,2, \ldots, k_{0-1}$, $I_{k+1}=I^+_k$, or $I_{k+1}=I^-_k$.
In other words  $(I_k)_{k=1}^{k_0}$ consists of all elements of $\mathcal D^+$ which contain $I$, decreasingly ordered.
  For $k=1,2, \ldots, k_0-1$   put $\theta_k = 1$, if $I_{k+1} = I_k^+$ and $\theta_k = -1$ if $I_{k+1} = I_k^-$. We then have the following formula, already discovered by Haar,
\begin{equation}
\label{normalized intervals}
|I|^{-1}\chi_I  = |I_{k_0}|^{-1}\chi_{I_{k_0}} = h_{I_0} + \sum_{k=1}^{k_0-1}\theta_k|I_k|^{-1}h_{I_k}.
\end{equation}
Note that in the above representation, if we define $I_{k_0} = I$, then $I_k = I_{k-1}^-$ or $I_k = I_{k-1}^+$ for $k=2,\ldots,k_0$. To simplify notation, we will henceforth make the convention $\theta_0 = 1$ and $|I_0|^{-1} = |\emptyset|^{-1}  = 1$ to be able to write
\begin{equation}
\label{normalized intervals simple}
|I_{k_0}|^{-1}\chi_{I_{k_0}} = \sum_{k=0}^{k_0-1}\theta_k|I_k|^{-1}h_{I_k}.
\end{equation}
This representation will be used multiple times in this paper.

A relevant definition is that of $[\mathcal{D}^+]$, the collection of all sequences $(I_k)_{k=0}^\infty$ in $\mathcal{D}^+$ so that $I_0 = \emptyset$, $I_1 = [0,1)$, and for each $k\in\N$, $I_{k+1} = I_k^+$ or $I_{k+1} = I_k^-$.
Note that for $(I_k)_{k=0}^\infty\in[\mathcal{D}^+]$ and $k\in\N$, $I_k\in\mathcal{D}_{k-1}$. 
 Each $(I_k)_{k=0}^\infty$ defines a sequence $(\theta_k)_{k=1}^\infty$ as described in the paragraph above. 
This yields a bijection between $[\mathcal{D}^+]$ and $\{-1,1\}^\N$. This fact will be used more than once. On $\{-1,1\}^\N$ we will consider the product of the uniform distribution on $\{-1,1\}$, which
via this bijection generates a probability  on $[\mathcal{D}^+]$, which we will also denote by $|\cdot|$. Also, we consider on $[\cD^+] $ the  image topology
 of the product of the discrete topology on $\{-1,1\}$ via that bijection.

\subsection{Haar multipliers on $L_1$}
A Haar multiplier is a linear map $D$, defined on the linear span of the Haar system, for which every Haar vector $h_I$ is an eigenvector with eigenvalue $a_I$. We denote the space of bounded Haar multipliers $D:L_1\to L_1$ by $\mathcal{L}_{HM}(L_1)$. In this subsection we recall a formula for the norm of a Haar multiplier that was observed by Semenov and Uksusov in \cite{semenov:uksusov:2012}. We then use Haar multipliers to sketch a proof of the fact that every bounded linear operator on $L_1$ is an approximate 1-projectional factor of a scalar operator.

\begin{prop}
\label{branches variation}
Let $(I_k)_{k=0}^\infty\in[\mathcal{D}^+]$ associated to $(\theta_k)_{k=1}^\infty\in\{-1,1\}^\N$. For $k\in\N$ define $B_k =  I_k\setminus I_{k+1}$
and let $(a_k)_{k=0}^n$ be a sequence of scalars.

Then we have
\begin{align}
\label{branches variation 1}
                      &\left\|\sum_{k=0}^na_k\theta_k|I_k|^{-1} h_{I_k}\right\|_{L_1}  \leq \sum_{k=1}^n|a_k - a_{k-1}| + |a_n|.\\
  \intertext{and for any        $1\le m< n$}             
\label{branches variation 1a}
                      & \left\|\Big(\sum_{k=0}^na_k\theta_k|I_k|^{-1} h_{I_k}\Big)\big|_{\bigcup_{j=m}^n B_j}\right\|_{L_1}\ge \frac{1}{3}\Big(\sum_{k=m+1}^n|a_k - a_{k-1}|+|a_n|\Big) .
\end{align}
\end{prop}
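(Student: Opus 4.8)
\textbf{Proof plan for Proposition \ref{branches variation}.}
The key observation is that the partial sums $s_j := \sum_{k=0}^{j} a_k \theta_k |I_k|^{-1} h_{I_k}$ have a very explicit description: by the Haar formula \eqref{normalized intervals simple}, the function $\theta_k|I_k|^{-1}h_{I_k}$ is supported on $I_k$, and telescoping gives that $s_j$ is \emph{constant} on each of the disjoint ``leaf'' blocks $B_k = I_k\setminus I_{k+1}$ for $k\le j$, and constant on the remaining interval $I_{j+1}$. More precisely, one checks by induction on $j$ that on $B_k$ (for $0\le k\le j$) the function $s_j$ equals $a_k$, while on $I_{j+1}$ it equals $a_j$. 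This is the standard ``martingale/branch'' picture: moving from $s_{j-1}$ to $s_j$ adds $a_j\theta_j|I_j|^{-1}h_{I_j}$, which on $I_j^+$ contributes $+a_j|I_j|^{-1}$ times the length-$\tfrac12|I_j|$ piece and on $I_j^-$ contributes $-a_j|I_j|^{-1}$; combined with the value $a_{j-1}$ already present on $I_j$, exactly one of the two halves becomes the new leaf $B_j$ with value $a_j$ — wait, one must be careful: the value on the new leaf $B_j$ is $a_{j-1}\pm a_j$ depending on signs, not $a_j$. So instead I would track the two relevant quantities directly: let $c_j$ denote the (constant) value of $s_j$ on the ``surviving'' interval $I_{j+1}$. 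Then $c_j = c_{j-1} + \theta_j a_j |I_j|^{-1}\cdot(\pm 1)$, and since $I_{j+1}$ is the half of $I_j$ on which $h_{I_j}=+1$ precisely when $\theta_j=1$ (by the definition of $\theta_j$), one gets the clean recursion $c_j = c_{j-1} + a_j|I_j|^{-1}$; meanwhile on the leaf $B_j$ (the other half) the value is frozen at $c_{j-1} - a_j|I_j|^{-1}$. This bookkeeping is the only slightly delicate point, and it is where I expect to spend the most care.

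For the upper bound \eqref{branches variation 1}: write $f := \sum_{k=0}^n a_k\theta_k|I_k|^{-1}h_{I_k} = s_n$. Using $|B_k| = \tfrac12|I_k|$ and the values computed above,
\[
\|f\|_{L_1} = \sum_{k=0}^{n}|\text{value on }B_k|\cdot|B_k| + |\text{value on }I_{n+1}|\cdot|I_{n+1}|.
\]
Substituting $c_k = \sum_{i=0}^{k} a_i|I_i|^{-1}$ (recalling $|I_0|^{-1}=1$, $\theta_0=1$), the value on $B_k$ is $c_{k-1}-a_k|I_k|^{-1}$ and $|B_k|=\tfrac12|I_k|$, etc. A cleaner route avoiding all constants: observe that the map sending the coefficient sequence $(a_k)$ to $f$ and the map realizing $f$ as a martingale difference sum interact so that $\|f\|_{L_1}$ telescopes. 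Concretely, I would prove \eqref{branches variation 1} by induction on $n$: the passage from $n-1$ to $n$ changes $f$ only on $I_n$, splitting the value $a_{n-1}$-related constant there into two halves differing by $2a_n|I_n|^{-1}$; the $L_1$-norm on $I_n$ changes by at most $|I_n|\cdot|a_n|\,|I_n|^{-1} = |a_n|$ in the relevant normalized bookkeeping, which after summing yields $\sum_{k=1}^n|a_k-a_{k-1}| + |a_n|$ once one matches the final-block term. I would carry this out by writing $f = \big(\sum_{k=0}^{n-1}(a_k-a_n)\theta_k|I_k|^{-1}h_{I_k}\big) + a_n\big(\sum_{k=0}^{n-1}\theta_k|I_k|^{-1}h_{I_k} + \theta_n|I_n|^{-1}h_{I_n}\big)$ and noting the second bracket is $a_n|I_n|^{-1}\chi_{I_n}$ by \eqref{normalized intervals simple}, with $L_1$-norm $|a_n|$; then iterate/Abel-sum on the first bracket. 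This Abel summation (summation by parts) is really the engine of the whole proposition.

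For the lower bound \eqref{branches variation 1a}: restricting $f$ to $U := \bigcup_{j=m}^n B_j$, we keep exactly the leaves $B_m,\dots,B_n$, on which $f$ takes the constant values recorded above, plus we must decide how to handle $I_{n+1}$ — note $I_{n+1}\subset I_n$ but $I_{n+1}\not\subset U$ in general, so $f|_U = \sum_{j=m}^n (\text{value on }B_j)\chi_{B_j}$. Now $\|f|_U\|_{L_1} = \sum_{j=m}^n |v_j|\,|B_j|$ where $v_j$ is the value on $B_j$, and $|B_j|=2^{-1}|I_j|$. The successive values satisfy $v_{j}$ vs.\ $c_{j-1}$ and $c_j$ differences are governed by $a_j|I_j|^{-1}$, so the differences $v_j$ encode the increments $a_j$. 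The factor $1/3$ is the slack one loses in going from a sum $\sum|v_j||B_j|$ of values-on-disjoint-sets back to the variation $\sum_{k=m+1}^n|a_k-a_{k-1}|+|a_n|$: this is a reverse-triangle / Abel-summation estimate of the type $\sum_k|a_k-a_{k-1}| \le 3\cdot(\text{weighted sum of partial-sum-type quantities})$, which is the quantitative content of the Semenov--Uksusov norm formula. I would prove it by expressing each $a_k-a_{k-1}$ in terms of the $v_j$'s and the geometric weights, then bounding via the triangle inequality, the $3$ absorbing the boundary terms and the geometric overlap. The main obstacle, as flagged, is getting the sign/value bookkeeping on the leaves $B_j$ exactly right so that both inequalities come out with the stated constants; everything else is Abel summation and the elementary identity \eqref{normalized intervals simple}.
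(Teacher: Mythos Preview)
Your Abel summation approach to the upper bound \eqref{branches variation 1} is correct and in fact cleaner than the paper's argument: writing $U_j=\sum_{k=0}^j\theta_k|I_k|^{-1}h_{I_k}=|I_{j+1}|^{-1}\chi_{I_{j+1}}$ (by \eqref{normalized intervals simple}) and summing by parts gives $f=a_nU_n+\sum_{k=0}^{n-1}(a_k-a_{k+1})U_k$, and since every $\|U_j\|_{L_1}=1$ the bound drops out. The paper instead computes the constant value of $f$ on each $B_k$, sets $X_k=(\text{value on }B_k)\cdot|B_k|$, derives the recursion
\[
X_k=\tfrac12 X_{k-1}+\tfrac12(b_{k-1}-b_k),
\]
and telescopes $\sum|X_k|=\sum(2|X_k|-|X_{k-1}|)\le\sum|2X_k-X_{k-1}|=\sum|b_k-b_{k-1}|$. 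Your route avoids the recursion entirely for the upper bound.

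The gap is in the lower bound \eqref{branches variation 1a}. Your plan there (``express each $a_k-a_{k-1}$ in terms of the $v_j$'s\dots\ the $3$ absorbing the boundary terms'') is not yet an argument: you have not identified \emph{how} the differences $a_k-a_{k-1}$ are recovered from the leaf data, and without that there is no mechanism producing the constant $1/3$. The missing ingredient is exactly the recursion above, equivalently the identity $a_{k-1}-a_k=2X_k-X_{k-1}$. Once you have it, the lower bound is immediate:
\[
\sum_{k\ge m+1}|a_{k-1}-a_k|\le\sum_{k\ge m+1}\bigl(2|X_k|+|X_{k-1}|\bigr)\le 3\sum_{k\ge m}|X_k|=3\,\|f|_{\bigcup_{j\ge m}B_j}\|_{L_1},
\]
with the $|a_n|$ term appearing because $b_{n+1}=0$. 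So to complete your plan you must actually carry out the bookkeeping you flagged as delicate and extract this two-term linear relation between the $X_k$ and the increments; a vague appeal to Abel summation does not produce it, and your upper-bound Abel decomposition does not help here since the $U_j$ are not disjointly supported.
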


\begin{proof} Note that the sequence  $(B_k)_{k=1}^\infty $ is a partition of $[0,1)$ and for $k\in\N$
  $B_k$  is the set in $[0,1]$ of measure $2^{-k}$, on which $\theta_k h_{I_k}$ takes the value $-1$. 
  Let $f = a_0 h_\emptyset + \sum_{k=1}^n \theta_ka_k|I_k|^{-1}h_{I_k}$. For $k\in\N$ put $b_k = a_k$ if $k\leq n$ and $b_k = 0$ otherwise. For each $k\in\N$ the function $f$ is constant on $B_k$ and in fact for $s\in B_k$ we have
\begin{equation*}
f(s) =b_0 + \sum_{j=1}^{k-1}|I_j|^{-1}b_j - |I_k|^{-1}b_k = b_0 + \sum_{j=1}^{k-1}2^{j-1}b_j - 2^{k-1}b_k =: c_k.
\end{equation*}
Therefore, for any $m=1,2,\ldots n$
\begin{equation}
\begin{split}
\left\|f \chi_{\bigcup_{j=m}^n B_j} \right\|_{L_1} = \sum_{k=m}^\infty |X_k|\label{branches variation 2}
\end{split}
\end{equation}
where for each $k\in\N$,
\[X_k = \frac{c_k}{2^{k}} = \frac{1}{2^{k}}b_{0} + \sum_{j=1}^{k-1}\frac{2^{j-1}}{2^{k}}b_j - \frac{1}{2}b_k.\]
Putting $X_0 = 0$, a calculation yields that for all $k\in\N$
\begin{equation}
\label{branches variation 3}
X_k = \frac{1}{2}X_{k-1} + \frac{1}{2}\left(b_{k-1} - b_k\right). 
\end{equation}
Applying the triangle inequality to \eqref{branches variation 2} and \eqref{branches variation 3} we conclude

\begin{align*}
\| f\|_{L_1}&=\sum_{k=m}^\infty |X_k|=\sum_{k=1}^\infty 2|X_k|-|X_{k-1}|\\
                 &\le \sum_{k=1}^\infty |2X_k -X_{k-1}|=\sum_{k=1}^\infty|b_k-b_{k-1}|
\end{align*}
which yields \eqref{branches variation 1}. In order to obtain \eqref{branches variation 1a}, we deduce from \eqref{branches variation 3}
\begin{align*}
&\sum_{k=m+1}^\infty |X_k|
                                                           \ge\frac12 \sum_{k=m+1}^\infty |b_k-b_{k-1}|-\frac12\sum_{k=m} |X_k| 
                                                             \intertext{and therefore}
          & \frac32   \sum_{k=m+1}^\infty |X_k| +\frac12 |X_{m}|      \ge  \frac12 \sum_{k=m+1}^\infty |b_k-b_{k-1}|                                        
   \intertext{which yields}
  & \| f \chi_{\bigcup_{j=m}^n B_j}\|_{L_1}  =      \sum_{k=m}^\infty |X_k|  \ge     \sum_{k=m+1}^\infty |X_k| +\frac13|X_m|       \ge \frac13  \sum_{k=m+1}^\infty |b_k-b_{k-1}|                                                                                                  
\end{align*}
and proves \eqref{branches variation 1a}.
\end{proof}

\begin{thm}[Semenov-Uksusov, \cite{semenov:uksusov:2012}]
\label{diagonal formula}
Let $(a_I)_{I\in\mathcal{D}^+}$ be a collection of scalars, and $D$ be the associated Haar multiplier. Define
\begin{equation}
\label{diagonal triple norm}
\trn{D} = \sup\Big(\sum_{k=1}^\infty\big |a_{I_k} - a_{I_{k-1}}\big|+\lim_k\big|a_{I_k}\big|\Big)
\end{equation}
where the supremum is taken over all $(I_k)_{k=0}^\infty\in[\mathcal{D}^+]$. Then, $D$ is bounded (and thus extends to a bounded linear operator on $L_1(X)$) if and only if $\trn{D}<\infty$. More precisely,
\begin{equation}
\label{diagonal equivalent norm}
\|D\| \leq \trn{D}\leq 3\|D\|.
\end{equation}
\end{thm}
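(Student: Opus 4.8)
The plan is to prove the two inequalities in \eqref{diagonal equivalent norm} independently. For $\|D\|\le\trn{D}$ I would decompose an arbitrary, finitely Haar‑supported $f\in L_1$ into multiples of $L_1$‑normalized indicators of dyadic intervals, on each of which \eqref{branches variation 1} bounds $D$ by a branch variation, hence by $\trn{D}$; density then finishes that direction, and the very same estimate yields the boundedness of $D$ on $L_1(X)$ with the same bound. For $\trn{D}\le 3\|D\|$ I would test $D$ on a single $L_1$‑normalized indicator $|I_{n+1}|^{-1}\chi_{I_{n+1}}$ and read off the full branch variation from the lower $L_1$‑estimate established inside the proof of Proposition~\ref{branches variation}.

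\emph{Upper bound.} Assume $\trn{D}<\infty$. Let $f\in L_1$ have finite Haar expansion and choose $N$ so large that $f$ is constant on each $B\in\mathcal{D}_N$; then $f=\sum_{B\in\mathcal{D}_N}f|_B\,\chi_B$ and $\|f\|_1=\sum_B|f|_B|\,|B|$. For $B\in\mathcal{D}_N$ set $g_B=|B|^{-1}\chi_B$, so $\|g_B\|_1=1$; writing the chain of $B$ as $(I_k)_{k=0}^{k_0}$ with $I_{k_0}=B$, formula \eqref{normalized intervals simple} gives $g_B=\sum_{k=0}^{k_0-1}\theta_k|I_k|^{-1}h_{I_k}$, hence $Dg_B=\sum_{k=0}^{k_0-1}a_{I_k}\theta_k|I_k|^{-1}h_{I_k}$. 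By \eqref{branches variation 1}, $\|Dg_B\|_1\le\sum_{k=1}^{k_0-1}|a_{I_k}-a_{I_{k-1}}|+|a_{I_{k_0-1}}|$; extending $(I_k)_{k=0}^{k_0}$ to an element of $[\mathcal{D}^+]$ and using $\sum_{k\ge k_0}|a_{I_k}-a_{I_{k-1}}|\ge|a_{I_{k_0-1}}-\lim_j a_{I_j}|$ shows this quantity is $\le\trn{D}$. Therefore $\|Df\|_1\le\sum_B|f|_B|\,|B|\,\|Dg_B\|_1\le\trn{D}\,\|f\|_1$, and by density $\|D\|\le\trn{D}$. For the vector‑valued assertion no new idea is needed: the decomposition is linear over $X$, $D(c_B\chi_B)=|B|\,c_B\,(Dg_B)$ for $c_B\in X$, so $\|Df\|_{L_1(X)}\le\sum_B\|c_B\|_X|B|\,\|Dg_B\|_1\le\trn{D}\,\|f\|_{L_1(X)}$.

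\emph{Lower bound.} Assume $D$ is bounded. Fix $(I_k)_{k=0}^\infty\in[\mathcal{D}^+]$ and $n\in\N$. The function $g=|I_{n+1}|^{-1}\chi_{I_{n+1}}=\sum_{k=0}^n\theta_k|I_k|^{-1}h_{I_k}$ has $\|g\|_1=1$ and $Dg=\sum_{k=0}^n a_{I_k}\theta_k|I_k|^{-1}h_{I_k}$. Running the computation from the proof of Proposition~\ref{branches variation} (the identity $\|Dg\|_1=\sum_{k\ge1}|X_k|$, the recursion $X_k=\tfrac12X_{k-1}+\tfrac12(b_{k-1}-b_k)$, and $X_0=0$) yields the matching lower bound $\|Dg\|_1\ge\tfrac13\big(\sum_{k=1}^n|a_{I_k}-a_{I_{k-1}}|+|a_{I_n}|\big)$. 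Hence $\sum_{k=1}^n|a_{I_k}-a_{I_{k-1}}|+|a_{I_n}|\le 3\|D\|$ for every $n$; in particular the series converges, $\lim_k a_{I_k}$ exists, and letting $n\to\infty$ gives $\sum_{k=1}^\infty|a_{I_k}-a_{I_{k-1}}|+\lim_k|a_{I_k}|\le 3\|D\|$. Taking the supremum over $[\mathcal{D}^+]$ gives $\trn{D}\le 3\|D\|$. The same test functions show that $\trn{D}=\infty$ forces $D$ unbounded, which together with the upper bound completes the equivalence.

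\emph{Main obstacle.} The one delicate point is in the lower bound: to get the constant $3$ rather than a larger one, one must control the \emph{full} branch variation starting at $k=1$ — in particular the term $|a_{[0,1)}-a_\emptyset|$ — which is not literally what \eqref{branches variation 1a} with $m=1$ delivers, but is obtained by retaining the $k=1$ term in its proof (precisely where $X_0=0$ is used, so that the whole sum, not just a tail, is bounded by $3\sum_{k\ge1}|X_k|$). All the remaining steps are routine triangle‑inequality bookkeeping.
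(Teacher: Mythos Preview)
Your proposal is correct and follows essentially the same route as the paper's proof: both directions test $D$ on $L_1$-normalized dyadic indicators $|I|^{-1}\chi_I$, apply the branch-variation estimates of Proposition~\ref{branches variation}, and then pass to the supremum/limit. Your explicit decomposition of a finitely supported $f$ into multiples of such indicators is exactly the concrete unpacking of the paper's one-line observation that the closed convex symmetric hull of $\{|I|^{-1}\chi_I\}$ is the unit ball of $L_1$; and your remark about recovering the $k=1$ term via $X_0=0$ is precisely the point the paper leaves implicit when it cites only \eqref{branches variation 1} for the two-sided inequality \eqref{diagonal formula eq1}.
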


\begin{proof}
By \eqref{normalized intervals}, $D$ is  always well defined on the linear span of the set $\mathcal{X} = \{|I|^{-1}\chi_I:I\in\mathcal{D}\}$. In fact, the closed convex symmetric hull of $\mathcal{X}$ is the unit ball of $L_1$. We deduce that $\|D\| = \sup\{\|Df\|:f\in\mathcal{X}\}$, under the convention that $\|D\| = \infty$ if and only if $D$ is unbounded. Fix $f = |I|^{-1}\chi_I\in\mathcal{X}$. Use \eqref{normalized intervals} to write
\[f = |I_{k_0}|^{-1}\chi_{I_{k_0}} = \sum_{k=0}^{k_0-1}\theta_k|I_k|^{-1}h_{I_k},\text{ \ie, }Df =  \sum_{k=0}^{k_0-1}a_k\theta_k|I_k|^{-1}h_{I_k}.\]
Extend $(I_k)_{k=0}^{k_0}$ to a branch $(I_k)_{k=0}^\infty$. By \eqref{branches variation 1} we have
\begin{equation}
\label{diagonal formula eq1}
\frac{1}{3}\big(\sum_{k=1}^{k_0-1}|a_{I_k} - a_{I_{k-1}}| + |a_{I_{k_0-1}}|\big) \leq \|Df\|_{L_1} \leq \sum_{k=1}^{k_0-1}|a_{I_k} - a_{I_{k-1}}| + |a_{I_{k_0-1}}|.
\end{equation}
By the triangle inequality, $\|Df\|_{L_1} \leq \sum_{k=1}^{\infty}|a_{I_k} - a_{I_{k-1}}| + \lim_k|a_{I_{k}}| \leq \trn{D}$. The lower bound is achieved by taking in \eqref{diagonal formula eq1} all $f\in\mathcal{X}$.
\end{proof}

The following special type of Haar multiplier will appear in the sequel.
\begin{exa}
Let $\mathscr{A}\subset[\mathcal{D}^+]$ be a non-empty set and define the set $\mathcal{A} = \cup_{k_0=0}^\infty\{I_{k_0}:(I_k)_{k=0}^\infty\in\mathscr{A}\}\subset\mathcal{D}^+$. Let $P_\mathscr{A}$ denote the Haar multiplier that has entries $a_I = 1$ for $I\in\mathcal{A}$ and $a_I = 0$ otherwise. Then, 
 by Theorem \ref{diagonal formula},
$\|P_\mathscr{A}\|\leq \trn{P_\mathscr{A}} = 1$ and therefore $P_\mathscr{A}$ defines a norm-one projection onto $Y_\mathscr{A} = \overline{\langle\{h_I:I\in\mathcal{A}\}\rangle}$.
\end{exa}

The following elementary remark will be useful eventually.
\begin{rem}
\label{restricted triple bar}
Let $\mathscr{A}$ be a non-empty closed subset of $[\mathcal{D}^+]$ and $\mathcal{A} = \cup_{k_0=0}^\infty\{I_{k_0}:(I_k)_{k=0}^\infty\in\mathscr{A}\}$. Let $D$ be a Haar multiplier with entries that are zero outside $\mathcal{A}$. Then, $\trn{D} = \sup_{(I_k)_{k=0}^\infty\in\mathscr{A}}(\sum_{k=1}^\infty|a_{I_k}-a_{k-1}| + \lim_k|a_{I_k}|)$.
\end{rem}

Haar multipliers provide a short path to a proof of the fact that every operator on $L_1$ is an approximate 1-projectional factor of a scalar operator, which in turn yields
Enflo's theorem~\cite{EM} that  $L_1$ is primary.

\begin{thm}
\label{icebreaker}
The following are true in the space $L_1$.
\begin{enumerate}[leftmargin=20pt,label=(\roman*)]

\item Let $D:L_1\to L_1$ be a bounded Haar multiplier. For every $\e>0$, $D$ is a 1-projectional factor with error $\e$ of a scalar operator.

\item Let $T:L_1\to L_1$ be a bounded linear operator. For every $\e>0$, $T$ is a 1-projectional factor with error $\e$ of a bounded Haar multiplier $D:L_1\to L_1$.

\end{enumerate}
In particular, for every $\e>0$, every bounded linear operator $T:L_1\to L_1$ is a 1-projectional factor with error $\e$ of a scalar operator.
\end{thm}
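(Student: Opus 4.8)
The plan is to establish (i) and (ii) separately and then combine them by the transitivity of approximate projectional factors (Proposition~\ref{approximate factor transitive}), splitting the error $\e$ as $\e/2+\e/2$. One should note at the outset that, since the constant is required to be $1$, the witnessing isomorphism $A$ in Definition~\ref{projectional factor} is forced to be an isometry onto its range and the projection $P$ forced to have norm one; hence each copy of $L_1$ we build must be isometric to $L_1$ and $1$-complemented. For (i) we use $Y=L_1(I')$, the functions supported on a dyadic interval $I'$, complemented by multiplication by $\chi_{I'}$; for (ii) we use $Y=L_1(\mathcal F)$ for a non-atomic proper sub-$\sigma$-algebra $\mathcal F$, complemented by the conditional expectation $E_{\mathcal F}$.

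\emph{Part (i).} Write $(a_I)_{I\in\mathcal D^+}$ for the eigenvalues of $D$; since $\trn D<\infty$, along every branch $(I_k)\in[\mathcal D^+]$ the sequence $(a_{I_k})$ converges and has total variation $\le\trn D$. For each $\de>0$ there is a dyadic interval $I^\#$ below which the eigenvalues of $D$ vary by less than $\de$ along every branch: otherwise, starting at $[0,1)$ one could pass to a chain $[0,1)\supset I^{(1)}\supset I^{(2)}\supset\cdots$ along which the variation of $(a_I)$ from $I^{(n)}$ down to $I^{(n+1)}$ is at least $\de$, and concatenating would give a branch of infinite variation. Fix such $I^\#$, set $\la=a_{I^\#}$, so $|a_J-\la|<\de$ for all $J\subseteq I^\#$. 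A direct computation with the Haar identity~\eqref{normalized intervals} gives, for $I'\subseteq I^\#$, that $\chi_{I'}\cdot D\chi_{I'}=\mu'\,\chi_{I'}$ with $\mu'=\sum_{I_k\supsetneq I'}a_{I_k}\,|I'|/|I_k|$ a weighted average of the eigenvalues over the proper ancestors of $I'$, the weights concentrating on the ancestors nearest $I'$; hence choosing $I'\subseteq I^\#$ with $|I'|/|I^\#|$ small enough forces $|\mu'-\la|<2\de$. Now let $A\colon L_1\to L_1(I')$ be the rescaling isometry, $P$ multiplication by $\chi_{I'}$, so $\|A\|\,\|A^{-1}P\|=1$ and $A^{-1}PA=\mathrm{Id}$. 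For $g\in L_1$ write $Ag=c\chi_{I'}+\varphi$ with $c$ the mean of $Ag$ over $I'$ and $\varphi\in\overline{\langle h_J:J\subseteq I'\rangle}$; then $\chi_{I'}D(c\chi_{I'})=c\mu'\chi_{I'}$, while $\|D\varphi-\la\varphi\|_1\le\trn{(D-\la\,\mathrm{Id})|_{\{J\subseteq I'\}}}\,\|\varphi\|_1=O(\de)\|\varphi\|_1$ by Theorem~\ref{diagonal formula} and Remark~\ref{restricted triple bar}. Since $|c|\,|I'|\le\|g\|_1$ and $\|\varphi\|_1\le 2\|g\|_1$, this yields $\|A^{-1}PDA-\la\,\mathrm{Id}\|=O(\de)$, and (i) follows by taking $\de$ small.

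\emph{Part (ii).} Identify $[0,1)$ measurably with a product $\Omega\times\Omega'$ of non-atomic probability spaces. Build inductively a ``generalized Haar system'' $(\tilde h_I)_{I\in\mathcal D^+}$ in $L_1$ with $\tilde h_\emptyset=\chi_{[0,1)}$, $\tilde h_I=\chi_{A_I^+}-\chi_{A_I^-}$, $A_I=A_I^+\sqcup A_I^-$, $A_{I^\pm}=A_I^\pm$, $|A_I^\pm|=\tfrac12|A_I|$, each $A_I$ of the form $S_I\times\Omega'$ with $S_I$ a finite union of dyadic pieces of $\Omega$, and $\|\tilde h_I\|_1=\|h_I\|_1$ for all $I$; then $\mathcal F:=\sigma(\tilde h_I:I\in\mathcal D^+)$ is non-atomic (the masses $|A_I|$ halve at each level) and proper ($\mathcal F\subseteq\sigma(\Omega)$). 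Fix in advance a summable array, e.g. $\e_{I,J}=\tfrac\e2\,|A_I|\,2^{-\iota(I)-\iota(J)}$ for $I\ne J$, and arrange that $|\langle T\tilde h_I,\tilde h_J\rangle|<\e_{I,J}$ for all $I\ne J$. This is done level by level along $\iota$: when $\tilde h_I$ is to be chosen, the admissible splits of $S_I$ into equal, spread-out halves form a net that tends to $0$ both weakly in $L_1$ and $w^*$ in $L_\infty$ (Rademacher-type functions in the $\Omega$-variable), so — using that $T$ is weak-to-weak and $T^*$ is $w^*$-to-$w^*$ continuous and that each $\tilde h_J\in L_1\cap L_\infty$ — the finitely many pairings $\langle T\tilde h_I,\tilde h_J\rangle$ and $\langle T\tilde h_J,\tilde h_I\rangle$ against already-chosen $\tilde h_J$ can all be driven below the prescribed bounds. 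Now put $a_I=\langle T\tilde h_I,\tilde h_I\rangle/\|\tilde h_I\|_2^2=\langle T\tilde h_I,\tilde h_I\rangle/|A_I|$, let $D$ be the Haar multiplier with these eigenvalues on $L_1(\mathcal F)$ (in the basis $(\tilde h_I)$), and let $A\colon L_1\to L_1(\mathcal F)$ be the isometry sending $h_I$ to $\tilde h_I$ — it exists because $(\tilde h_I)$ carries the same mass structure as $(h_I)$ and the $L_1$-norm of a Haar-type sum over a tree depends only on that structure. On the linear span of $(\tilde h_I)$ the operator $E_{\mathcal F}T-D$ sends $\sum_I\gamma_I\tilde h_I$ to $\sum_J\beta_J\tilde h_J$ with $\beta_J=|A_J|^{-1}\sum_{I\ne J}\gamma_I\langle T\tilde h_I,\tilde h_J\rangle$; using $|\gamma_I|=|\langle g,\tilde h_I\rangle|/|A_I|\le\|g\|_1/|A_I|$, one gets $\sum_J|\beta_J|\,|A_J|\le\|g\|_1\sum_{I\ne J}\e_{I,J}/|A_I|=\tfrac\e2\|g\|_1$, so $\|E_{\mathcal F}TE_{\mathcal F}-D\|_{L_1(\mathcal F)\to L_1(\mathcal F)}<\e$ by the triangle inequality alone; in particular $D$ is bounded, $\trn D<\infty$, and $D':=A^{-1}DA$ is a bounded Haar multiplier on $L_1$. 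With $P=E_{\mathcal F}$ this gives $\|A^{-1}PTA-D'\|<\e$, which is (ii).

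Combining, by (ii) $T$ is a $1$-projectional factor with error $\e/2$ of a bounded Haar multiplier, which by (i) is a $1$-projectional factor with error $\e/2$ of a scalar operator, so Proposition~\ref{approximate factor transitive} gives that $T$ is a $1$-projectional factor with error $\e$ of a scalar operator. The genuinely delicate step is the inductive construction in (ii): one must arrange simultaneously that the admissible continuations form a weakly null net at every stage, that the limiting $\sigma$-algebra $\mathcal F$ is non-atomic and proper, and that the prescribed decay $(\e_{I,J})$ is fast enough — and it is exactly this super-fast, tree-summable decay of the off-diagonal entries that lets a crude triangle inequality take the place of the unconditionality argument available in $L_p$ for $p>1$ but not in $L_1$. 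Part (i), by contrast, is essentially just the variation iteration together with the elementary averaging identity for $\chi_{I'}$.
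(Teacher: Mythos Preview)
Your proof is correct and follows the same strategy as the paper's sketch: in (i) you locate a dyadic interval below which the multiplier's coefficients have small variation (hence is close to scalar) and restrict there, and in (ii) you inductively build a block Haar system along which the off-diagonal entries of $T$ are summably small, so that the diagonal part is a bounded Haar multiplier. The implementation differs only cosmetically --- in (i) you restrict to $L_1(I')$ via $\chi_{I'}$ and must therefore handle the constant component separately through the weighted-average $\mu'$, whereas the paper works in the range of $Q_{I_0}$ (the Haar-multiplier projection onto $\langle h_J:J\subseteq I_0\rangle$) and avoids this; in (ii) your product structure $\Omega\times\Omega'$ and the requirement that $\mathcal F$ be proper are not needed, since any faithful Haar system with $\tilde h_\emptyset=h_\emptyset$ already has its natural projection equal to a conditional expectation.
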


We wish to provide a sketch of the proof of the above. Firstly, we will use it at the end of the paper and secondly it provides an introduction to the basis of the methods used in the paper. Now, and numerous times in the sequel, we require the following notation and definition.

\begin{ntt}
For every disjoint collection $\De$ of $\mathcal{D}^+$ and $\theta\in\{-1,1\}^\De$ we denote $h_\De^\theta = \sum_{J\in\De}\theta_Jh_J$. If $\theta_J = 1$ for all $J\in\De$ we write  $h_\De = \sum_{j\in\De}h_J$. For a finite disjoint collection $\De$ of $\mathcal{D}$ we denote $\De^* = \cup\{I:I\in\De\}$.
\end{ntt}

\begin{dfn}\label{faithful Haar system}
A {\em faithful Haar system} is a collection $(\tilde h_I)_{I\in\mathcal{D}^+}$ so that for each $I\in\mathcal{D}^+$ the function $\tilde h_I$ is of the form $\tilde h_I = h_{\De_I}^{\theta_I}$,
 for some finite disjoint collection $\De_I$ of $\mathcal{D}$,
 and so that
\begin{enumerate}[leftmargin=23pt,label=(\roman*)]

\item $\De_\emptyset^* = \De_{[0,1)}^* = [0,1)$ and for each $I\in\mathcal{D}$ we have $|\De_I| = |I|$,

\item for every $I\in\mathcal{D}$ we have that $\De_{I^+}^* = [\tilde h_\emptyset\tilde h_I = 1]$ and $\De_{I^-}^* = [\tilde h_\emptyset\tilde h_I = -1]$.

\end{enumerate}
\end{dfn}

\begin{rem}
It is immediate that $(\tilde h_\emptyset \tilde h_I)_{I\in\mathcal{D}^+}$ is distributionally equivalent to $(h_I)_{I\in\mathcal{D}^+}$. Therefore, $(\tilde h_I)_{I\in\mathcal{D}^+}$ is isometrically equivalent to $(h_I)_{I\in\mathcal{D}^+}$, both in $L_1$ and in $L_\infty$. In particular,
\[Pf = \sum_{I\in\mathcal{D}^+}\langle \tilde h_I, f\rangle |I|^{-1}\tilde h_I\]
defines a norm-one projection onto a subspace $Z$ of $L_1$ that is isometrically isomorphic to $L_1$. Note that, unless $h_\emptyset = 1$, $P$ is not a conditional expectation as $P\chi_{[0,1)} = 0$. Instead, it is of the form $Pf = \tilde h_\emptyset E(\tilde h_\emptyset f|\Sigma)$,  where $\Sigma = \sigma(\tilde h_\emptyset \tilde h_I)_{I\in\mathcal{D}^+}$. Since $\tilde h_\emptyset$ is not $\Sigma$-measurable it cannot be eliminated. The advantage of the notion of a faithful Haar system is that one can be constructed in every tail of the Haar system. The drawback is that it causes a slight notational burden when having to adjust for the initial function $\tilde h_\emptyset$ in several situations.

We will several times recursively construct faithful Haar systems $(\tilde h_I)_{I\in\mathcal{D}^+}$, which means that we first choose $\tilde h_\emptyset$, secondly $\tilde h_{[0,1)}$,
and then $\tilde h_I$, $I\in \cD$,  assuming that $\tilde h_J$ was chosen for all $J\in\cD ^+$  with $\iota(J) < \iota(I)$.
\end{rem}

\begin{proof}[Proof of Theorem \ref{icebreaker}]
Let us sketch the proof of the first statement. Let $(a_I)_{I\in\mathcal{D}^+}$ be the entries of $D$. For every $I\in\mathcal{D}$ denote by $Q_I$ the Haar multiplier that has entries 1 for all $J\subset I$ and zero all others. Then, $\trn{Q_I} = 1$.
 First note that, for every $\e>0$, there exits $I_0\in\mathcal{D}^+$ so that $\trn{DQ_{I_0} - a_{I_0}Q_{I_0}}\leq \e$. Otherwise, we could  easily deduce $\trn{D} = \infty$. Construct a  dilated  and renormalized faithful Haar system $(\tilde h_I)_{I\in\mathcal{D}^+}$ with closed linear span $Z$ in the range of $Q_{I_0}$ and let $P:L_1\to Z$ be the corresponding norm-one projection and $A:L_1\to Z$ be an onto isometry. Then, $\|A^{-1}PDA - a_{I_0}I\| \leq \e$.

For the second part we will use that the Rademacher sequence $(r_n)_n$ (\ie, $r_n = \sum_{L\in\mathcal{D}_n}h_L$, for  $n\in\N$) ) is weakly null in $L_1$ and $w^*$-null in $(L_1)^* \equiv L_\infty$. Using this fact, we inductively construct a faithful Haar system $(\tilde h_I)_{I\in\mathcal{D}^+}$ so that for each $I\neq J$ we have
\[\big|\big\langle\tilde h_I,T\big(|J|^{-1}\tilde h_J\big)\big\rangle\big|\leq \e_{(I,J)},\]
where $(\e_{(I,J)})_{(I,J)\in\mathcal{D}^+}$ is a pre-chosen collection of positive real numbers with $\sum\e_{(I,J)} \leq \e$. This is done as follows. If we have chosen $\tilde h_I$ for $\iota(I) = 1,\ldots,k-1$. Let $I\in\mathcal{D}^+$ with $\iota(I) = k$ and let $I_0$ be the predecessor of $I$, i.e., either $I = I_0^+$ or $I = I_0^-$. Let us assume $I = I_0^+$. We then choose the next function $\tilde h_I$ among the terms of a Rademacher sequence with support $[\tilde h_\emptyset\tilde h_{I_0} = 1]$. Denote by $Z$ the closed linear span of $(\tilde h_I)_{I\in\mathcal{D}^+}$ and take the canonical projection $P:L_1\to Z$ as well as the onto isometry $A:L_1\to Z$ given by $A h_I = \tilde h_I$. Consider the operator $S=A^{-1}PTA:L_1\to L_1$ and note that for all $I\neq J$ we have $|\langle h_I, S(|J|^{-1}h_J) \rangle| = |\langle\tilde h_I, T(|J|^{-1}\tilde h_J )\rangle| \leq \e_{(I,J)}$. It follows that the entries $a_I = \langle h_I, S(|I|^{-1}h_I )\rangle$ define a bounded Haar multiplier $D$ and $\|S - D\| \leq \e$, i.e., $T$ is a 1-projectional factor with error $\e$ of $D$.
\end{proof}

\subsection{ 
Haar system spaces}
We define 
Haar system spaces. These are Banach spaces of scalar function generated by the Haar system in which two functions with the same distribution have the same norm. This abstraction does not impose any notational burden to the proof of the main result. The only difference to the case $X= L_p$ is the normalization of the Haar basis. Properties such as unconditionality of  the Haar system or reflexivity of $L_p$ are never deployed.

\begin{dfn}
\label{RI def}

A {\em Haar system space} $X$ is the completion of  $Z = \langle\{ h_L:L\in\mathcal{D}^+\}\rangle = \langle\{\chi_I:I\in\mathcal{D}\}\rangle$ under a norm $\|\cdot\|$ that satisfies the following properties.
\begin{enumerate}[leftmargin=20pt,label=(\roman*)]

\item\label{RI def 1} If $f$, $g$ are in $Z$ and $|f|$, $|g|$ have the same distribution then $\|f\| = \|g\|$. 

\item\label{RI def 2} $\|\chi_{[0,1)}\| = 1$.

\end{enumerate}
We denote the class of Haar system spaces by $\cH$.

\end{dfn}
Obviously, property \ref{RI def 2} may be achieved by scaling the norm of a space that satisfies \ref{RI def 1}. We include it anyway for notational convenience.

An important class of spaces which satisfy  Definition \ref{RI def}, according to \cite[Proposition 2.c.1]{lindenstrauss:tzafriri:1979:partII}, are separable  rearrangement invariant
function spaces on $[0,1]$. Recall that  a (non-zero) Banach space $Y$ of measurable scalar functions on $[0,1)$ is called {\em rearrangement invariant}
(or as in  \cite{rodin:semyonov:1975} {\em symmetric})  if the following conditions hold true: First,  whenever $f\in Y$ and $g$ is a measurable function with $|g|\leq |f|$ a.e.  then $g\in Y$ and $\|g\|_Y \leq \|f\|_Y$. Second,  if $u,v$ are in $Y$ and they have the same distribution then $\|u\|_Y = \|v\|_Y$.

The following properties of  a Haar  system space $X$ follow from elementary arguments. For completeness, we provide the proofs.

\begin{prop} Let $X$ be a Haar system space.
\label{RI properties}
\begin{enumerate}[leftmargin=21pt,label=(\alph*)]

\item\label{RI properties 1} For every $f\in Z = \langle\{\chi_I:I\in\mathcal{D}\}\rangle$ we have $\|f\|_{L_1}\leq \|f\|\leq \|f\|_{L_\infty}$. Therefore, $X$ can be naturally identified with a space of measurable scalar functions on $[0,1)$ and ${\overline Z^{\|\cdot\|_{L_\infty}}} \subset X \subset{L_1}$.

\item \label{RI properties 2}$Z = \langle\{\chi_I:I\in\mathcal{D}\}\rangle$ naturally coincides with a subspace of $X^*$ and its closure $\overline Z$ in $X^*$ is also  a Haar  system space.

\item \label{RI properties 3} The Haar system, in the usual linear order, is a monotone Schauder basis of $X$.

\item \label{RI properties 4} For a finite union $A$ of elements of $\mathcal{D}$ we put $\mu_A = \|\chi_A\|^{-1}_X$ and $\nu_A = \|\chi_A\|^{-1}_{X^*}$. Then, $\mu_A\nu_A = |A|^{-1}$. In particular, $(\nu_Lh_L,\mu_Lh_L)_{L\in\mathcal{D}^+}$ is a biorthogonal system in $X^*\times X$.

\item  \label{RI properties 5} A faithful Haar system $(\widehat h_L)_{L\in\mathcal{D}^+}$ is isometrically equivalent to $(h_L)_{L\in\mathcal{D}^+}$. In particular, $Pf = \sum_{L\in\mathcal{D}^+}\langle \nu_L\widehat h_L,f \rangle\mu_L\widehat h_L$ defines a norm-one projection onto a subspace of $X$ that is isometrically isomorphic to $X$.

\end{enumerate}
\end{prop}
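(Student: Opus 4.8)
The plan is to base everything on two elementary consequences of property \ref{RI def 1}. The first is \emph{monotonicity}: for $f,g\in Z$ with $|f|\le|g|$ one should have $\|f\|\le\|g\|$. This is deliberately not part of the definition, so it must be recovered, and the key point I would exploit is that for each $n$ the norm restricted to $V_n:=\langle\chi_I:I\in\mathcal{D}_n\rangle$ is invariant under all coordinatewise unimodular substitutions $\chi_I\mapsto\omega_I\chi_I$ ($|\omega_I|=1$), since these leave $|f|$ unchanged. By convexity the closed convex hull of the torus orbit of a vector $v=\sum v_I\chi_I$ is the polydisc $\{\sum w_I\chi_I:|w_I|\le|v_I|\ \forall I\}$, so the unit ball of $V_n$ is solid, and monotonicity follows by scaling. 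The second consequence is that conditional expectation is a contraction on $X$: if $\mathcal{G}$ is an atomic $\sigma$-algebra whose atoms are finite unions of dyadic intervals, then averaging $f\in Z$ over the finite group of measure preserving bijections of $[0,1)$ that permute the dyadic subintervals inside each atom of $\mathcal{G}$ yields exactly $E(f\mid\mathcal{G})$; each summand is a rearrangement of $f$, so $\|E(f\mid\mathcal{G})\|\le\|f\|$, and $E(\cdot\mid\mathcal{G})$ then extends to a norm-one operator on $X$ (and, by the same argument, on $X^*$). Taking $\mathcal{G}$ trivial gives $\|f\|_{L_1}=\|(\int_0^1|f|)\chi_{[0,1)}\|\le\|f\|$.

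Parts \ref{RI properties 1}--\ref{RI properties 3} should then follow quickly. For the estimate $\|f\|\le\|f\|_{L_\infty}$ I would use the convexity trick dual to the one above: if $\|f\|_{L_\infty}\le1$ and $f$ is constant on $\mathcal{D}_n$, its coefficient vector lies in the polydisc, hence is a finite convex combination of unimodular vectors $\omega^{(j)}$, and each $\sum_I\omega^{(j)}_I\chi_I$ has modulus $1$ a.e., so norm $\|\chi_{[0,1)}\|=1$; thus $\|f\|\le1$. This and the lower estimate of the first paragraph make $(Z,\|\cdot\|)\hookrightarrow L_1$ a norm-one embedding. For part \ref{RI properties 3}, the Haar partial-sum projections are conditional expectations onto atomic dyadic $\sigma$-algebras, hence contractions, and the basis property follows from density of $Z$; this basis property in turn forces the embedding $Z\hookrightarrow L_1$ to extend to an \emph{injection} $X\hookrightarrow L_1$, since it preserves all Haar coefficients, completing part \ref{RI properties 1}. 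For part \ref{RI properties 2}, each $\phi\in Z$ defines a bounded functional on $X$ because $|\langle f,\phi\rangle|\le\|\phi\|_{L_\infty}\|f\|_{L_1}\le\|\phi\|_{L_\infty}\|f\|$, and $Z\hookrightarrow X^*$ is injective by testing against indicators. To see $\overline Z^{X^*}$ is a Haar system space I would establish the formula $\|\phi\|_{X^*}=\sup\{\int_0^1 g|\phi|:g\in Z,\ g\ge0,\ \|g\|\le1\}$ — the supremum may be restricted to such $g$ because passing from a competitor $f$ to $|f|$, and multiplying $g$ by the unimodular step function aligning its phase with that of $\phi$, both preserve $\|\cdot\|$ by property \ref{RI def 1}. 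This expression depends only on the distribution of $|\phi|$ (transport $g$ by a measure preserving dyadic bijection), which yields property \ref{RI def 1} for $\overline Z^{X^*}$, and it equals $1$ at $\chi_{[0,1)}$ because $\int_0^1 g=\|g\|_{L_1}\le\|g\|$.

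The substantive part is \ref{RI properties 4}. The bound $\|\chi_A\|_X\|\chi_A\|_{X^*}\ge|A|$ is witnessed by $g=\chi_A/\|\chi_A\|_X$. For the reverse bound, given $f\in Z$ with $\|f\|\le1$ I would replace $f$ by $E(f\mid\{A,A^c\})=\alpha\chi_A+\beta\chi_{A^c}$, which has norm $\le1$ and the same integral over $A$; monotonicity then gives $\|\alpha\chi_A\|_X\le\|\alpha\chi_A+\beta\chi_{A^c}\|_X\le1$, so $|\langle f,\chi_A\rangle|=|\alpha|\,|A|\le|A|/\|\chi_A\|_X$. Hence $\mu_A\nu_A=|A|^{-1}$. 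Biorthogonality of $(\nu_Lh_L,\mu_Lh_L)_{L\in\mathcal{D}^+}$ follows from $\int h_Lh_M=\delta_{L,M}|L|$ together with $\|h_L\|_X=\|\chi_L\|_X$ and $\|h_L\|_{X^*}=\|\chi_L\|_{X^*}$ (property \ref{RI def 1}) and the identity just proved with $A=L$.

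For part \ref{RI properties 5}, the recursive bisection built into Definition \ref{faithful Haar system} shows that for every $n$ the family $(\widehat h_\emptyset\widehat h_L)_{L\in\mathcal{D}^n}$ has the same joint distribution as $(h_L)_{L\in\mathcal{D}^n}$, so these sequences are distributionally equivalent; since $|\widehat h_\emptyset|\equiv1$, $|\sum a_L\widehat h_L|=|\sum a_L\widehat h_\emptyset\widehat h_L|$ is equidistributed with $|\sum a_Lh_L|$, whence $\|\sum a_L\widehat h_L\|=\|\sum a_Lh_L\|$, and likewise in $X^*$. Finally I would realize $P$ as the composition of the $X$-isometry ``multiply by $\widehat h_\emptyset$'' with the contraction $E(\cdot\mid\Sigma)$, $\Sigma=\sigma(\widehat h_\emptyset\widehat h_L:L\in\mathcal{D}^+)$ (an increasing limit of conditional expectations onto atomic dyadic $\sigma$-algebras); its truncations to $\mathcal{D}^n$ coincide with $f\mapsto\sum_{L\in\mathcal{D}^n}\langle\nu_L\widehat h_L,f\rangle\mu_L\widehat h_L$, so $P$ is a norm-one projection onto the isometric copy $\overline{\langle\widehat h_L:L\in\mathcal{D}^+\rangle}$ of $X$. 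I expect the monotonicity lemma of the first paragraph to be the only genuine obstacle; the rest — including the convergence bookkeeping for $E(\cdot\mid\Sigma_n)$ and for the passage from the truncations to $P$ — should be routine given the uniform contraction estimates and the density of $Z$.
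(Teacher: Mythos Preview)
Your proof is correct and rests on the same underlying mechanism as the paper's---rearrangement and sign invariance combined with averaging---but you organize it more systematically by first isolating two lemmas (monotonicity of the norm via the torus/sign orbit; contractivity of dyadic conditional expectations via averaging over permutation groups) and then deriving \ref{RI properties 1}--\ref{RI properties 5} from these. The paper instead argues each item directly: cyclic permutations on $\mathcal{D}_n$ for the $L_1$ lower bound and for \ref{RI properties 4}, the sign-flip $f\pm a_{n+1}h_{n+1}$ for \ref{RI properties 3}, and a one-line appeal to $1$-unconditionality of $(\chi_I:I\in\mathcal{D}_n)$ for the $L_\infty$ upper bound. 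Your packaging makes the real and complex cases uniform and renders \ref{RI properties 3} and \ref{RI properties 4} immediate once the conditional-expectation lemma is in place.

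One small correction in \ref{RI properties 5}: the projection is $P=M_{\widehat h_\emptyset}\circ E(\cdot\mid\Sigma)\circ M_{\widehat h_\emptyset}$, i.e.\ \emph{conjugation} by multiplication with $\widehat h_\emptyset$, not the single composition $M_{\widehat h_\emptyset}\circ E(\cdot\mid\Sigma)$. With only one multiplication the truncations give $\sum_{L\in\mathcal{D}^n}\langle\nu_L\widehat h_\emptyset\widehat h_L,f\rangle\mu_L\widehat h_L$, not the desired $\sum_{L\in\mathcal{D}^n}\langle\nu_L\widehat h_L,f\rangle\mu_L\widehat h_L$. This does not affect your norm bound, since the missing factor $M_{\widehat h_\emptyset}$ is again an isometry; the paper's Remark after Definition~\ref{faithful Haar system} records exactly the conjugation formula $Pf=\widehat h_\emptyset\,E(\widehat h_\emptyset f\mid\Sigma)$.
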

\begin{proof}\! By the first  condition in Definition~\ref{RI def},  we have 
$$\big\|\sum_{I\in \cD_n} \!a_I\chi_{\pi(I)} \big\|\!=\!\big\|\sum_{I\in \cD_n} \!a_I\chi_{I} \big\| ,$$
for all $n\in\N$, all permutations $\pi$ on $\cD_n$, and all scalar families $(a_I:I\in \cD_n)$.
 
To show the first inequality  in  \ref{RI properties 1} let  $n\in\N$, $f=\sum_{I\in \cD_n} a_I \chi_I\in Z$ and let $\pi:\cD_n\to \cD_n $,
be cyclic (\ie, $\{\pi^r(I):r=1,2\ldots,2^n\}=\cD_n$ for $I\in \cD_n$).
 Then 
 $$\|f\|=\big\| |f|\big\| \ge \frac1{2^n} \Big\|\sum_{r=1}^{2^n}  \sum_{I\in \cD_n} |a_I| \chi_{\pi^r(I)}\Big\|=\frac1{2^n}\Big|\sum_{I\in \cD_n} |a_I|\Big|=\|f\|_{L_1}.$$
    The second inequality in  \ref{RI properties 1} follows from the observation that for each $n\in\N$ the family $(\chi_I: I \in \cD_n)$ is $1$-unconditional.
    
    We identify each $g\in Z$ with  the bounded  functional $x^*_g$, defined by $x^*_g(f)=\int_0^1 f g$, and we denote the dual norm by $\|\cdot\|_{*}$. From this representation it is clear that $\|\cdot\|_*$ also 
    satisfies the first  condition in Definition~\ref{RI def}.
    Since $\|1_{[0,1)]}\|=1$ and since for all $f\in Z$, $ \int f \le \|f\|_1\le \|f\|$, we deduce that the second condition in Definition~\ref{RI def} holds true  for  the norm $\|\cdot\|_*$. 
    
   Let $(h_n)$ be the Haar basis linearly  ordered   in the usual way, meaning that if $m<n$, then either $\supp(h_n)\subset \supp(h_m)$ or 
    $\supp(h_n)\cap \supp(h_m)=\emptyset$. The claim of condition \ref{RI properties 3}
    follows from the fact that if $f=\sum_{j=1}^n a_j h_j\in Z$, then for any scalar $a_{n+1}$ the absolute values of the functions 
   $f+a_{n+1} h_{n+1}$ and $f-a_{n+1} h_{n+1}$ have the same distribution and their average is $f$.
   
   Let $n\!\in\!\N$ and $I\!\in\!\cD_n$ using for $k\!>\!n$ cyclic permutations on $\{ J\!\in\!\cD_k, J\!\subset\!I\}$ we deduce that 
   $\sup_{f\in Z,\|f\|\le 1} \int_I f$ is attained   for $f=\chi_I/\|\chi_I\|$ and thus 
   $\|\chi_I\|\cdot\|\chi_I\|_*=2^{-n}$. Since secondly, for each $n$, $(\chi_I: I \in \cD_n)$ is an orthogonal family, we deduce \ref{RI properties 4}.
   
   Since faithful Haar systems  have the same joint  distribution  we deduce the first part of \ref{RI properties 5}. Since by   \ref{RI properties 2}, this is also true 
   with respect to the dual norm we deduce the second part of \ref{RI properties 5}.
\end{proof}

In different parts of proof we will require additional properties of Haar system spaces.
The following class of Haar  system spaces is the one for which we prove our main theorem.
\begin{dfn}
\label{nice RI class}
 $\cH^*$  is the class of all Banach spaces $X$ in $\cH$ satisfying
\begin{enumerate}[leftmargin=22pt,label=($\star$)]
\item\label{nice RI class 1} the Rademacher sequence $(r_n)_n$ is not equivalent  to the $\ell_1$-unit vector basis.
\end{enumerate}
 $\cH^{**}$  is the class of all Banach spaces $X$ in $\cH$ satisfying
\begin{enumerate}[leftmargin=22pt,label=($\star\star$)]\label{nice RI class,2}
\item\label{nice RI class 2} no subsequence of the $X$-normalized Haar system  $(\mu_Lh_L)_{L\in\mathcal{D}^+}$ is equivalent  to the $\ell_1$-unit vector basis.
\end{enumerate}
\end{dfn}
\begin{rem}   Examples of Haar system spaces which satisfy \ref{nice RI class 1} and \ref{nice RI class 2} are separable reflexive r.i. spaces.
 
 We note and will use several times that  \ref{nice RI class 1}  for Haar system spaces, is equivalent with the condition that  the Rademacher sequence $(r_n)$
 is weakly null. To see this, first note
   that for any $(a_n)\in c_{00}$, any  $\sigma=(\sigma_n)\subset\{\pm1\}$, and  
permutation $\pi$ on $\N$
the distribution
of $\sum_{n\in\N} a_n\sigma_n r_{\pi(n)}$, does not depend on $\sigma$ on $\pi$.  It follows that $(r_n)$ is a symmetric basic  sequence in $X$.
This implies that either $r_n$ is equivalent to the $\ell_1$ unit vector basis or it is weakly null in $X$. Indeed,
if it is not equivalent to the unit vector basis of $\ell_1$, and by symmetry no subsequence,  is equivalent to the  $\ell_1$ unit vector basis,  it must
by Rosenthal's $\ell_1$ Theorem  have  weakly Cauchy subsequence and thus 
for  some subsequence $(n_k)\subset\N$ the sequence  $(r_{n_{2k}}- r_{n_{2k-1}}:k\in\N)$  is weakly null. But then also the sequence
 $(r_{n_{2k}}+ r_{n_{2k-1}}:k\in\N)$  is weakly null, and thus $r_{n_{2k}}$ is weakly null and by symmetry $(r_n)$ is weakly null.

\end{rem}

\subsection{Complemented subspaces of ${L_1(X)}$ isomorphic to $L_1(X)$}

Let $E$, $F$ be Banach spaces. The projective tensor product of $E$ and $F$ is the completion of the algebraic tensor product  $E\otimes F$ under the norm
\begin{equation}
\label{projective tensor norm}
\|u\| = \inf\Big\{\sum_{n=1}^N\|e_n\|\|f_n\|: u = \sum_{n=1}^Ne_n\otimes f_n\Big\}.
\end{equation}
It is well known and follows from the definition of Bochner-Lebesque spaces that for any Banach space $X$, $L_1\otimes_\pi X\equiv L_1(X)$ via the identification $(f\otimes x)(s) = f(s)x$. Then,  $L_\infty(X^*)$ canonically embeds into $(L_1(X))^*$ via the identification $\langle u,v\rangle = \int_0^1 \langle u(s),v(s)\rangle ds$.  Recall that 
by the  definition of tensor norms   the projective tensor norm  satisfies the following  property we will use.
\begin{enumerate}[leftmargin=18pt,label=(\textbullet)]

\item\label{standard fare tensor a} For any pair of bounded linear operators $T:E\to E$ and $S:F\to F$ there exists a unique bounded linear operator $T\otimes S:E\otimes_\pi F\to E\otimes_\pi F$ with $(T\otimes S)(e\otimes f) = (Te)\otimes(Sf)$ and $\|T\otimes S\| = \|T\|\|S\|$.

\end{enumerate}

The next standard statement explains one of the main features of the projective tensor product. For the sake of completeness, and because it is essential in this paper, we include the proof.

\begin{prop}
\label{L_1(X) complemented in L_1(X)}
Let $Z$ be a subspace of $L_1$ that is isometrically isomorphic to $L_1$ via $A:L_1\to Z$ and 1-complemented in $L_1$ via $P:L_1\to Z$. Let $X$ be a Banach space and let $W$ be a subspace of $X$ that is isometrically isomorphic to $X$ via $B:X\to W$ and $1$-complemented in $X$ via $Q:X\to W$. 

Then the  space
$Z(W) =\overline{Z\otimes W}^{L_1(X)}$  coincides with  $Z\otimes_\pi W$ and 
is isometrically isomorphic to $L_1(X)$ via $A\otimes B:L_1(X)\to Z(W)$ and 1-complemented in $L_1(X)$ via $P\otimes Q:L_1(X)\to Z(W)$.
\end{prop}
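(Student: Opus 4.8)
The plan is to prove the three assertions — that $Z(W) = Z\otimes_\pi W$, that $A\otimes B$ is an onto isometry $L_1(X)\to Z(W)$, and that $P\otimes Q$ is a norm-one projection onto $Z(W)$ — by transporting everything back to the model space $L_1(X) = L_1\otimes_\pi X$ via the tensor-operator machinery in item \ref{standard fare tensor a}. First I would record the two basic inputs: $A\otimes B\colon L_1\otimes_\pi X\to Z\otimes_\pi W$ is well defined with $\|A\otimes B\| = \|A\|\|B\| = 1$, and similarly $A^{-1}\otimes B^{-1}\colon Z\otimes_\pi W\to L_1\otimes_\pi X$ has norm $1$; since these are mutually inverse on the algebraic tensor product and both norm-one, $A\otimes B$ is a surjective linear isometry from $L_1\otimes_\pi X$ (i.e. $L_1(X)$) onto the completion $Z\otimes_\pi W$. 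The only genuine content here is the identification $Z\otimes_\pi W = \overline{Z\otimes W}^{L_1(X)} = Z(W)$, i.e. that the projective tensor norm on $Z\otimes W$ coincides with the norm induced from $L_1(X)$; this is where I expect the main (mild) obstacle to lie, and I address it below.

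For the identification $Z(W) = Z\otimes_\pi W$, one inequality is trivial: the inclusion $Z\otimes W\hookrightarrow L_1\otimes_\pi X = L_1(X)$ is norm-nonincreasing for the projective norm on the source (any representation $u = \sum e_n\otimes f_n$ with $e_n\in Z$, $f_n\in W$ is in particular a representation in $L_1\otimes X$), so $\|u\|_{L_1(X)}\le \|u\|_{Z\otimes_\pi W}$. For the reverse inequality I would use the projections: given $u\in Z\otimes W\subset L_1(X)$ and \emph{any} representation $u = \sum_{n=1}^N g_n\otimes x_n$ with $g_n\in L_1$, $x_n\in X$, apply $P\otimes Q$ to get $u = (P\otimes Q)u = \sum_n (Pg_n)\otimes(Qx_n)$, a representation inside $Z\otimes W$, whence $\|u\|_{Z\otimes_\pi W}\le \sum_n\|Pg_n\|\|Qx_n\|\le \sum_n\|g_n\|\|x_n\|$ since $\|P\| = \|Q\| = 1$; taking the infimum over representations of $u$ in $L_1\otimes X$ gives $\|u\|_{Z\otimes_\pi W}\le \|u\|_{L_1(X)}$. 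Hence the two norms agree on $Z\otimes W$, and passing to completions identifies $Z\otimes_\pi W$ with the closure $\overline{Z\otimes W}^{L_1(X)} = Z(W)$. (This argument in fact shows that whenever $P$, $Q$ are norm-one projections, $P\otimes Q$ restricted to $Z\otimes W$ is an isometry for the respective projective norms.)

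Finally, for the projection: by \ref{standard fare tensor a}, $P\otimes Q\colon L_1\otimes_\pi X\to L_1\otimes_\pi X$ is a bounded operator with $\|P\otimes Q\| = \|P\|\|Q\| = 1$. Since $P^2 = P$ and $Q^2 = Q$, on the algebraic tensor product $(P\otimes Q)^2 = P^2\otimes Q^2 = P\otimes Q$, and by density and continuity $P\otimes Q$ is idempotent on all of $L_1(X)$. Its range is the closure of $(P\otimes Q)(L_1\otimes X) = PL_1\otimes QX = Z\otimes W$, which by the previous paragraph is exactly $Z(W)$. Thus $P\otimes Q$ is a norm-one projection of $L_1(X)$ onto $Z(W)$, and combined with the isometry $A\otimes B$ this shows $Z(W)$ is isometrically isomorphic to $L_1(X)$ and $1$-complemented in it, completing the proof. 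I do not anticipate any serious difficulty beyond being careful that all the tensor-operator identities, which hold on the algebraic tensor product by construction, extend to the completions by the norm bounds from \ref{standard fare tensor a}.
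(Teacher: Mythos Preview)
Your proof is correct and follows essentially the same approach as the paper: the key step in both is to take an arbitrary representation $u = \sum g_n\otimes x_n$ of an element of $Z\otimes W$ inside $L_1\otimes X$ and apply $P\otimes Q$ to produce a representation inside $Z\otimes W$ with $\sum\|Pg_n\|\|Qx_n\|\le\sum\|g_n\|\|x_n\|$, yielding the nontrivial norm inequality. The only organizational difference is that you separate the argument into two pieces (first $A\otimes B$ is an isometry onto $Z\otimes_\pi W$ via the norm-one inverse $A^{-1}\otimes B^{-1}$, then $Z\otimes_\pi W = Z(W)$), whereas the paper bundles these together by showing directly that $\|(A\otimes B)u\|_{L_1(X)}\ge\|u\|$; note also that your invocation of $A^{-1}\otimes B^{-1}$ tacitly uses the functoriality of $\otimes_\pi$ for maps between different spaces, which is standard but slightly more general than the endomorphism version recorded in the paper's bullet \ref{standard fare tensor a}.
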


\begin{proof}
It is immediate that $P\otimes Q$ is a norm-one projection onto $Z(W)$ and that $A\otimes B$ is a norm-one map with dense image. It also follows that $A\otimes B$ is 1-1 on $L_1\otimes X$. One way to see this is to identify $L_1\otimes X$ and $Z\otimes W$ with spaces of bilinear forms on $(L_1)^*\times X^*$ and $Z^*\times W^*$ respectively. To conclude that $A\otimes B$ is an isometry and that $Z(W)=Z\otimes_{\pi} W$ take $u$ in $L_1\otimes X$. Note that  $v:=(A\otimes B)(u)$ is in $Z\otimes W\subset L_1\otimes X$ and write $v = \sum_{i=1}^nf_i\otimes x_i$, where $f_1,\ldots,f_n\in L_1$ and $x_1,\ldots,x_n\in X$. We will see that $\sum_{i=1}^n\|f_i\|\|x_i\| \geq \|u\|$, which will imply the conclusion, by the definition of $\|v\|$. Indeed, $v = (P\otimes Q)(v) = \sum_{i=1}^n(Pf_i)\otimes(Qx_i)$ and
\begin{align*}
\|v\| &\geq \sum_{i=1}^n\|Pf_i\|\|Qx_i\| = \sum_{i=1}^n\|A^{-1}Pf_i\|\|B^{-1}Qx_i\|\\
& \geq \big\|\underbrace{\sum_{i=1}^n\big(A^{-1}Pf_i\big)\otimes\big(B^{-1}Qx_i\big)}_{=:y}\big\|.
\end{align*}
It is immediate that $(A\otimes B)(y) = v$ and thus $y = u$.
\end{proof}

The following standard example will be used often to define projectional factors of an operator $T:L_1(X)\to L_1(X)$.
\begin{exa}
\label{standard projectioning}
Let $(\widetilde h_I)_{I\in\mathcal{D}^+}$, $(\widehat h_L)_{L\in\mathcal{D}^+}$ be a faithful Haar systems and let $X$ be  a Haar  system space. Take
\[Z = \overline{\langle \widetilde h_I:I\in\mathcal{D}^+\rangle}\subset L_1\text{ and }W = \overline{\langle \widehat h_I:I\in\mathcal{D}^+\rangle}\subset X.\]
Then the map $P:L_1(X)\to L_1(X)$ given by
\[P u = \sum_{I\in\mathcal{D}^+}\sum_{L\in\mathcal{D}^+}\big\langle \widetilde h_I\otimes \nu_L\widehat h_L,u \big\rangle |I|^{-1}\widetilde h_I\otimes \mu_L\widehat h_L\]
(recall that  $\mu_I=\|\chi_I\|^{-1}_X$ and $\nu_L=\|\chi_L\|_{X^*}^{-1}$)
is a norm-one projection onto $Z(X) = \overline{\langle \widetilde h_I\otimes\widehat h_L: I,L\in\mathcal{D}^+ \rangle}$ and the map
\[A:L_1(X)\to L_1(X)\text{ given by }A(h_I\otimes h_L) = \widetilde h_I\otimes \widehat h_L\]
is a linear isometry onto $Z(X)$. Then, any bounded linear operator $T:L_1(X)\to L_1(X)$ is a 1-projectional factor of $S = A^{-1}PTA:L_1(X)\to L_1(X)$, so that for all $I,J,L,M\in\mathcal{D}^+$ we have
\[\big\langle h_I \otimes h_L,S\big(h_J\otimes h_M\big) \big\rangle = \big\langle \widetilde h_I \otimes \widehat h_L,T\big(\widetilde h_J\otimes\widehat h_M\big) \big\rangle.\]
\end{exa}

\begin{prop}
\label{gggame}
Let $\mathscr{A}\subset[\mathcal{D}^+]$ be a subset that has positive measure. Denote by $\mathcal{A} = \cup_{k_0=0}^\infty\{I_{k_0}:(I_k)_{k=0}^\infty\in\mathscr{A}\}$ and $Y_\mathscr{A} = \overline{\langle\{h_I:I\in\mathcal{A}\}\rangle}$. Then, there exits a subspace $Z$ of $Y_\mathscr{A}$ which is isometrically isomorphic to $L_1$ and $1$-complemented in $L_1$.
\end{prop}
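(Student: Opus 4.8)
The plan is to realize $Z$ as the range of a conditional expectation rather than as the span of a faithful Haar system: the latter is in general impossible inside $Y_\mathscr{A}$ (if, for instance, $\mathscr{A}$ is a positive-measure Cantor-type set, then $\mathcal{A}$ contains no finite partition of $[0,1)$ into intervals of small measure, so no function $\widehat h_\emptyset$ of the form required of a faithful Haar system lies in $Y_\mathscr{A}$). Two preliminary facts are needed. \emph{(i)} The collection $\mathcal{A}$ is closed under passing to larger dyadic intervals: if $I\in\mathcal{A}$ and $I\subseteq K\in\mathcal{D}^+$ then $K\in\mathcal{A}$, since a branch of $\mathscr{A}$ through $I$ also passes through $K$; in particular $\emptyset,[0,1)\in\mathcal{A}$. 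Combined with \eqref{normalized intervals simple}, this shows that for every $J\in\mathcal{A}$ the function $|J|^{-1}\chi_J$ is a finite linear combination of Haar functions $h_I$ with $I\in\mathcal{A}$, hence $\chi_J\in Y_\mathscr{A}$. \emph{(ii)} Identifying $[\mathcal{D}^+]$ with $\{-1,1\}^\N$ under the coin-flipping measure $|\cdot|$, the set $[J]\subseteq[\mathcal{D}^+]$ of branches through a dyadic interval $J$ is a cylinder with $|[J]|=|J|$, and by the martingale convergence theorem $|\mathscr{A}\cap[J_m(\omega)]|/|J_m(\omega)|\to1$ for a.e.\ $\omega\in\mathscr{A}$, where $J_m(\omega)$ is the level-$m$ interval on the branch of $\omega$. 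Since $|\mathscr{A}|>0$, it follows that for every $\eta>0$ and $\rho>0$ there is $J\in\mathcal{A}$ with $|J|\le\rho$ and $|\mathscr{A}\cap[J]|>(1-\eta)|J|$; also, any dyadic $J$ with $|\mathscr{A}\cap[J]|>0$ belongs to $\mathcal{A}$.

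Next I would construct recursively finite pairwise-disjoint families $\mathcal{G}_0,\mathcal{G}_1,\dots$ of dyadic intervals contained in $\mathcal{A}$. Fix positive summable sequences $(\eta_k)_{k\ge0}$, $(\de_k)_{k\ge0}$ with $\sum_k(\eta_k+\de_k)<1$, and sizes $\rho_k\downarrow0$. The families will satisfy: each member of $\mathcal{G}_{k+1}$ lies inside a member of $\mathcal{G}_k$; for each $J'\in\mathcal{G}_k$ the union of the members of $\mathcal{G}_{k+1}$ contained in $J'$ has measure $>(1-\eta_k-\de_k)|J'|$; $|J|\le\rho_k$ for $J\in\mathcal{G}_k$, $k\ge1$; and $|\mathscr{A}\cap[J]|>(1-\eta_k)|J|$ for every $J\in\mathcal{G}_k$. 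To begin, use (ii) to pick $I_0\in\mathcal{A}$ with $|\mathscr{A}\cap[I_0]|>(1-\eta_0)|I_0|$ and set $\mathcal{G}_0=\{I_0\}$. Given $\mathcal{G}_k$, for each $J'\in\mathcal{G}_k$ let $\mathcal{C}$ be the family of all dyadic $C\subseteq J'$ with $|C|\le\rho_{k+1}$ and $|\mathscr{A}\cap[C]|>(1-\eta_{k+1})|C|$; by the density statement of (ii) applied inside $[J']$, the maximal elements of $\mathcal{C}$ form a countable pairwise-disjoint family whose union contains a.e.\ point of $\mathscr{A}\cap[J']$, hence has measure $\ge|\mathscr{A}\cap[J']|>(1-\eta_k)|J'|$; choosing finitely many of them with union of measure $>(1-\eta_k-\de_k)|J'|$ defines the members of $\mathcal{G}_{k+1}$ inside $J'$. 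Each such interval lies in $\mathcal{A}$ by (ii).

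Now put $B=\bigcap_{k\ge0}\bigcup\mathcal{G}_k$. Since the passage from $\mathcal{G}_k$ to $\mathcal{G}_{k+1}$ removes at most an $(\eta_k+\de_k)$-fraction of every interval, $|B|\ge\bigl(1-\sum_k(\eta_k+\de_k)\bigr)|I_0|>0$. The partitions $\pi_k=\{J\cap B:J\in\mathcal{G}_k\}$ of $B$ form a refining sequence, and the atoms of $\pi_k$ for $k\ge1$ have measure $\le\rho_k\to0$, so the $\sigma$-algebra $\Sigma=\bigvee_k\sigma(\pi_k)$ is non-atomic. As $(B,\Sigma)$ is countably generated and Lebesgue measure is non-atomic on $\Sigma$, $(B,\Sigma)$ with this measure is isomorphic mod null to $([0,1),\mathcal B,\mathrm{Leb})$; hence $Z:=L_1(B,\Sigma)$, viewed as the space of $\Sigma$-measurable $L_1$-functions vanishing off $B$, is isometrically isomorphic to $L_1$. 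The map $f\mapsto\mathbb E[\,f\chi_B\mid\Sigma\,]$ is a norm-one projection of $L_1$ onto $Z$, being the composition of the norm-one restriction $f\mapsto f\chi_B$ with the conditional-expectation contraction, and equal to the identity on $Z$. Finally $Z\subseteq Y_\mathscr{A}$: indeed $Z$ is the closed linear span of the indicators $\chi_{J\cap B}$ with $J\in\mathcal{G}_k$, and for such $J$ one has $\chi_{J\cap B}=\lim_{m\to\infty}\sum_{J''\in\mathcal{G}_m,\,J''\subseteq J}\chi_{J''}$ in $L_1$ (because $|\bigcup\mathcal{G}_m\setminus B|\to0$), while each summand lies in $Y_\mathscr{A}$ by (i).

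The step requiring the most care is the recursive selection of the $\mathcal{G}_k$: because $\mathscr{A}$ need not be closed, and even when closed need not contain any cylinder, $\mathcal{A}$ admits no small finite front, so one cannot keep the support intact — a little measure must be shed at each stage, and the parameters $\eta_k,\de_k$ have to be chosen summably small so that $B$ retains positive measure while the residual density of $\mathscr{A}$ inside every surviving interval stays close to $1$, which is exactly what makes the next stage possible. The remaining points (the measure isomorphism, contractivity of the conditional expectation, and the inclusion $Z\subseteq Y_\mathscr{A}$) are routine.
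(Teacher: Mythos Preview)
Your argument is correct, but it is considerably more elaborate than necessary, and the paper's proof is much shorter. After passing (by inner regularity) to a closed subset of $\mathscr{A}$ of positive measure, the paper simply sets $A_k=\bigcup\{I:I\in\mathcal{A}\cap\mathcal{D}_k\}$ and $A=\bigcap_kA_k$. Since $\mathcal{A}$ is upward closed (your fact~(i)), the sets $A_k$ decrease and $|A_k|\ge|\mathscr{A}|$, so $|A|>0$; moreover, if $J\notin\mathcal{A}$ then no dyadic subinterval of $J$ lies in $\mathcal{A}$, hence $J\cap A_k=\emptyset$ for large $k$ and $J\cap A=\emptyset$. Thus every $f\in L_1$ supported on $A$ has $\langle h_J,f\rangle=0$ for all $J\notin\mathcal{A}$, so $L_1(A)\subset Y_\mathscr{A}$, and the restriction $f\mapsto f\chi_A$ is the required norm-one projection.

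Your density-point recursion and the auxiliary $\sigma$-algebra $\Sigma$ are not needed: the same ``upward closed'' observation that you use to show $\chi_J\in Y_\mathscr{A}$ already gives $J\cap B=\emptyset$ for every $J\notin\mathcal{A}$ (take $k$ with $\rho_k<|J|$; any $J''\in\mathcal{G}_k$ meeting $J$ would lie inside $J$ and force $J\in\mathcal{A}$). Hence you could have taken $Z=L_1(B)$ with the full Borel $\sigma$-algebra and the plain restriction projection, bypassing the conditional expectation entirely. What the paper's route buys is that one never has to run a Vitali-type selection or keep track of summable losses: the single set $A=\bigcap_kA_k$ already does the job.
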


\begin{proof} By approximating $\mathscr{A}$ in measure by closed sets from the inside, we can assume that $\mathscr{A}$ is closed.
For $k\in\N$ let $A_k = \cup\{I:I\in\mathcal{A}\cap\mathcal{D}_k\}$, and  $\mathscr{A_k} =\{ (I_n) \in [\cD^+]: I_k\in\cD_k, I_k\subset A_k\}$. 
Then it follows that $\mathscr{A}=\bigcap_k \mathscr{A}_k$ and
letting  $A = \cap_k A_k$, we deduce that 
$$|A|= \lim_{k\to\infty}|A_k|= \lim_{k\to\infty} |\mathscr{A}_k |=  |\mathscr{A}.$$ 
But also, for any $J\notin\mathcal{A}$, we have $J\cap A=\emptyset$. It follows that for any  $f\in L_1$ with $f|_{A^c} = 0$ and $J\notin\mathcal{A}$ we have $\langle h_J, f\rangle = 0$ and thus $f\in Y_\mathscr{A}$. In particular, the restriction operator $R_A:L_1\to L_1$ is a 1-projection onto a subspace that is isometrically isomorphic to $L_1$.
\end{proof}

The above proposition leads to the following example, which will be useful in the sequel.
\begin{exa}
\label{substandard projectioning}
Let $\mathscr{A}\subset[\mathcal{D}^+]$ be a subset that has positive measure and let $X$ be a Banach space. Then, there exists a subspace $Z$ of $Y_\mathscr{A}$ that is isometrically isomorphic to $L_1$ via $A:L_1\to Z$ and 1-complemented in $L_1$ via $P:L_1\to Z$. In particular, for any Banach space $X$ the space
\[Z(X) = \overline{Z\otimes X}\subset L_1(X)\]
is isometrically isomorphic to $L_1(X)$ via $A\otimes I:L_1(X)\to Z(X)$ and 1-complemented in $L_1(X)$ via $P\otimes I$.
\end{exa}

\subsection{Decompositions of operators on  $L_1(X)$}
We begin by listing further standard facts about projective tensor products. We then use these facts to associate to each bounded linear operator $T\!:\!L_1(X)\to L_1(X)$ a family of bounded linear operators on $L_1$. In the next section we will study compactness properties of this family. In later sections we use these properties to extract information about projectional factors of the operator $T$.

Let $E$, $F$ be Banach spaces.
\begin{enumerate}[leftmargin=19pt,label=(\alph*)]

\item\label{standard fare tensor b} For every $e_0^*\in E^*$ and $f_0^*\in F^*$ we may define the bounded linear maps
$q_{(e_0^*)}:E\otimes_\pi F\to F$ and $q^{(f_0^*)}:E\otimes_\pi F\to E$ given by $q_{(e_0^*)}(e\otimes f) = e_0^*(e)f$ and  $q^{(f_0^*)}(e\otimes f) = f_0^*(f)e$. Then, $\|q_{(e_0^*)}\| = \|e_0^*\|$ and $\|q^{(f_0^*)}\| = \|f_0^*\|$.

\item\label{standard fare tensor c} For every $e_0\in E$ and $f_0\in F$ we may define the maps
$j_{(e_0)}:F\to E\otimes_\pi F$ and $j^{(f_0)}:E\to E\otimes_\pi F$ given by $j_{(e_0)}f = e_0\otimes f$ and  $j^{(f_0)}e = e\otimes f_0$. Then, $\|j_{(e_0)}\| = \|e_0\|$ and $\|j^{(f_0)}\| = \|f_0\|$.

\item\label{standard fare tensor d} For every bounded linear operator $T:E\otimes_\pi F\to E\otimes_\pi F$, $f_0^*\in F^*$, and $f_0\in F$ the map $T^{(f_0^*,f_0)}:=q^{(f_0^*)}Tj^{(f_0)}:E\to E$ is the unique bounded linear map so that for all $e^*\in E^*$ and $e\in E$ we have $\langle e^*, T^{(f_0^*,f_0)}e\rangle = \langle e^*\otimes f_0^*, T(e\otimes f_0)\rangle$.

\item\label{standard fare tensor e} For every bounded linear operator $T:E\otimes_\pi F\to E\otimes_\pi F$, $e_0^*\in E^*$, and $e_0\in E$ the map $T_{(e_0^*,e_0)}:=q_{(e_0^*)}Tj_{(e_0)}:F\to F$ is the unique bounded linear map so that for all $f^*\in F^*$ and $f\in F$ we have $\langle f^*, T_{(e_0^*,e_0)}f\rangle = \langle e_0^*\otimes f^*, T(e_0\otimes f)\rangle$.

\end{enumerate}

\begin{ntt}
Let $X$ be  a Haar  system space. For $L\in\mathcal{D}^+$ we denote 
\begin{enumerate}[leftmargin=23pt,label=(\roman*)]

\item $q^L = q^{(\nu_Lh_L)}:L_1(X)\to L_1$,

\item $j^L = j^{(\mu_Lh_L)}:L_1\to L_1(X)$, and

\item $P^L = j^Lq^L:L_1(X)\to L_1(X)$.
\end{enumerate}
Note that, for any $k\in\N$, $\|\sum_{\{L:\iota(L)\leq k\}}P^L\| = 1$. This is because this operator coincides with $I\otimes P^{[\iota\leq k]}$, where $P^{[\iota\leq k]}:X\to X$ is the basis projection onto $(\mu_Lh_L)_{\iota(L)\leq k}$ (this is easy to verify on vectors of the form $u =h_I\otimes h_L$ whose linear span is dense in $L_1(X)$). We may therefore state the following.
\end{ntt}

\begin{rem}
Let $X$ be  a Haar  system space.
\begin{enumerate}[leftmargin=20pt,label=(\roman*)]

\item For each $L\in\mathcal{D}^+$, $P^L$ is a projection with image
\[Y^L = \{f\otimes (\mu_Lh_L): f\in L_1\}\]
that is isometrically isomorphic to $L_1$.  

 \item $(Y^L)_{L\in\mathcal{D}^+}$ forms a monotone Schauder decomposition of $L_1(X)$. In particular, for every $u\in L_1(X)$
 \[u = \sum_{L\in\mathcal{D}^+} P^Lu =\sum_{L\in\mathcal{D}^+} (q^Lu)\otimes (\mu_Lh_L).\]
Thus, $u$ admits a unique representation $u = \sum_{L\in\mathcal{D}^+}f_L\otimes(\mu_Lh_L)$.
\end{enumerate}
\end{rem}

\subsection{Operators on $L_1$ associated to an operator on $L_1(X)$} For  a Haar  system space $X$, we represent every bounded linear operator $T:L_1(X)\to L_1(X)$ as a matrix of operators $(T^{(L,M)})_{(L,M)\in\mathcal{D}^+}$, each of which is defined on $L_1$.
\begin{ntt}
Let $X$ be  a Haar  system space and let $T:L_1(X)\to L_1(X)$ be a bounded linear operator. For $L,M\in\mathcal{D}^+$ we denote $T^{(L,M)} = T^{(\nu_Lh_L,\mu_Mh_M)}$
(recall from Proposition \ref{RI properties} that  scalars $\mu_M$ and $\nu_L$ positive, and chosen  so that 
 $\mu_Mh_M$  is normalized in $X^*$ and $\nu_L h_L$ is normalized in $X$), so that for every $u\in L_1(X)$ we have
\begin{align}
\begin{split}
Tu &= \sum_{L\in\mathcal{D}^+}P^LT\Big(\sum_{M\in\mathcal{D}^+}P^Mu\Big) = \sum_{L\in\mathcal{D}^+}\sum_{M\in\mathcal{D}^+}j^LT^{(L,M)}q^Mu\\
&= \sum_{L\in\mathcal{D}^+}\sum_{M\in\mathcal{D}^+}\Big(T^{(L,M)}(q^Mu)\Big)\otimes(\mu_Lh_L).\label{matrix representation}
\end{split}
\end{align}
For $L\in\mathcal{D}^+$ we denote $T^L = T^{(L,L)}$.
\end{ntt}

The following type of operator is essential as it is easier to work with. A big part of the paper is to show that, within the constraints of the problem under consideration, every operator $T:L_1(X)\to L_1(X)$ is a 1-projectional factor with error $\e$ of an $X$-diagonal operator.
\begin{dfn}
\label{X-diagonal def}
Let $X$ be  a Haar  system space. A bounded linear operator $T:L_1(X)\to L_1(X)$ is called {\em $X$-diagonal} if for all $L\neq M\in\mathcal{D}^+$, $T^{(L,M)} = 0$. We then call $(T^L)_{L\in\mathcal{D}^+}$ the entries of $T$.
\end{dfn}

Note that $T$ is $X$-diagonal if and only if for all $f\in L_1$ and $L\in\mathcal{D}^+$ we have $T(f\otimes(\mu_Lh_L)) = (T^Lf)\otimes(\mu_Lh_L)$ if and only if for all $L\in\mathcal{D}^+$ the space $Y^L$ is $T$-invariant.

\begin{rem}
\label{in practice close to X-diagonal}
If $X$ is  a Haar  system space and $T:L_1(X)\to L_1(X)$ is a bounded linear operator so that $\sum_{L\neq M}\|T^{(L,M)}\| = \e <\infty$, then \eqref{matrix representation} yields that there exists an $X$-diagonal operator $\bar{T}:L_1(X)\to L_1(X)$ with entries $(T^L)_{L\in\mathcal{D}^+}$ so that $\|T - \bar{T}\| \leq \e$.
\end{rem}

\section{Compactness properties of families of operators}
In this section we extract compactness properties of families of operators associated to a $T:L_1(X)\to L_1(X)$. These results will be eventually applied to families that resemble ones of the form $(T^{(L,M)})_{(L,M)\in\mathcal{D}^+}$. The achieved compactness will later be used in a regularization process that will allow us to extract ``nicer'' operators that projectionally factor through $T$. We have chosen to present this section in a more abstract setting that permits more elegant statements and proofs.

\subsection{WOT-sequentially compact families} Taking WOT-limits of certain sequences of operators of the form $T^{(x^*,x)}$ is an important component of the proof. This element was already present in the approach of Capon \cite{CAPLL,CAPLX}. 

This essential Lemma due to Rosenthal is necessary in this subsection as well as the next one. A proof can be given, e.g., by induction on $j$ for $\e = 2^{-j}\sup_n\|\xi_n\|_1$.
\begin{lem}(\cite[Lemma 1.1]{rosenthal:1970})
\label{rosenthal lemma}
Let $(\xi_n)_n$ be a bounded sequence of elements of $\ell_1$ and $\e>0$. Then, there exits an infinite set $N = \{n_j:j\in\N\}\in[\N]^\infty$ so that for every $j_0\in\N$ we have $\sum_{j\neq j_0}|\xi_{n_{j_0}}(n_j)| \leq \e$.
\end{lem}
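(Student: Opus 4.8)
\textbf{Plan for proving Rosenthal's Lemma (Lemma \ref{rosenthal lemma}).}
The statement asks, for a bounded sequence $(\xi_n)_n$ in $\ell_1$ and $\e>0$, for an infinite subset $N=\{n_j:j\in\N\}$ so that $\sum_{j\neq j_0}|\xi_{n_{j_0}}(n_j)|\le\e$ for every $j_0$; i.e., after passing to the subsequence, each tail/cross contribution of the $j_0$-th vector to the \emph{other} selected coordinates is tiny. The plan is to follow the hint: prove it by induction on $j$, where at stage $j$ we have already secured $\e=2^{-j}M$ with $M=\sup_n\|\xi_n\|_1$, building an increasing sequence $n_1<n_2<\cdots$ one index at a time together with a decreasing chain of infinite sets $N_1\supseteq N_2\supseteq\cdots$ from which future indices are drawn.

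\textbf{Key steps.} First I would set $M=\sup_n\|\xi_n\|_1<\infty$ and fix, given the target $\e$, an integer $J$ with $2^{-J}M\le\e/2$ (or simply run the argument with summable weights $\e_j$ whose total is $\le\e$); the cleanest bookkeeping is to demand at step $j$ that the newly chosen vector $\xi_{n_j}$ has mass $\le\e_j$ outside a finite ``already committed'' coordinate block, and separately that already-chosen vectors put mass $\le\e_j$ on the new coordinate $n_j$. Concretely: suppose $n_1<\cdots<n_{j-1}$ and an infinite set $N_{j-1}$ have been chosen. Because each $\xi_{n_i}$ ($i<j$) lies in $\ell_1$, its restriction to $N_{j-1}$ has finite total mass, so there is a \emph{cofinite} subset of $N_{j-1}$ on which $\sum_{i<j}|\xi_{n_i}|$ is $\le\e_j$; discard the finitely many bad coordinates. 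Now I must also arrange that $\xi_{n_j}$ itself is small on the \emph{future} coordinates. Here one uses that the sequence $(\xi_n)_n$ is bounded in $\ell_1$: for each fixed finite set $F$, the ``$F$-truncated'' vectors $\xi_n\chi_{F^c}$ still have norm $\le M$, and there are only finitely many coordinates where a given $\xi_n$ is large; a diagonal/pigeonhole extraction along the remaining infinite index set produces an infinite set of candidate indices $n$ such that $\xi_n$ restricted to (the infinite tail we will choose future indices from) has total mass $\le\e_j$ — this is where one genuinely invokes boundedness and passes to a further infinite subsequence. Pick $n_j$ to be any such index larger than $n_{j-1}$, and let $N_j$ be the infinite candidate set just produced, intersected with $\{n>n_j\}$. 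Iterating and summing $\sum_j\e_j\le\e$ gives the conclusion.

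\textbf{Main obstacle.} The routine half is controlling, at stage $j$, the mass that \emph{earlier} vectors place on the \emph{new} coordinate — that is a one-line consequence of $\xi_{n_i}\in\ell_1$ plus deleting finitely many coordinates. The genuinely delicate half is controlling the mass that the \emph{new} vector $\xi_{n_j}$ places on \emph{all future} coordinates simultaneously, since those coordinates are chosen later; this is exactly the point where uniform $\ell_1$-boundedness (not just membership in $\ell_1$ of each vector) is essential, and it forces a nested-subsequence / diagonal extraction rather than a naive greedy choice. Carrying out this extraction carefully, so that the infinite candidate set survives to all later stages, is the crux of the proof; once it is set up, the induction and the final estimate $\sum_{j\neq j_0}|\xi_{n_{j_0}}(n_j)|\le\sum_{j}\e_{\max(j,j_0)}\le\e$ are straightforward.
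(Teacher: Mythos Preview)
There is a genuine gap. Your two controls --- (i) already-chosen vectors $\xi_{n_i}$ ($i<j$) put small mass on the new coordinate $n_j$, and (ii) the new vector $\xi_{n_j}$ puts small mass on the future coordinates --- both bound the \emph{same} half of the target sum, namely $\sum_{j>j_0}|\xi_{n_{j_0}}(n_j)|$: control (i) does this termwise as $j$ ranges over stages beyond $j_0$, while control (ii) does it in one shot at stage $j_0$. Neither bounds the ``past'' half $\sum_{j<j_0}|\xi_{n_{j_0}}(n_j)|$, i.e., the mass that the newly chosen vector places on the \emph{earlier} selected coordinates. Your final estimate $\sum_{j\neq j_0}|\xi_{n_{j_0}}(n_j)|\le\sum_j\e_{\max(j,j_0)}$ silently assumes $|\xi_{n_{j_0}}(n_j)|\le\e_{j_0}$ for each $j<j_0$, but nothing in the construction forces this: if $\xi_n(1)=1$ for all $n$ and you happen to select $n_1=1$, then $|\xi_{n_{j_0}}(n_1)|=1$ for every $j_0>1$, regardless of later refinements. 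If your ``diagonal/pigeonhole extraction'' in step~2 is made strong enough to repair this (say, producing an infinite $N_j$ so that \emph{every} $n\in N_j$ has $\sum_{m\in N_j\setminus\{n\}}|\xi_n(m)|\le\e_j$), it is already the full lemma, so the argument is circular.

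The paper's hint points to a different induction: on the accuracy level, not on the index of the sequence being built. One proves the lemma for $\e=2^{-j}M$ by induction on $j$, where $M=\sup_n\|\xi_n\|_1$; the base case $j=0$ is trivial. The halving step is: given an infinite $N$ with $\sum_{m\in N\setminus\{n\}}|\xi_n(m)|\le C$ for all $n\in N$, partition $N$ into infinitely many infinite pieces $N_1,N_2,\ldots$. If some $N_i$ already achieves $C/2$ we are done; otherwise pick for each $i$ a bad $n^{(i)}\in N_i$ with $\sum_{m\in N_i\setminus\{n^{(i)}\}}|\xi_{n^{(i)}}(m)|>C/2$, and observe that $N'=\{n^{(i)}:i\in\N\}$ itself achieves $C/2$, since $\sum_{k\neq i}|\xi_{n^{(i)}}(n^{(k)})|\le\sum_{m\in N\setminus N_i}|\xi_{n^{(i)}}(m)|\le C-C/2$. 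This partition-and-dichotomy step is the missing idea in your plan.
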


Here, WOT stands for the weak operator topology in $L_1(X)$.

\begin{thm}
\label{relatively compact orbit}
Let $X$ be a Banach space, $T:L_1(X)\to L_1(X)$ be a bounded linear operator, and $A$, $B$ be bounded subsets of $X^*$ and $X$, respectively. Assume that $B$ contains no sequence that is equivalent to the unit vector basis of $\ell_1$. Then, for every $f\in L_1$ the set
\[\big\{T^{(x^*,x)}f: (x^*,x)\in A\times B\big\}\]
is a uniformly integrable (and thus weakly relatively compact) subset of $L_1$. In particular, every sequence in $\{T^{(x^*,x)}: (x^*,x)\in A\times B\}$ has a WOT-convergent subsequence.
\end{thm}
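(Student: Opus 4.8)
The goal is to show that $\{T^{(x^*,x)}f : (x^*,x)\in A\times B\}$ is uniformly integrable, since a bounded uniformly integrable subset of $L_1$ is weakly relatively compact (Dunford--Pettis), and weak relative compactness for a family of operators applied to a fixed $f$, combined with separability and a diagonal argument, yields WOT-convergent subsequences. Fix $f\in L_1$ and suppose, toward a contradiction, that the family is \emph{not} uniformly integrable. Then there exist $\e_0>0$ and sequences $(x_n^*)\subset A$, $(x_n)\subset B$, and measurable sets $E_n\subset[0,1)$ with $|E_n|\to 0$ but $\int_{E_n}|T^{(x_n^*,x_n)}f|\,ds\geq \e_0$ for all $n$.

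The plan is to extract from this failure a sequence in $B$ equivalent to the $\ell_1$-basis, contradicting the hypothesis. First I would use the defining identity from \ref{standard fare tensor d}: for any $v^*\in L_\infty(X^*)\subset (L_1(X))^*$ we have $\langle v^*, (T^{(x_n^*,x_n)}f)\otimes\text{(something)}\rangle$ controlled by $T$ applied to $f\otimes x_n$. More precisely, writing $g_n := T^{(x_n^*,x_n)}f = q^{(x_n^*)}T(f\otimes x_n)\in L_1$, and letting $u_n := T(f\otimes x_n)\in L_1(X)$, we have $\|u_n\|_{L_1(X)}\leq \|T\|\,\|f\|_1\,\sup_n\|x_n\|$, so $(u_n)$ is bounded in $L_1(X)$. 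The functions $g_n$ are obtained by integrating $u_n(\cdot)$ against the fixed bounded functionals $x_n^*$. The non-uniform-integrability of the scalar functions $g_n$ on the shrinking sets $E_n$ must be ``caused'' by the $X$-valued functions $u_n$ concentrating mass on $E_n$ in a way that is detected along the directions $x_n^*$.

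The key step is then a gliding-hump / Rosenthal-type argument. By passing to a subsequence (using that $|E_n|\to 0$) I would arrange the $E_n$ to be nearly disjoint, so that $\sum_n \chi_{E_n}$ is essentially an $\ell_1$-like decomposition. Choose norm-one functionals $\psi_n\in (L_1(X))^*$ supported (as $L_\infty(X^*)$-functions) on $E_n$, of the form $\psi_n = \chi_{E_n}\cdot \tilde x_n^*$ with $\tilde x_n^*$ chosen so that $\langle\psi_n,u_n\rangle = \int_{E_n}|g_n|\,ds\geq\e_0$ (this is where $x_n^*$ enters: one refines $\tilde x_n^*$ to be $x_n^*$ times a sign function matching $g_n$). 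Because the $E_n$ are disjoint, for scalars $(a_n)$ with $\sum|a_n|\leq 1$ one gets, using that $T^*$ is bounded and that $\psi\mapsto\sum a_n\psi_n$ has norm $\leq 1$ in $(L_1(X))^*$, a lower bound of the form $\|\sum a_n (f\otimes x_n)\|_{L_1(X)}\geq \e_0/\|T\|$ from pairing against $\sum a_n\psi_n$, while the upper bound $\|f\|_1\cdot\|\sum a_n x_n\|_X\leq \|f\|_1\sum|a_n|\cdot\sup\|x_n\|$ is automatic. Dividing out $f$ (since $\|f\otimes x\|_{L_1(X)}=\|f\|_1\|x\|_X$) gives $c\sum|a_n|\leq \|\sum a_n x_n\|_X\leq C\sum|a_n|$, i.e. $(x_n)$ is equivalent to the $\ell_1$-basis --- contradicting that $B$ contains no such sequence.

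\textbf{Main obstacle.} The delicate point is the gliding-hump bookkeeping needed to make the $E_n$ genuinely disjoint \emph{and} simultaneously keep a definite mass $\e_0$ on each (shrinking the sets while preserving the integral requires discarding a small tail of each $g_n$, controlled via uniform integrability of the \emph{single} function $f$ pushed through $T$, or rather via a Rosenthal-lemma extraction as in Lemma~\ref{rosenthal lemma}); and correctly identifying the functionals $\psi_n\in(L_1(X))^*$ and verifying $\|\sum a_n\psi_n\|\leq \max_n\|\psi_n\|$ using disjoint supports. Once the disjointification is set up, the passage to an $\ell_1$-sequence in $B$ is routine, and the ``in particular'' clause follows since $L_1$ is weakly compactly generated / the unit ball of $L_\infty$ applied test functionals is enough, so a standard diagonalization over a countable norming set gives the WOT-convergent subsequence.
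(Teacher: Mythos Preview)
Your approach is essentially the same as the paper's: assume non-uniform integrability, pass to disjoint sets, use Rosenthal's Lemma~\ref{rosenthal lemma} to control cross terms, and conclude that $(x_n)$ is equivalent to the $\ell_1$-basis. One correction to your pairing step: you should not test $T(\sum_n a_n f\otimes x_n)$ against $\sum_m a_m\psi_m$ with the \emph{same} coefficients, since that only yields a diagonal $\sum_n a_n^2\,\e_0$; instead, as in the paper, choose unimodular scalars $\theta_n$ so that $\sum_n a_n\theta_n\langle\psi_n,T(f\otimes x_n)\rangle\geq \e_0\sum_n|a_n|$, pair against $\sum_m\theta_m\psi_m$ (whose $L_\infty(X^*)$-norm is $\max_m\|x_m^*\|$ by disjointness), and apply Rosenthal's Lemma to the sequences $\xi_n = (\langle\psi_m,T(f\otimes x_n)\rangle)_m$ --- which you must first check lie in $\ell_1$ uniformly in $n$, exactly via the disjoint-support bound $\|\sum_m\zeta_m\psi_m\|\leq\sup_{x^*\in A}\|x^*\|$ --- to make the off-diagonal sum at most $(\e_0/2)\sum_n|a_n|$.
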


\begin{proof}
The ``in particular'' part follows from the separability of $L_1$ and the fact that the set in question if bounded by $\|T\|\sup_{(x^*,x)\in A\times B}\|x^*\|\|x\|$.

Fix a sequence $(x_n^*,x_n)\in A\times B$. Assume that $(T^{(x_n^*,x_n)}f)_n$ is not uniformly integrable. Then, after passing to a subsequence, there exist $\de>0$ and a sequence of disjoint measurable subsets $(A_n)_n$ of $[0,1)$ so that for all $n\in\N$ we have
\[\de\leq \Big|\int_{A_n}\big(T^{(x_n^*,x_n)}f\big)(s)ds\Big| = \big|\langle \chi_{A_n},T^{(x_n^*,x_n)}f\rangle\big| = \big|\langle \chi_{A_n}\otimes x_n^*, T(f\otimes x_n)\rangle\big|.\]
For every $n\in\N$ define the scalar sequence $\xi_n = (\xi_n(m))_m$ given by $\xi_n(m) = \langle\chi_{A_m}\otimes x_m^*,T(f\otimes x_n)\rangle$. Then for every $m_0\in\N$ we have that for appropriate scalars $(\zeta_m)_{m=1}^N$ of modulus one
\begin{align}
\label{bounded in little ell1}
\sum_{m=1}^{m_0}|\xi_n(m)| &= \Big|\big\langle\sum_{m=1}^{m_0}\chi_{A_m}\otimes \zeta_mx_m^*,T(f\otimes x_n)\big\rangle\Big|\\\nonumber
& \leq \underbrace{\Big\|\sum_{m=1}^{m_0}\chi_{A_{m}}\otimes (\zeta_m x^*_{m})\Big\|}_{= \max_{1\leq m\leq m_0}\|x^*_{m}\|}\|T\|\|f\| \|x_n\|\nonumber\\
&\leq \|T\|\|f\|\sup_{(x^*,x)\in A\times B}\|x^*\|\|x\|.\nonumber
\end{align}
By Rosenthal's Lemma \ref{rosenthal lemma}, there exists an infinite subset $N = \{n_j:j\in\N\}$ of $\N$ so that for all $i_0\in\N$ we have $\sum_{j\neq i_0}|\xi_{n_{i_0}}(n_j)| \leq \de/2$. After relabelling, for all $n_0\in\N$ we have
\[\sum_{m\neq n_0}|\xi_{n_0}(m)|\leq \de/2.\]
We now show that $(x_n)_n$ is equivalent to the unit vector basis of $\ell_1$. Fix scalars $a_1,\ldots,a_N$. For appropriate scalars $\theta_1,\ldots,\theta_N$ of modulus 1 we have
\begin{equation}
\label{relatively compact orbit eq1}
\sum_{n=1}^Na_n\theta_n\langle \chi_{A_{n}}\otimes x^*_{n},T(f\otimes x_{n})\rangle \geq \de\sum_{n=1}^N|a_n|.
\end{equation}
Put
\begin{align*}
\Lambda & = \Big|\big\langle\sum_{m=1}^N\chi_{A_{m}}\otimes (\theta_m x^*_{m}),T\big(\sum_{n=1}^Nf\otimes a_nx_{n}\big)\big\rangle\Big|\\
&\leq \underbrace{\Big\|\sum_{m=1}^N\chi_{A_{m}}\otimes (\theta_m x^*_{m})\Big\|}_{= \max_{1\leq m\leq N}\|x^*_{m}\|}\|T\|\|f\| \Big\|\sum_{n=1}^Na_nx_n\Big\|\\
&\leq \|T\|\|f\|\sup_{x^*\in A}\|x^*\|\Big\|\sum_{n=1}^Na_nx_n\Big\| . 
\end{align*}
Also,
\begin{align*}
\La & = \Big|\sum_{n=1}^Na_n\theta_n\langle \chi_{A_{n}}\otimes x^*_{n},T(f\otimes x_{n})\rangle + \sum_{n=1}^Na_n\sum_{m\neq n}\theta_m\langle \chi_{A_{m}}\otimes x^*_{m},T(f\otimes x_{n})\rangle\Big|\\
&\geq \de\sum_{n=1}^N|a_n| - \sum_{n=1}^N|a_n|\sum_{m\neq n}|\xi_n(m)| \geq \de/2\sum_{n=1}^N|a_n|.
\end{align*}
Thus, $\|\sum_{n=1}^Na_nx_n\| \geq c\sum_{n=1}^N|a_n|$, where $c = \de/(2\|T\|\|f\|\sup_{x^*\in A}\|x^*\|)$.
\end{proof}

\subsection{Compactness in operator norm} We discuss families that are uniformly eventually close to multipliers and how to obtain compact sets from them. This is particularly important in the sequel because compactness will be essential in achieving strong stabilization properties of operators $T:L_1(X)\to L_1(X)$.

\begin{ntt}
For $n\in\N$ we denote by $P_{(\leq n)}:L_1\to L_1$ the norm-one canonical basis projection onto $\langle\{h_I:I\in\mathcal{D}^n\}\rangle$. We also denote $P_{(>n)} = I - P_{(\leq n)}$.
\end{ntt}

\begin{dfn}
\label{uechm def}
A set $\mathscr{T}$ of bounded linear operators on $L_1$ is called {\em uniformly eventually close to Haar multipliers} if there exists a collection $(D_T)_{T\in\mathscr{T}}$ in $\mathcal{L}_{HM}(L_1)$ so that
\begin{equation*}
\lim_n\sup_{T\in\mathscr{T}}\Big(\|(T-D_T)P_{(>n)}\| + \|P_{(>n)}(T-D_T)\|\Big) = 0.
\end{equation*}
\end{dfn}

The main result of this subsection is the first one in the paper that requires a certain amount of legwork.
\begin{thm}[Fundamental Lemma]
\label{fundamental lemma}
Let $X$ be a Banach space, $A$, $B$ be bounded subsets of $X^*$ and $X$, respectively, and $C\subset A\times B$. Let $T:L_1(X)\to L_1(X)$ be a bounded linear operator and assume the following.
\begin{enumerate}[leftmargin=20pt,label=(\roman*)]

\item The set $B$ contains no sequence that is equivalent to the unit vector basis of $\ell_1$.

\item The set $\{T^{(x^*,x)}:(x^*,x)\in C\}$ is uniformly eventually close to Haar multipliers.

\end{enumerate}
Then, for every $\eta>0$, there exits a closed subset $\mathscr{A}$ of $[\mathcal{D}^+]$ with $|\mathscr{A}|>1-\eta$ so that the set $\{T^{(x^*,x)}P_{\mathscr{A}}:(x^*,x)\in C\}$ is relatively compact in the operator norm topology.
\end{thm}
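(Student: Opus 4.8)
The plan is to fix a suitable $\mathscr{A}$ and then show that $\mathscr{K}:=\{T^{(x^*,x)}P_\mathscr{A}:(x^*,x)\in C\}$ is totally bounded in $L(L_1)$; since $L(L_1)$ is complete this yields relative compactness. Write $\mathscr{T}=\{T^{(x^*,x)}:(x^*,x)\in C\}$; it is bounded by $M:=\|T\|\sup_A\|x^*\|\sup_B\|x\|$. Let $(D_{(x^*,x)})$ be Haar multipliers witnessing (ii), with entries $(a_I^{(x^*,x)})_{I\in\cD^+}$, and set $\beta_n:=\sup_{(x^*,x)\in C}\big(\|(T^{(x^*,x)}-D_{(x^*,x)})P_{(>n)}\|+\|P_{(>n)}(T^{(x^*,x)}-D_{(x^*,x)})\|\big)$, so $\beta_n\to0$ and $(\beta_n)$ is non-increasing. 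After replacing each $D_{(x^*,x)}$ by the Haar multiplier agreeing with it on all $I$ with $\mathrm{level}(I)>n_0$ (where $\beta_{n_0}\le1$) and having its remaining finitely many entries equal to $|I|^{-1}\langle h_I,T^{(x^*,x)}h_I\rangle$ — which changes neither $(D-T)P_{(>n)}$ nor $P_{(>n)}(D-T)$ for $n\ge n_0$ — one checks $\sup_{(x^*,x)}\|D_{(x^*,x)}\|<\infty$, hence $\sup_{(x^*,x)}\trn{D_{(x^*,x)}}=:M'<\infty$ and in particular $\sup_{(x^*,x),I}|a_I^{(x^*,x)}|\le M'$. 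Now for every $n$ and every closed $\mathscr{A}\subseteq[\cD^+]$, the commuting Haar multipliers $P_{(\le n)},P_{(>n)},P_\mathscr{A}$ give
\begin{equation*}
T^{(x^*,x)}P_\mathscr{A}=\underbrace{T^{(x^*,x)}P_{(\le n)}P_\mathscr{A}}_{\mathrm{(I)}}+\underbrace{(T^{(x^*,x)}-D_{(x^*,x)})P_{(>n)}P_\mathscr{A}}_{\mathrm{(II)}}+\underbrace{D_{(x^*,x)}P_{(>n)}P_\mathscr{A}}_{\mathrm{(III)}},
\end{equation*}
and $\|\mathrm{(II)}\|\le\beta_n$ uniformly in $(x^*,x)$.

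\textbf{Head and tail.} First I would handle the head $\mathrm{(I)}$, using only hypothesis (ii). For each $I\in\cD^+$ the set $\{T^{(x^*,x)}h_I:(x^*,x)\in C\}$ is relatively norm-compact in $L_1$: for $N\ge\mathrm{level}(I)$ we have $P_{(>N)}T^{(x^*,x)}h_I=P_{(>N)}(T^{(x^*,x)}-D_{(x^*,x)})h_I$ since $D_{(x^*,x)}h_I=a_I^{(x^*,x)}h_I$ is killed by $P_{(>N)}$, whence $\|T^{(x^*,x)}h_I-P_{(\le N)}T^{(x^*,x)}h_I\|\le\beta_N\to0$ uniformly, while $\{P_{(\le N)}T^{(x^*,x)}h_I\}$ is a bounded subset of the finite-dimensional space $\langle h_J:J\in\cD^N\rangle$; so the family is totally bounded. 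Writing $\mathrm{(I)}=(T^{(x^*,x)}|_E)\circ\big(P_{(\le n)}P_\mathscr{A}\big)$ with $E=\langle h_J:J\in\cD^n\cap\mathcal{A}\rangle$ finite-dimensional and $P_{(\le n)}P_\mathscr{A}$ a fixed norm-one map into $E$, it follows (finite dimension of $E$) that $\{T^{(x^*,x)}|_E:(x^*,x)\in C\}$ is relatively compact in $L(E,L_1)$, hence $\{\mathrm{(I)}:(x^*,x)\in C\}$ is relatively norm-compact for every $n$ and $\mathscr{A}$. Next, suppose $\mathscr{A}$ is closed with $|\mathscr{A}|>1-\eta$ and satisfies the uniform tightness condition
\begin{equation}\label{eq:fl-star}
\lim_{n\to\infty}\sup_{(x^*,x)\in C}\ \sup_{(I_k)_k\in\mathscr{A}}\ \sum_{k>n}\big|a_{I_k}^{(x^*,x)}-a_{I_{k-1}}^{(x^*,x)}\big|=0.
\end{equation}
Then $\{\mathrm{(III)}:(x^*,x)\in C\}$ is relatively norm-compact: given a sequence in $C$, Helly's selection theorem applied diagonally over the countable set $\mathcal{A}$ (using $|a_I^{(x^*,x)}|\le M'$) gives a subsequence along which all entries converge to some $(a_I^\infty)$; by Remark~\ref{restricted triple bar} and \eqref{eq:fl-star} the associated multiplier $D^\infty$ is bounded, and splitting the branch sum at a level $N=N(\e)$ furnished by \eqref{eq:fl-star} and using that only finitely many entries (those at levels $\le N$) matter in the head shows $\limsup_m\trn{(D_{(x^*_m,x_m)}-D^\infty)P_{(>n)}P_\mathscr{A}}\le C\e$ for every $\e>0$, i.e.\ norm convergence of the term-$\mathrm{(III)}$ operators. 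Granting such an $\mathscr{A}$, the conclusion follows: given $\e>0$ pick $n$ with $\beta_n<\e$; then $\mathscr{K}$ lies within $\e$ of $\{\mathrm{(I)}+\mathrm{(III)}:(x^*,x)\in C\}$, the image under addition of a subset of a product of two relatively compact sets, hence relatively compact; as $\e>0$ is arbitrary, $\mathscr{K}$ is totally bounded.

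\textbf{The main obstacle: construction of $\mathscr{A}$.} The hard part is producing a closed $\mathscr{A}$ with $|\mathscr{A}|>1-\eta$ satisfying \eqref{eq:fl-star}, and this is where hypothesis (i) is used, through Theorem~\ref{relatively compact orbit}. For fixed $(x^*,x)$ the function $(I_k)_k\mapsto\sum_{k>n}|a_{I_k}^{(x^*,x)}-a_{I_{k-1}}^{(x^*,x)}|$ on the probability space $[\cD^+]$ decreases pointwise to $0$ as $n\to\infty$ (its value at $n=0$ is $\le\trn{D_{(x^*,x)}}\le M'$), so \eqref{eq:fl-star} is the assertion that these functions tend to $0$ uniformly on $\mathscr{A}$, uniformly over $(x^*,x)\in C$; equivalently, that the non-increasing sequence $H_n:=\sup_{(x^*,x)\in C}\sum_{k>n}|a_{I_k}^{(x^*,x)}-a_{I_{k-1}}^{(x^*,x)}|$ tends to $0$ uniformly on $\mathscr{A}$, which by Egorov's theorem follows once $H_n\to0$ a.e., i.e. (by monotonicity) once $H_n\to0$ in measure. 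To get the last statement one exploits the uniform integrability of the orbits $\{T^{(x^*,x)}f:(x^*,x)\in C\}$: since $a_I^{(x^*,x)}$ differs from $|I|^{-1}\langle h_I,T^{(x^*,x)}h_I\rangle$ by at most $\beta_{\mathrm{level}(I)}$, the lower estimate \eqref{branches variation 1a} of Proposition~\ref{branches variation} bounds the branch-variation of $D_{(x^*,x)}$ strictly below an ancestor $J$ of $I$ by $3\,\|D_{(x^*,x)}(|I|^{-1}\chi_I)\,\chi_{J\setminus I}\|_{L_1}$, and uniform integrability makes these $L_1$-masses small, uniformly over $(x^*,x)\in C$, whenever the ambient set has small measure; running this estimate down the dyadic tree as a stopping-time construction — discarding at stage $m$ a family of branches of total measure $<\eta\,2^{-m}$ outside of which the level-$m$ branch-variation is $<2^{-m}$ uniformly over $C$ — leaves a closed set $\mathscr{A}$ of measure $>1-\eta$ on which \eqref{eq:fl-star} holds. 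This stopping-time argument (together with the verification that the relevant suprema are measurable, e.g.\ by restricting to a countable weak$^*$-dense subfamily) is the technical heart of the lemma; all other steps are the soft reductions above.
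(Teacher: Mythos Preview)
Your decomposition into (I)+(II)+(III), the head argument (norm-compactness of $\{T^{(x^*,x)}h_I:(x^*,x)\in C\}$ from (ii) alone), and the tail argument via Remark~\ref{restricted triple bar} once a set $\mathscr{A}$ satisfying \eqref{eq:fl-star} is in hand, are all correct and close in spirit to the paper's organization. The paper uses a four-term decomposition $T_n=D_nP_{(>k_N)}+(T_n-D_n)P_{(>k_N)}+P_{(>k_N)}T_nP_{(\le k_N)}+P_{(\le k_N)}T_nP_{(\le k_N)}$ instead of your three-term one, but this is cosmetic.

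The gap is in the construction of $\mathscr{A}$, which you yourself flag as the technical heart. Your plan is to invoke Theorem~\ref{relatively compact orbit} to get uniform integrability of orbits $\{T^{(x^*,x)}f\}$ and combine it with the lower bound \eqref{branches variation 1a} to run a stopping-time argument. But Theorem~\ref{relatively compact orbit} gives uniform integrability for a \emph{fixed} $f$, whereas your bound involves $D_{(x^*,x)}(|I|^{-1}\chi_I)$ with $I$ running to infinity along the branch; you never explain how to get uniformity over this varying family, nor how to pass from $T^{(x^*,x)}$ to $D_{(x^*,x)}$ in this estimate. The ``stopping-time construction'' is asserted rather than carried out, and it is not clear what is being discarded at each stage or why the discarded mass is small uniformly in $(x^*,x)$. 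The paper does \emph{not} use Theorem~\ref{relatively compact orbit} as a black box here; instead it proves the stronger statement that for \emph{every} branch $\sigma=(I_k)$ and every $r>0$ there is $k_0$ with $\sup_{(x^*,x)\in C}\sum_{k\ge k_0}|a^{(x^*,x)}_{I_k}-a^{(x^*,x)}_{I_{k-1}}|\le r$. This is done by contradiction: a failure produces $(x_n^*,x_n)\in C$ and segments $k_n<\ell_n<k_{n+1}$ with variation $\ge r$; Lemma~\ref{quantifiably non-uniformly integrable} converts this into disjoint sets $A_n$ on which $D_n(|I_\ell|^{-1}\chi_{I_\ell})$ carries mass $\ge r/3$; assumption (ii) transfers this to $T_n$; and then, arguing directly with the tensor pairing $\langle g_n\otimes x_n^*,T(\phi_\ell\otimes x_n)\rangle$ and Rosenthal's Lemma~\ref{rosenthal lemma} exactly as in the proof of Theorem~\ref{relatively compact orbit}, one concludes that $(x_n)\subset B$ is equivalent to the $\ell_1$ basis, contradicting (i). Once this branch-by-branch claim is established, the sets $\mathscr{A}_{N,k_0}=\{\sigma:\sup_{(x^*,x)}\sum_{k\ge k_0}|a^{(x^*,x)}_{I_k}-a^{(x^*,x)}_{I_{k-1}}|\le 2^{-N}\}$ exhaust $[\mathcal{D}^+]$ for each $N$, and $\mathscr{A}=\bigcap_N\mathscr{A}_{N,k_N}$ for suitable $k_N$ does the job.
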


\begin{rem}
  It is not hard to see that the unit ball of $\mathcal{L}_{HM}(L_1)$ is a compact set in the strong operator topology of $L_1$. In fact, this is the $w^*$-topology inherited by a predual of $\mathcal{L}_{HM}(L_1)$, namely Rosenthal's Stopping Time space studied by Bang and Odell in \cite{bang:odell:1985,bang:odell:1989}, by Dew in \cite{dew:2003}, and by Apatsidis in \cite{apatsidis:2015}.
 The Fundamental Lemma (Theorem \ref{fundamental lemma}) states is that under the right conditions, strong operator convergence yields convergence in operator norm on a big subspace of $L_1$. Therefore, this is a type of Egorov Theorem. We point out that some restriction to the family of operators is necessary for the conclusion to hold. If one takes for example $D_n = P_{(\leq n)}$ then this converges to $I$ in the strong operator topology. Yet, for no non-empty set of branches $\mathscr{A}$  the set $\{D_nP_\mathscr{A}:n\in\N\}$ is relatively compact in the operator norm topology.
\end{rem}

\begin{lem}
\label{quantifiably non-uniformly integrable}
Let  $r >0$, $(I_k)_{k=0}^\infty\in[\mathcal{D}^+]$ associated to $(\theta_k)_{k=1}^\infty\in\{-1,1\}^\N$, and $(a_k^n)_{(k,n)\in(\{0\}\cup\N)\times\N}$ be a collection of scalars. Assume that there exist $k_1<\ell_1<k_2<\ell_2<\cdots$ so that for each $n\in\N$ we have
\[\sum_{k = k_n+1}^{\ell_n}|a^n_{k} - a^n_{k-1}| \geq r  . \]
For every $\ell,n\in\N$ define $f_n^\ell = \sum_{k=0}^\ell a_k^n\theta_k|I_k|^{-1}h_{I_k}$. Then, there exists  a strictly increasing  sequence of disjoint measurable subsets $(A_n)_n$ of $[0,1)$ so that for all $n\in\N$ and $\ell\geq \ell_n$ we have
\begin{equation*}
f_{n}^{\ell}(s) = f_{n}^{\ell_n}(s)\text{ on }A_n\text{ and }\int_{A_n}\big|f_{n}^{\ell_n}(s)\big|ds \geq r/3.
\end{equation*}
\end{lem}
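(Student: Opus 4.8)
The statement is a quantitative non-uniform-integrability lemma about a single branch function. The idea is that along the fixed branch $(I_k)$, the function $f_n^{\ell}$ restricted to the branch pieces $B_k = I_k\setminus I_{k+1}$ is constant on each $B_k$, so the $L_1$-norm estimate \eqref{branches variation 1a} of Proposition~\ref{branches variation} gives a lower bound for the mass of $f_n^{\ell_n}$ carried by the union $\bigcup_{j=k_n+1}^{\ell_n}B_j$. The plan is to take $A_n$ to be (essentially) this union of branch pieces, truncated so that the $A_n$ are pairwise disjoint and increasing, and to check that $f_n^{\ell}$ does not change on $A_n$ once $\ell\geq\ell_n$.

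\medskip
First I would recall the structure from the proof of Proposition~\ref{branches variation}: for $f_n^\ell$ with $\ell\geq\ell_n$, the value of $f_n^\ell$ on $B_k$ for $k\leq \ell_n$ depends only on the coefficients $a_0^n,\dots,a_{\ell_n}^n$ (indeed on $a_0^n,\dots,a_k^n$), hence $f_n^\ell|_{B_k} = f_n^{\ell_n}|_{B_k}$ for all $k\leq \ell_n$; this is exactly the ``$f_n^\ell(s) = f_n^{\ell_n}(s)$ on $A_n$'' assertion once $A_n\subseteq \bigcup_{k\leq \ell_n}B_k$. Second, I would apply \eqref{branches variation 1a} with $m = k_n$ and $n = \ell_n$ to the coefficient sequence $(a_k^n)_{k=0}^{\ell_n}$: since $f_n^{\ell_n} = \sum_{k=0}^{\ell_n}a_k^n\theta_k|I_k|^{-1}h_{I_k}$, we get
\[
\Big\|f_n^{\ell_n}\big|_{\bigcup_{j=k_n}^{\ell_n}B_j}\Big\|_{L_1} \geq \frac13\Big(\sum_{k=k_n+1}^{\ell_n}|a_k^n - a_{k-1}^n| + |a_{\ell_n}^n|\Big) \geq \frac13\sum_{k=k_n+1}^{\ell_n}|a_k^n - a_{k-1}^n| \geq r/3,
\]
using the hypothesis. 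So the set $\tilde A_n := \bigcup_{j=k_n}^{\ell_n}B_j = I_{k_n}\setminus I_{\ell_n+1}$ carries mass at least $r/3$ of $|f_n^{\ell_n}|$, and $\tilde A_n\subseteq \bigcup_{k\leq \ell_n}B_k$ so the constancy property holds on it.

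\medskip
The remaining issue is that the sets $\tilde A_n = I_{k_n}\setminus I_{\ell_n+1}$ are nested-decreasing, not disjoint, and the lemma demands a \emph{strictly increasing} disjoint sequence. Here I would exploit the gap structure $k_1 < \ell_1 < k_2 < \ell_2 < \cdots$: since $\ell_n + 1 \leq k_{n+1}$, the sets $\tilde A_n = I_{k_n}\setminus I_{\ell_n+1}$ and $\tilde A_{n+1} = I_{k_{n+1}}\setminus I_{\ell_{n+1}+1}$ are actually disjoint, because $I_{k_{n+1}}\subseteq I_{\ell_n+1}$ while $\tilde A_n\cap I_{\ell_n+1} = \emptyset$. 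Thus I would simply set $A_n := I_{k_n}\setminus I_{\ell_n+1} = \bigcup_{j=k_n}^{\ell_n}B_j$. Disjointness is immediate from the gaps; for ``strictly increasing'' in the sense intended (each of positive measure, so that the sequence of partial unions strictly grows), note $|A_n| \geq |B_{k_n}| = 2^{-k_n} > 0$, and if ``strictly increasing'' literally means $A_n \subsetneq A_{n+1}$ one instead takes $A_n := \bigcup_{j=1}^{n}\big(I_{k_j}\setminus I_{\ell_j+1}\big)$, a genuinely increasing sequence whose $n$-th ``new'' piece $I_{k_n}\setminus I_{\ell_n+1}$ still carries $L_1$-mass $\geq r/3$ of $f_n^{\ell_n}$ and on which $f_n^\ell = f_n^{\ell_n}$ for $\ell\geq\ell_n$ (the earlier pieces involve only coefficients with smaller index, so constancy persists there too). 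The main obstacle is purely bookkeeping: making sure the chosen $A_n$ simultaneously lies inside $\bigcup_{k\leq\ell_n}B_k$ (for the constancy/stabilization statement), is disjoint/increasing across $n$ (using the $\ell_n<k_{n+1}$ gaps), and still captures the guaranteed mass $r/3$ from \eqref{branches variation 1a}; no new estimate beyond Proposition~\ref{branches variation} is needed.
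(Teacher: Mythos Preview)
Your proposal is correct and follows essentially the same route as the paper: define $B_k=I_k\setminus I_{k+1}$, set $A_n=\bigcup_{j=k_n}^{\ell_n}B_j$, use that $f_n^\ell$ is constant on each $B_k$ with $k\le\ell_n$ for the stabilization, and apply \eqref{branches variation 1a} with $m=k_n$ for the mass bound. You are in fact more careful than the paper in explicitly verifying disjointness of the $A_n$ from the gap condition $\ell_n<k_{n+1}$ and in addressing the (somewhat infelicitous) phrase ``strictly increasing,'' which the paper's own proof simply ignores.
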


\begin{proof} Let $(B_k)$ be the partition of $[0,1)$, defined by $B_k=I_k\setminus I_{k+1}$, $k\in \N$.
We conclude from  the inequality \eqref{branches variation 1a} in Proposition \ref{branches variation} that:
\begin{enumerate}[leftmargin=20pt,label=(\roman*)]

\item\label{quantifiably non-uniformly integrable 1} for every $k \leq \ell_n \leq \ell\in\N$ and $s\in B_k$ we have $f_n^\ell(s) = f^{\ell_n}_n(s)$ and

\item\label{quantifiably non-uniformly integrable 2} for every $m\leq \ell_n\in\N$ we have
\[\int_{\cup_{k=m}^{\ell_n} B_k}|f_n^{\ell_n}(s)|ds \geq \frac{1}{3}\sum_{k=m+1}^{\ell_n} |a_k - a_{k-1}|.\]

\end{enumerate}
Put $A_n = \cup_{i=k_n}^{\ell_n}B_i$. The conclusion follows directly from \ref{quantifiably non-uniformly integrable 1} and \ref{quantifiably non-uniformly integrable 2}.
\end{proof}

\begin{proof}[Proof of Theorem \ref{fundamental lemma}]
Put $\mathscr{T} = \{T^{(x^*,x)}:(x^*,x)\in A\times B\}$. Take a family $(D_T)_{T}$ that witnesses Definition \ref{uechm def}. For each $T\in\mathscr{T}$ we have
\begin{align}
\|(T - D_T)P_{(>k)}\| &\leq \sup_{S\in\mathscr{T}}\Big(\big\|(S - D_S)P_{(>k)}\big\|\Big) =\e_k .\\
\|P_{(>k)}TP_{(\leq k)}\| &\leq \underbrace{\big\|P_{(>k)}D_TP_{(\leq k)}\big\|}_{=0} +  \big\|P_{(>k)}(T - D_T)P_{(\leq k)}\big\|\nonumber\\
&\leq \big\|P_{(>k)}(T - D_T)\|\\ &\leq \sup_{S\in\mathscr{T}}\Big(\big\|P_{(>k)}(S- D_S)\big\|\Big) \!=\!\de_k.\nonumber
\end{align}
Both $(\e_k)_k$ and $(\de_k)_k$ tend to zero. For each $T\in\mathscr{T}$ denote by $(a^T_I)_{I\in\mathcal{D}^+}$ the entries of $D_T$.

{\em Claim:} Fix $\sigma = (I_k)_{k=1}^\infty\in[\mathcal{D}^+]$ and $r>0$. Then, there exists $k_0\in\N$ so that for all $T\in\mathscr{T}$ we have $\sum_{k=k_0}^\infty|a^T_{I_k} - a^T_{I_{k-1}}| \leq r$.

We will assume that the claim is true and proceed with the rest of the proof. For every $N,k_0,\in\N$ let
\[\mathcal{A}_{N,k_0} = \Big\{\sigma = (I_k)_{k=1}^\infty\in[\mathcal{D}^+]: \sup_{T\in\mathscr{T}}\sum_{k=k_0}^\infty|a^T_{I_k} - a^T_{I_{k-1}}| \leq 2^{-N}\Big\},\]
which is a closed subset of $[\mathcal{D}^+]$ and by the claim we have $\cup_{k_0}\mathscr{A}_{N,k_0} = [\mathcal{D}^+]$, for all $N\!\in\!\N$. We may therefore pick a strictly increasing sequence of natural numbers $(k_N)$ so that for each $N$ we have $|\mathscr{A}_{N,k_N}| \geq 1-\eta/2^N$. We put $\mathscr{A} = \cap_N\mathscr{A}_{N,k_N}$ and we demonstrate that this is the desired set.

To show that $\{TP_\mathscr{A}:T\in\mathscr{T}\}$ is relatively compact with respect to the operator norm we fix  $\e>0$ and $(T_n)_n$ in $\mathscr{T}$. For each $n\in\N$ denote $D_n = D_{T_n}$. We will find  $M\in[\N]^{\infty}$ so that for all $n,m\in M$ we have $\|T_nP_\mathscr{A} - T_mP_\mathscr{A}\| \leq 11\e$. Fix $N\in\N$ so that $2^{-N}\leq \e$, $\e_{k_N}\leq \e$, and $\de_{k_N}\leq\e$. For each $n\in\N$ write
\begin{equation*}
T_n = D_nP_{(>k_N)} + \underbrace{(T_n-D_n)P_{(>k_N)}}_{=:A_n}+\underbrace{P_{(>k_N)}T_nP_{(\leq k_N)}}_{=:B_n} + \underbrace{P_{(\leq k_N)}T_nP_{(\leq k_N)}}_{=:C_n}.
\end{equation*}
Then we have $\|A_n\| \leq \e_{k_N}\leq \e$ and $\|B_n\| \leq \de_{k_N}\leq \e$. By passing to
 a subsequence of $(T_n)$
we may assume that for all $n,m\in\N$ we have
 (letting  $a_I^n=a^{T_n}_I$)
\begin{equation}
\sum_{\left\{\substack{I\in\mathcal{D}^+\\|I|\geq 1/2^{k_N+1}}\right\}}|a_I^n - a_I^m| \leq \e.
\end{equation}
Since the $C_n$ are bounded elements of a finite dimensional space, we can also assume  that
 $\|C_n - C_m\| \leq \e$, for $m,n\in \N$.  Therefore, for $n,m\in \N$ we have
\begin{align*}
\|T_nP_\mathscr{A} - T_mP_\mathscr{A}\| \leq \underbrace{\| D_nP_{(>k_N)}P_\mathscr{A} - D_mP_{(>k_N)}P_\mathscr{A} \|}_{=:\Lambda} + 5\e.
\end{align*}
Luckily, the remaining quantity $\Lambda$ is the norm of a Haar multiplier on $L_1$ and we know how to compute this. If for $\sigma = (I_k)_{k=0}^\infty\in\mathscr{A}$ we put
\begin{alignat*}{2}
\Lambda_\sigma &= \sum_{k=k_N+1}^\infty|(a^n_{I_k}-a^m_{I_k}) - (a^n_{I_{k-1}}-a^m_{I_{k-1}})| + |a^n_{I_{k_N}} - a^m_{I_{k_N}}| + \lim_k|a^n_{I_k}-a^m_{I_k}|\\
&\leq 2\underbrace{\sum_{k=k_N+1}^\infty|(a^n_{I_k}-a^m_{I_k}) - (a^n_{I_{k-1}}-a^m_{I_{k-1}})|}_{\leq 2/2^N\leq 2\e} + 2\underbrace{|a^n_{I_{k_N}} - a^m_{I_{k_N}}|}_{\leq \e}
\leq6\e.
\end{alignat*}
Then, by Remark \ref{restricted triple bar}, $\Lambda = \sup_{\sigma\in\mathscr{A}}\Lambda_\sigma$ and thus $\|T_nP_\mathscr{A} - T_mP_\mathscr{A}\|\leq 11\e$.

We now provide the owed proof of the claim. We fix $\sigma = (I_k)_{k=0}^\infty$, with associated signs $(\theta_k)_{k=0}^\infty$. Let us assume that the conclusion fails. Then, we may find $(T_n)_n = (T^{(x^*_n,x_n)})_n$ in $\mathscr{T}$, each $T_n$ is associated with a $D_n$ (each $D_n$ has entries $(a_I^n)_{I\in\mathcal{D}^+}$), and $k_1 < \ell_1<k_2<\ell_2<\cdots$ so that for all $n\in\N$
\begin{equation*}
\sum_{k=k_n+1}^{\ell_n}|a^{n}_{I_k} - a^{n}_{I_{k-1}}|\geq r.
\end{equation*}
Pick $k_0\in\N$ so that $\e_{k_0}\leq r/12$. For $k,n\in\N$ define $b_k^n = 0$ if $k\leq k_0$ and $b_k^n = a_{I_k}^{n}$ if $k>k_0$. If we additionally assume that $k_1>k_0$ then for all $n\in\N$ we have
\begin{equation}
\label{ucmycse3}
\sum_{k=k_n+1}^{\ell_n}|b_k^n - b^n_{k-1}|\geq r.
\end{equation}
For each $n,\ell\in\N$ put
\[f_n^\ell = \sum_{k=0}^\ell b_k^n\theta_k|I_k|^{-1}h_{I_k} = D_nP_{(>k_0)}(\underbrace{|I_{\ell+1}|^{-1}\chi_{I_{\ell+1}}}_{=:\psi_\ell}).\]
By Lemma \ref{quantifiably non-uniformly integrable} we may find a sequence of $(A_n)_n$ of disjoint measurable sets so that for each $n\in\N$ the sequence $(f_n^\ell(s))_{\ell\geq \ell_n}$ is constant for all $s\in A_n$ and $\|f_n^{\ell_n}|_{A_n}\|_{L_1} \geq r/3$. For each $n\in\N$ fix $g_n$ in the unit sphere of $L_\infty$ with support in $A_n$ so that for all $\ell\geq \ell_n$
\begin{align*}
r/3&\leq \|f_n^{\ell_n}|_{A_n}\|_{L_1} = \big|\langle g_n ,f_n^{\ell}\rangle\big| = \big|\langle g_n, D_nP_{(>k_0)}(\psi_\ell)\rangle\big|\\
\nonumber &\leq   \big|\langle g_n, T_nP_{(>k_0)}(\psi_\ell)\rangle\big| + r/12  = \big|\langle g_n\otimes x_n^*, T\big(\underbrace{(P_{(>k_0)}\psi_\ell)}_{\phi_\ell}\otimes x_n\big)\rangle\big| + r/12.
\end{align*}
Note that for all $\ell\in\N$, $\|\phi_\ell\|_{L_1}\leq 2$. Then, for all $n\in\N$ and $\ell\geq\ell_n$
\begin{equation*}
\big|\langle g_n\otimes x_n^*, T\big(\phi_\ell\otimes x_n\big)\rangle\big| \geq r/4.
\end{equation*}
Pick an  $L\in[\N]^\infty$ so that for each $m,n\in\N$ the limit
\[
\xi_n(m) := \lim_{\ell\in L}\langle g_m\otimes x_m^*, T(\psi_\ell\otimes x_n) \rangle \text{ exists}.
\]
Because the sequence $(g_m)_m$ is disjointly supported, an identical calculation as in \eqref{bounded in little ell1} yields that for all $n\in\N$ we have $$\sum_m|\xi_n(m)| \leq 2\|T\|\sup_{(x^*,x)\in C}\|x^*\|\|x\|.$$ Thus, by Rosenthal's Lemma \ref{rosenthal lemma} we may pass to a subsequence and relabel so that for all $n_0\in\N$ we have $\sum_{m\neq n_0}|\xi_{n_0}(m)| \leq r/8$.

We will show that $(x_n)_n$ must be equivalent to the unit vector basis of $\ell_1$, which would contradict our assumption and thus finish the proof.  Fix scalars $a_1,\ldots,a_N$ and for $\ell\in L$ with $\ell\geq \ell_N$ pick appropriate scalars $\zeta^\ell_1,\ldots,\zeta^\ell_N$ of modulus one so that  we have
\[
\frac{r}{4}\sum_{n=1}^N|a_n| \leq \sum_{n=1}^N\langle g_n\otimes\big(\zeta^\ell_nx_n^*\big), T\big(\phi_\ell\otimes(a_nx_n)\big)\rangle
\]
and put
\begin{align*}
\La_\ell &= \big|\big\langle\sum_{m=1}^N g_n\otimes\big(\zeta^\ell_mx_m^*\big),\sum_{n=1}^NT\big(\phi_\ell\otimes(\sum_{n=1}^Na_nx_n)\big)\big\rangle\big|\\
&\leq \Big(2\|T\|\sup_{x^*\in A}\|x^*\|\Big)\big\|\sum_{n=1}^N a_nx_n\big\|.
\end{align*}
But also,
\begin{align*}
\lim_{\ell\in L}\La_\ell &= \lim_{\ell\in L}\Big| \sum_{n=1}^N\langle g_n\otimes\big(\zeta^\ell_nx_n^*\big), T\big(\phi_\ell\otimes(a_nx_n)\big)\rangle\\
&+ \sum_{n=1}^Na_n\sum_{m\neq n}\zeta_m^\ell \langle g_m\otimes x_m^*, T\big(\psi_\ell\otimes x_n\big) \rangle\Big|\\
&\geq \frac{r}{4}\sum_{n=1}^N|a_n| - \sum_{n=1}^N|a_n|\sum_{m\neq n}|\xi_n(m)| \geq \frac{r}{8}\sum_{n=1}^N|a_n|.
\end{align*}
Therefore, $\|\sum_{n=1}^Na_nx_n\| \geq r/(16\|T\|\sup_{x^*\in A}\|x^*\|)\sum_{n=1}^N|a_n|$.
\end{proof}

\section{Projectional factors of $X$-diagonal operators}

The main purpose of the section is to prove the following first step towards the final result. The Fundamental Lemma (Theorem \ref{fundamental lemma}) is a necessary part of the proof.

\begin{thm}
\label{arbitrary to X-diagonal}
Let $X$ be in $\cH^*$ and let $T:L_1(X)\to L_1(X)$ be a bounded linear operator. Then, for every $\e>0$, $T$ is a $1$-projectional factor with error $\e$ of an $X$-diagonal operator $S:L_1(X)\to L_1(X)$.
\end{thm}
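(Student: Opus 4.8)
The plan is to construct, via a recursive selection, a faithful Haar system $(\widetilde h_I)_{I\in\mathcal D^+}$ in $L_1$ and another one $(\widehat h_L)_{L\in\mathcal D^+}$ in $X$ so that the associated projectional factor $S = A^{-1}PTA$ (in the sense of Example~\ref{standard projectioning}) has entries $S^{(L,M)}$ that are \emph{uniformly small in operator norm whenever $L\neq M$}; then Remark~\ref{in practice close to X-diagonal} produces an $X$-diagonal $\bar S$ with $\|S-\bar S\|\le\e$, and transitivity of projectional factors (Proposition~\ref{approximate factor transitive}\ref{approximate factor transitive b}) finishes. First I would fix a summable family $(\e_{(L,M)})_{L\neq M}$ with $\sum\e_{(L,M)}\le\e$ and reduce the goal to: after the substitution, $\|S^{(L,M)}\|\le\e_{(L,M)}$ for $L\neq M$. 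The key point is that $\langle h_I\otimes h_L,\,S(h_J\otimes h_M)\rangle = \langle\widetilde h_I\otimes\widehat h_L,\,T(\widetilde h_J\otimes\widehat h_M)\rangle$, and controlling $\|S^{(L,M)}\|$ for $L\ne M$ amounts to controlling $\sup_{\|f\|_1\le 1}\|S^{(L,M)}f\|_1$, which by duality and density means controlling quantities of the form $\langle\widetilde g\otimes\widehat h_L, T(\widetilde f\otimes\widehat h_M)\rangle$ for $\widetilde f,\widetilde g$ ranging over suitable building blocks. This is where the class $\cH^*$ enters: since $(r_n)$ is weakly null in $X$ (equivalently, does not span $\ell_1$), the set $B=\{\mu_M h_M^{\text{(perturbations)}}\}$ contains no $\ell_1$-sequence, so Theorem~\ref{relatively compact orbit} makes the relevant orbits weakly relatively compact and the corresponding pairings can be driven to zero by choosing later faithful-Haar generators from Rademacher-like weakly null sequences.

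The recursion would proceed along the linear order $\iota$ on $\mathcal D^+\times\mathcal D^+$ (choosing $\widehat h_L$'s and $\widetilde h_I$'s interleaved, as in the icebreaker construction): having chosen all generators of index $<k$, we choose the next one — say $\widehat h_L$ supported on the appropriate half as dictated by Definition~\ref{faithful Haar system} — among the terms of a Rademacher sequence relative to that support. Because finitely many constraints $\langle\widetilde h_I\otimes\widehat h_L, T(\widetilde h_J\otimes\widehat h_M)\rangle$ (with at least one of the already-chosen indices differing from the new one) must be made $\le$ the prescribed $\e_{(\cdot,\cdot)}$, and each such constraint is, via the tensor identities \ref{standard fare tensor d}–\ref{standard fare tensor e}, a pairing of the new generator against a \emph{fixed} vector in $L_1$ or $X$ (after fixing everything else), weak nullity of the Rademacher sequence lets us pass to a subsequence killing all of them simultaneously. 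The Fundamental Lemma (Theorem~\ref{fundamental lemma}) is then invoked to handle the \emph{diagonal-in-one-coordinate but off-diagonal in the other} terms: the family $\{T^{(x^*,x)}\}$ over the relevant bounded sets is uniformly eventually close to Haar multipliers once we have arranged off-diagonal smallness in the $L_1$-variable, so restricting to $P_{\mathscr A}$ for a branch-set $\mathscr A$ of measure $>1-\eta$ makes that family norm-compact; a finite-net argument over that compact set then lets the recursion succeed with only finitely many genuine constraints at each stage. Finally, by Proposition~\ref{gggame}/Example~\ref{substandard projectioning}, restricting to $Y_{\mathscr A}$ still yields a subspace isometric to and $1$-complemented in $L_1$, so the composite map $A$ and projection $P$ we build witness that $T$ is a $1$-projectional factor with error $\e$ of the resulting $X$-diagonal operator.

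I expect the main obstacle to be the \emph{bookkeeping that makes the two compactness inputs cooperate}: Theorem~\ref{relatively compact orbit} only gives \emph{weak} relative compactness of orbits $\{T^{(x^*,x)}f\}$, which suffices to kill a single off-diagonal pairing against a fixed functional but not to control $\|S^{(L,M)}\|$ (an operator-norm quantity, hence a supremum over infinitely many $f$) uniformly; that uniform control is exactly what Theorem~\ref{fundamental lemma} supplies, but only after passing to $P_{\mathscr A}$ and only for families that are already \emph{uniformly eventually close to Haar multipliers}. So the delicate part is ordering the construction so that, at the moment we invoke the Fundamental Lemma, the partially-built operator's entries genuinely are uniformly eventually close to Haar multipliers — this forces the recursion to first secure the off-diagonal $L_1$-estimates \eqref{16-1-21-1}-type conditions and only afterwards extract the norm-compact family and the stopping-time branch set $\mathscr A$, absorbing the lost measure $\eta$ into the (still $1$-complemented, still isometrically $L_1$) subspace $Y_{\mathscr A}$. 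Everything else — the tensor-product identities, the faithful Haar system formalism, transitivity — is routine given the machinery already set up in the Preliminaries.
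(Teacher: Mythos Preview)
Your proposal is correct and follows essentially the same approach as the paper: first build two faithful Haar systems so that the resulting $1$-projectional factor $S$ has its family $\{S^{(g,f)}:(g,f)\text{ disjointly supported}\}$ uniformly eventually close to Haar multipliers (this is the paper's Proposition~\ref{pass to mead}, and your description of the recursion via Rademacher-type weakly null choices matches it), then apply the Fundamental Lemma to obtain norm-compactness after restricting to $P_{\mathscr A}$, and finally build one more faithful Haar system in $X$ so that the off-diagonal entries have small \emph{operator norm} on $Y_{\mathscr A}$, finishing with Proposition~\ref{gggame}. Your diagnosis of the main obstacle---that weak compactness alone cannot control $\|S^{(L,M)}\|$ and that the Fundamental Lemma can only be invoked \emph{after} the uniform-eventual-closeness condition has been secured---is exactly the point, and the paper organizes the argument as two separate constructions (first Proposition~\ref{pass to mead}, then a second $X$-side faithful Haar system chosen along a subsequence $\mathscr U$ on which the compact-family limits are zero) rather than one interleaved recursion, but this is purely organizational.
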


The strategy is to first pass to an operator $S$ with the family $(S^{(L,M)})_{L\neq M}$ uniformly eventually close to Haar multipliers (in reality, $S$ satisfies something slightly stronger). We will then use the Fundamental Lemma to eliminate these entries altogether. The following result states how uniform eventual proximity to Haar multipliers is achieved in practice.

\begin{lem}
\label{uechm in practice}
Let $\mathscr{T}$ be a subset of $\mathcal{L}(L_1)$ and $(\e_{(I,J)})_{(I,J)\in\mathcal{D}^+\times\mathcal{D}^+}$ be a summable collection of positive real numbers. If for every $I\neq J\in\mathcal{D}^+$ and $T\in\mathscr{T}$ we have
$\big|\langle h_I, T\big(|J|^{-1}h_J\big)\rangle\big|\leq \e_{(I,J)}$
then $\mathscr{T}$ is uniformly eventually close to Haar multipliers.
\end{lem}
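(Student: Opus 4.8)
The plan is to associate to each $T\in\mathscr T$ the Haar multiplier $D_T$ whose entries are the diagonal coefficients of $T$, namely $a^T_I=\langle h_I,T(|I|^{-1}h_I)\rangle$ for $I\in\mathcal D^+$ (with the convention $|\emptyset|^{-1}=1$; recall that the coefficient functional dual to $h_I$ is $|I|^{-1}h_I$, since $\langle h_I,h_I\rangle=|I|$, so these are exactly the ``diagonal entries'' of $T$ as in the proof of Theorem~\ref{icebreaker}). Equivalently, define $S_T$ on the linear span of the Haar system by $S_Th_I=Th_I-a^T_Ih_I$, so that $\langle h_I,S_T(|J|^{-1}h_J)\rangle=\langle h_I,T(|J|^{-1}h_J)\rangle$ when $I\neq J$ and $=0$ when $I=J$. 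The goal is then to check that $D_T=T-S_T$ is a bounded Haar multiplier and that $\|S_TP_{(>n)}\|+\|P_{(>n)}S_T\|\to0$ uniformly in $T\in\mathscr T$, which is precisely Definition~\ref{uechm def} for the collection $(D_T)_{T\in\mathscr T}$.

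The first step is the elementary observation that for every $g\in L_1$ one has $\|g\|_{L_1}\le\sum_{I\in\mathcal D^+}|\langle h_I,g\rangle|$, which is just the triangle inequality applied to the Haar expansion $g=\sum_I |I|^{-1}\langle h_I,g\rangle h_I$ together with $\|h_I\|_{L_1}=|I|$. Applying this to $g=S_T(|J|^{-1}h_J)$ and invoking the hypothesis gives $\|S_T(|J|^{-1}h_J)\|_{L_1}\le\sum_{I\neq J}\e_{(I,J)}$ for every $J\in\mathcal D^+$. Next, for a dyadic interval $J$ with associated decreasing chain $\emptyset=J_0\supset J_1\supset\cdots\supset J_{k_0}=J$, formula~\eqref{normalized intervals simple} gives $|J|^{-1}\chi_J=\sum_{k=0}^{k_0-1}\theta_k|J_k|^{-1}h_{J_k}$, and since $|J_k|^{-1}S_Th_{J_k}=S_T(|J_k|^{-1}h_{J_k})$ we obtain $\|S_T(|J|^{-1}\chi_J)\|_{L_1}\le\sum_{k=0}^{k_0-1}\sum_{I\neq J_k}\e_{(I,J_k)}\le\e$, where $\e:=\sum_{I\neq J}\e_{(I,J)}<\infty$ and we used that the $J_k$ are distinct. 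Because the closed convex symmetric hull of $\{|J|^{-1}\chi_J:J\in\mathcal D\}$ is the unit ball of $L_1$ (proof of Theorem~\ref{diagonal formula}), this shows $\|S_T\|\le\e$; in particular $S_T$ extends to a bounded operator, hence so does $D_T=T-S_T$, and since $D_Th_I=a^T_Ih_I$ it belongs to $\mathcal L_{HM}(L_1)$.

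For the uniform tail estimates I would rerun the last computation scale by scale. In the expansion $|J|^{-1}\chi_J=\sum_{k=0}^{k_0-1}\theta_k|J_k|^{-1}h_{J_k}$ the vector $h_{J_k}$ survives $P_{(>n)}$ precisely when $J_k\in\mathcal D_m$ for some $m>n$, i.e.\ when $k\ge n+2$ (and $h_\emptyset$ never survives); therefore $P_{(>n)}(|J|^{-1}\chi_J)=\sum_{k=n+2}^{k_0-1}\theta_k|J_k|^{-1}h_{J_k}$, whence $\|S_TP_{(>n)}(|J|^{-1}\chi_J)\|_{L_1}\le\sum_{\{J':|J'|\le2^{-n-1}\}}\sum_{I\neq J'}\e_{(I,J')}=:\delta_n$, a tail of the convergent series $\e$, so $\delta_n\to0$. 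Dually, applying $P_{(>n)}$ after $S_T$ and expanding each $S_T(|J_k|^{-1}h_{J_k})$ in the Haar basis retains only the indices $I\in\mathcal D_m$ with $m>n$, which gives $\|P_{(>n)}S_T(|J|^{-1}\chi_J)\|_{L_1}\le\sum_{\{I:|I|\le2^{-n-1}\}}\sum_{J'\neq I}\e_{(I,J')}=:\delta'_n\to0$. Taking the supremum over $J\in\mathcal D$ via the convex-hull argument yields $\sup_{T\in\mathscr T}\big(\|S_TP_{(>n)}\|+\|P_{(>n)}S_T\|\big)\le\delta_n+\delta'_n\to0$, as required.

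The only place that needs a little care is the bookkeeping: one must bound $S_T=T-D_T$ directly (rather than $D_T$) in order to know that $D_T$ is bounded before treating it as a Haar multiplier, and one must correctly identify which Haar indices are retained by the truncation $P_{(>n)}$ on the domain side versus the range side. Beyond that the argument is just the triangle inequality together with the summability of $(\e_{(I,J)})$.
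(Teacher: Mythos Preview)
Your proof is correct and follows essentially the same approach as the paper: define $D_T$ via the diagonal coefficients $a_I^T=\langle h_I,T(|I|^{-1}h_I)\rangle$, bound $T-D_T$ using $\|g\|_{L_1}\le\sum_I|\langle h_I,g\rangle|$, and obtain the uniform tail estimates by restricting the sum to indices $I$ (resp.\ $J$) not in $\mathcal D^n$. You spell out the extreme-point argument via $|J|^{-1}\chi_J$ and the chain expansion~\eqref{normalized intervals simple} that the paper leaves implicit, but the substance is the same.
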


\begin{proof}
For fixed $T\in\mathscr{T}$ put $a_I = \langle h_I, T(|I^{-1}|h_I)\rangle$. This collection defines a bounded Haar multiplier $D_T$ because for all $f$ in the unit ball of $L_1$, $\|(T-D_T)f\| \leq \sum_{I\in\mathcal{D}^+}\sum_{\{J\in\mathcal{D}^+: J\neq I\}}|\langle h_I,T(|J|^{-1}h_J)\rangle| <\infty$. Also, for all $n\in\N$,
\begin{align*}
\Big\|TP_{(>n)} - D_TP_{(>n)}\Big\|& \leq \sum_{I\in\mathcal{D}^+}\sum_{J\in\mathcal{D}^+\setminus\mathcal{D}^n}\e_{(I,J)} =:\e_n\text{ and}\\
\Big\|P_{(>n)} T - P_{(>n)}D_T\Big\| & \leq \sum_{I\in\mathcal{D}^+\setminus\mathcal{D}^n}\sum_{J\in\mathcal{D}^+}\e_{(I,J)} =:\de_n.
\end{align*}
Both $(\e_n)_n$ and $(\de_n)_n$ tend to zero.
\end{proof}

The next lemma is the basic tool used to achieve the first step.

\begin{lem}
\label{far enough acts small}
Let $X$ be in $\cH^*$ and $\mathscr{T}\subset \mathcal{L}(X)$, $G\subset X^*$, and $F\subset X$ be finite sets. Then, for any $\e>0$, there exists $i_0\in\N$ so that for any disjoint collection $\De$ of $\mathcal{D}^+$ with $\min\iota(\De)\geq i_0$ and any $\theta\in\{-1,1\}^\De$ we have
\[
\max_{g\in G,T\in\mathscr{T}}\big|\big\langle g,T(h_\De^\theta)\big\rangle\big| \leq \e\text{ and }\max_{f\in F,T\in\mathscr{T}}\big|\big\langle h_\De^\theta,T(f)\big\rangle\big| \leq \e
\]
(recall that $h^{\theta}_\Delta$ was  introduced before Definition \ref{faithful Haar system}).
\end{lem}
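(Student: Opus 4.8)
The statement is a finite, uniform version of the fact that the Haar functions (suitably renormalized) form a weakly null sequence. Since $X\in\cH^*$, condition \ref{nice RI class 1} holds, which — as recorded in the Remark following Definition \ref{nice RI class} — is equivalent to the Rademacher sequence $(r_n)$ being weakly null in $X$. First I would reduce the ``$h_\De^\theta$ on the right'' estimate $\max_{f\in F,T\in\mathscr{T}}|\langle h_\De^\theta,T(f)\rangle|\le\e$ to the claim that, for each fixed $y^*\in X^*$, the quantity $\langle y^*,h_\De^\theta\rangle$ is small once $\min\iota(\De)$ is large, uniformly in $\theta$; applying this with $y^*$ ranging over the finite set $\{T^*g: g\in G\}\cup\{(T^*f):\dots\}$ — more precisely, for the second estimate one takes $y^* = $ (the functional $v\mapsto\langle v,T(f)\rangle$ restricted to the span of the Haar system) and for the first $y^* = T^*g$ — gives both halves. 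So the core is: \emph{for every $y^*\in X^*$ and every $\e>0$ there is $i_0$ so that $|\langle y^*,h_\De^\theta\rangle|\le\e$ whenever $\De$ is a disjoint collection in $\mathcal D^+$ with $\min\iota(\De)\ge i_0$ and $\theta\in\{-1,1\}^\De$}.

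**Key steps.** To prove the core claim, suppose it fails for some $y^*$ and $\e>0$. Then I can find disjoint collections $\De_1,\De_2,\dots$ with $\min\iota(\De_n)\to\infty$ and signs $\theta^{(n)}$ with $|\langle y^*,h_{\De_n}^{\theta^{(n)}}\rangle|>\e$. Since $\min\iota(\De_n)\to\infty$ and each $\De_n$ is finite, after passing to a subsequence I may assume the collections $\De_n$ are ``successive'' in the Haar order — i.e. $\max\iota(\De_n)<\min\iota(\De_{n+1})$; in particular each $h_{\De_n}^{\theta^{(n)}}$ is a block of the Haar basis living strictly above level $n$. Now the decisive point is that $(h_{\De_n}^{\theta^{(n)}})_n$ is a \emph{symmetric} basic sequence with the same distribution structure as a Rademacher sequence: each $h_\De^\theta=\sum_{J\in\De}\theta_J h_J$ is, after the isometry identifying $(\tilde h_\emptyset\tilde h_I)$ with $(h_I)$ and using the rearrangement invariance in Definition \ref{RI def}\ref{RI def 1}, distributionally a $\pm1$-valued function on a set of measure $|\De^*|$, and successive such blocks with disjoint supports behave exactly like (a multiple of) the Rademacher sequence. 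More directly: because the $\De_n$ are successive and consist of Haar intervals of small measure, $\sum_n a_n h_{\De_n}^{\theta^{(n)}}$ has, for any fixed scalars $(a_n)$, a distribution independent of the signs and of finite permutations — this is the same symmetry computation as in the cited Remark. Hence $(h_{\De_n}^{\theta^{(n)}})$ is symmetric, so by Rosenthal's $\ell_1$ theorem it is either equivalent to the $\ell_1$-basis or weakly null. The first alternative is excluded: a sequence of successive normalized(ish) Haar blocks that spans $\ell_1$ would force $(r_n)$ (or a subsequence) to be $\ell_1$-equivalent, contradicting \ref{nice RI class 1}. (Concretely: averaging $h_{\De_n}^{\theta^{(n)}}$ over signs and permutations reproduces, up to the rearrangement invariance, a genuine Rademacher function.) Therefore $(h_{\De_n}^{\theta^{(n)}})$ is weakly null, so $\langle y^*,h_{\De_n}^{\theta^{(n)}}\rangle\to0$, contradicting $|\langle y^*,h_{\De_n}^{\theta^{(n)}}\rangle|>\e$. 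This proves the core claim; compactness of the finite sets $G,F,\mathscr{T}$ then upgrades it to the uniform statement, and $i_0$ is obtained as the max over finitely many $y^*$'s of the individual thresholds.

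**Main obstacle.** The routine parts are the reduction to a single functional and the finite-union argument at the end. The real work is the symmetry/weak-nullity dichotomy for the blocks $h_{\De_n}^{\theta^{(n)}}$: one must argue carefully that successive disjoint Haar blocks $\sum_{J\in\De}\theta_J h_J$ with $\min\iota(\De)$ large are, as a sequence, symmetric in $X$ and not $\ell_1$-equivalent, invoking only Definition \ref{RI def}\ref{RI def 1} (rearrangement invariance) and hypothesis \ref{nice RI class 1}. The subtlety is that $h_\De^\theta$ is not literally a Rademacher function, only distributionally equivalent to a scaled one on its support, and one must handle the bookkeeping of disjoint supports across different $n$ and the normalization carefully so that the symmetry computation in the Remark after Definition \ref{nice RI class} genuinely applies; once that is in place, Rosenthal's lemma and the exclusion of the $\ell_1$ case finish it quickly.
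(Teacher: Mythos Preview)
Your strategy is in the right spirit but has two genuine gaps. First, the two inequalities are not symmetric: in $\langle g, T(h_\De^\theta)\rangle = \langle T^*g, h_\De^\theta\rangle$ the block $h_\De^\theta$ sits in $X$ and you need weak nullity there, but in $\langle h_\De^\theta, T(f)\rangle$ the block sits in $X^*$ (via Proposition~\ref{RI properties}~\ref{RI properties 2}) and is paired with $T(f)\in X$, so what is required is $w^*$-nullity in $X^*$---your ``core claim'' does not cover this. Fortunately that half is almost trivial: bounded blocks of the Haar system viewed in $X^*$ are automatically $w^*$-null because the Haar system is a Schauder basis of $X$. Second, and more seriously, the permutation-symmetry claim fails. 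After passing to successive $\De_n$ the sequence $(h_{\De_n}^{\theta^{(n)}})$ is $1$-unconditional in $X$ (equal $|f|$-distributions give equal norms), but the supports $\De_n^*$ typically have different measures, so already $h_{\De_1}^{\theta^{(1)}}$ and $h_{\De_2}^{\theta^{(2)}}$ need not be equidistributed and the sequence is \emph{not} symmetric. Without symmetry, Rosenthal's theorem only hands you a weakly Cauchy subsequence, and unconditionality alone does not upgrade weakly Cauchy to weakly null; your exclusion of the $\ell_1$ alternative is also only sketched.

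The paper's fix bypasses this entirely: after passing to successive $\De_n$, each $h_{\De_n}^{\theta^{(n)}}$ is $\{-1,0,1\}$-valued and can be written as $\tfrac{1}{2}(s_n+s_n')$, where $(s_n)_n$ and $(s_n')_n$ are each sequences of independent mean-zero $\{-1,1\}$-valued random variables (extend $h_{\De_n}^{\theta^{(n)}}$ to $\pm1$ on $[0,1)\setminus\De_n^*$ in two opposite ways, refined enough to keep independence along the sequence). Each such sequence is distributionally equivalent to $(r_n)$ and hence weakly null by \ref{nice RI class 1} together with the Remark following Definition~\ref{nice RI class}; so is their average. This gives part (a) directly, and part (b) is the $w^*$-null observation above.
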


\begin{proof}
The result is an immediate consequence of the following fact: let $(\De_k)$ be a sequence of finite disjoint collections of $\mathcal{D}^+$ with $\lim_k\min\iota(\De_k) = \infty$ and for every $k\in\N$ let $\theta_k\in\{-1,1\}^{\De_k}$.
\begin{enumerate}[leftmargin=19pt,label=(\alph*)]

\item The sequence $(h_{\De_k}^{\theta_k})_k$ is weakly null.

\item\label{far enough acts small ii} The sequence $(h_{\De_k}^{\theta_k})_k$ is a bounded block sequence in $X^*$ and thus it is $w^*$-null.

\end{enumerate}
There is nothing further to say about statement (b). We now explain how statement (a) is achieved. Note that any sequence of independent $\{-1,1\}$-valued random variables of mean $0$
is
distributionally equivalent to $(r_n)_n$ and thus weakly null. Any sequence as in statement (a) has a subsequence
  which is of the form $(\frac{r_n+r'_n}2)$, where $(r_n)$ and $(r'_n)$ are both sequences of independent $\{-1,1\}$-valued random variables of mean $0$.
  Thus, it is weakly null as well.
\end{proof}

We carry out the first step towards the proof of Theorem \ref{arbitrary to X-diagonal}

\begin{prop}
\label{pass to mead}
Let $X$ be in $\cH^{*}$ and denote by $C$ the set of all pairs $(g,f)$ in $B_{X^*}\times B_X$ so that $g$ and $f$ have  finite  and disjoint supports with respect to the Haar system. Then, every bounded linear operator $T:L_1(X)\to L_1(X)$ is a 1-projectional factor of a bounded linear operator $S:L_1(X) \to L_1(X)$ so that the family $\{S^{(f,g)}:(f,g)\in C\}$ is uniformly eventually close to Haar multipliers.
\end{prop}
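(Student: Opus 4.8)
The plan is to construct, via the standard projectioning device of Example \ref{standard projectioning}, a faithful Haar system $(\widetilde h_I)_{I\in\mathcal D^+}$ in $L_1$ together with a faithful Haar system $(\widehat h_L)_{L\in\mathcal D^+}$ in $X$, chosen recursively so that the off-diagonal entries of the resulting conjugate operator $S$ become summably small when tested against finitely supported disjoint pairs. More precisely, fix in advance a summable family $(\e_{(L,M)})_{(L,M)\in\mathcal D^+\times\mathcal D^+}$ of positive reals with $\sum \e_{(L,M)}$ as small as desired. We recursively pick $\widehat h_\emptyset,\widehat h_{[0,1)},\widehat h_I,\dots$ (in the order of $\iota$), at each stage $I$ choosing $\widehat h_I$ to be a suitable block (Rademacher-type) vector of the Haar system on the appropriate set $[\widehat h_\emptyset\widehat h_{I_0}=\pm 1]$, with $\min\iota(\De_I)$ large enough relative to all previously chosen data, while simultaneously making the analogous choice for $\widetilde h_I$ in $L_1$. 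Let $Z=\overline{\langle\widetilde h_I\rangle}$, $W=\overline{\langle\widehat h_L\rangle}$, let $P\colon L_1(X)\to Z(X)$ be the norm-one projection and $A\colon L_1(X)\to Z(X)$ the onto isometry of Example \ref{standard projectioning}, and put $S=A^{-1}PTA$; then $T$ is a $1$-projectional factor of $S$ and $\langle h_L\otimes h_I, S(h_M\otimes h_J)\rangle=\langle\widehat h_L\otimes\widetilde h_I, T(\widehat h_M\otimes\widetilde h_J)\rangle$.

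The core of the argument is the recursive choice, and here the tool is Lemma \ref{far enough acts small} applied inside $X$. The point is that an entry $S^{(f,g)}$ for $(f,g)\in C$ is governed by inner products of the form $\langle \widehat h_M^{\,\theta}\,(\text{something}), T(\widehat h_L^{\,\theta'}\,(\text{something}))\rangle$, where $\widehat h_M^\theta$, $\widehat h_L^{\theta'}$ arise from finite disjoint pieces of the faithful Haar system; and because $X\in\cH^*$, the Rademacher sequence is weakly null, so by Lemma \ref{far enough acts small} choosing the supports of the later $\widehat h$'s far enough out forces these inner products below the prescribed $\e_{(L,M)}$'s, uniformly over the relevant finite sets. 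Concretely: before choosing $\widehat h_I$ (with $\iota(I)=k$) one has only finitely many previously chosen functions $\widehat h_J$, $\iota(J)<k$; applying Lemma \ref{far enough acts small} with $\mathscr T=\{T^{(x^*,x)}\}$ ranging over the operators attached to those finitely many pairs, and with $G$, $F$ the finite sets of relevant duals and vectors, yields an index $i_0$ so that any block supported beyond $i_0$ is tested small; one then picks $\widehat h_I$ (and $\widetilde h_I$) accordingly. Carrying this out along $\iota$ gives, for all $L\neq M$ in $\mathcal D^+$ and all $\theta,\theta'$, the estimate $|\langle h_M^{\theta}\otimes \widetilde h_I,S(h_L^{\theta'}\otimes\widetilde h_J)\rangle|\lesssim \e_{(L,M)}$; summing and invoking Lemma \ref{uechm in practice} (with the roles of the two coordinates appropriately bookkept) shows $\{S^{(f,g)}:(f,g)\in C\}$ is uniformly eventually close to Haar multipliers.

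The main obstacle, and the place where care is needed, is the bookkeeping that reduces a general finitely supported disjoint pair $(f,g)\in C$ to finitely many Rademacher-block inner products to which Lemma \ref{far enough acts small} applies: one must express $f$ and $g$ in the faithful Haar basis, note that their supports being disjoint and finite means only finitely many $\widehat h_L$ are involved, and then peel off the \emph{last} chosen generator so that the remaining vector lies in the span of earlier generators while the removed one is a far-out block relative to them. This is exactly the mechanism used in the sketch of Theorem \ref{icebreaker}, adapted to the two-sided (domain and range) requirement of Definition \ref{uechm def} and to the vector-valued setting; the off-diagonal-in-$L_1$ entries are handled by the $\widetilde h_I$ exactly as in that sketch, while the off-diagonal-in-$X$ entries are the genuinely new part and rely on $X\in\cH^*$. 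Once the recursion is set up so that each new generator is simultaneously far out in $L_1$ and far out in $X$ relative to everything chosen before, the estimates are routine applications of Lemma \ref{far enough acts small} and the triangle inequality.
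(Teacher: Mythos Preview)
Your proposal is essentially the paper's proof: recursively build two faithful Haar systems (one in $L_1$, one in $X$) via Lemma \ref{far enough acts small}, conjugate $T$ through them using Example \ref{standard projectioning}, and finish with Lemma \ref{uechm in practice}. The only point where you are slightly looser than the paper is the indexing of the errors: the paper fixes the four-index bound $\big|\big\langle \widetilde h_J\otimes \nu_M\widehat h_M, T\big(|J'|^{-1}\widetilde h_{J'}\otimes \mu_{M'}\widehat h_{M'}\big)\big\rangle\big|\leq 2^{-(\iota(J)+\iota(J')+\iota(M)+\iota(M'))}$ whenever $J\neq J'$ and $M\neq M'$, so that after summing over the (disjoint, finite) Haar supports of $g$ and $f$ the residual bound depends only on $(I,J)$ and is summable; your two-index $\e_{(L,M)}$ on the $X$-side would need to be upgraded to this four-index form to get uniformity over all $(g,f)\in C$, and the induction applies Lemma \ref{far enough acts small} twice per step (once in $L_1$ with the family $T^{(\cdot,\cdot)}$, once in $X$ with the family $T_{(\cdot,\cdot)}$) rather than once.
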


\begin{proof}
We will  inductively construct faithful Haar systems $(\widetilde h_I)_{I\in\mathcal{D}^+}$ and $(\widehat h_L)_{L\in\mathcal{D}^+}$. In each step $k$ of the induction we will define $\widetilde h_I$ and then $\widehat h_L$ with $k = \iota(I) = \iota(L)$ (i.e., $I=L$ but we separate the notation for clarity). These vectors are of the form $\widetilde h_I = \sum_{J\in\De_I}h_J$ and $\widehat h_L = \sum_{M\in\Ga_L}h_M$. The inductive assumption is the following.

For every $J,J',M,M'\in\mathcal{D}^+$ with $\iota(J') \neq \iota(J)\leq k$ and $\iota(M')\neq \iota(M)\leq k$ we have
\begin{gather}
\label{arbitrary to mead I}
\big|\big\langle \widetilde h_J\otimes \nu_M\widehat h_M, T\big(|J'|^{-1}\widetilde h_{J'}\otimes \mu_{M'} \widehat h_{M'}\big)\big\rangle\big| \leq 2^{-(\iota(J)+\iota(J')+\iota(M)+\iota(M'))}.
\end{gather}

We may start by picking $\widetilde h_\emptyset = \widehat h_\emptyset = h_\emptyset$. We now carry out the $k$'th inductive step. Let $I = L\in\mathcal{D}^+$ with $\iota(I) = \iota(L) = k$. We will apply Lemma \ref{far enough acts small} twice, once for $L_1$ and once for $X$. First, define the following finite sets.
\begin{gather*}
\mathscr{T}_1 = \{T^{(\nu_{M}\widehat h_{M},\mu_{M'}\widehat h_{M'})}:\iota(M),\iota(M')<k\}\subset\mathcal{L}(L_1)\\
G_1 = \{\widetilde h_J: \iota(J)<k\}\subset (L_1)^* \text{ and }F_1 = \{|J|\widetilde h_J: \iota(J)<k\}\subset L_1.
\end{gather*}
Use Lemma \ref{far enough acts small} to pick $\widetilde h_I$ so that
\[
\max_{g\in G_1,T\in\mathscr{T}_1}\big|\big\langle g,T(\widetilde h_I)\big\rangle\big| \leq 2^{-4k}\text{ and }\max_{f\in F_1,T\in\mathscr{T}_1}\big|\big\langle \widetilde h_I,T(f)\big\rangle\big| \leq |I|2^{-4k}.
\]
Next, we take the finite sets
\begin{gather*}
\mathscr{T}_2 = \{T_{(\widetilde h_{J},|J'|^{-1}\widetilde h_{J'})}:\iota(J),\iota(J')\leq k\}\subset\mathcal{L}(X)\\
G_2 = \{ \nu_M\widehat h_M: \iota(M)<k\}\subset (L_1)^* \text{ and }F_2 = \{\mu_M\widehat h_M: \iota(M)<k\}\subset X.
\end{gather*}
Use Lemma \ref{far enough acts small} to pick $\widehat h_M$ so that
\[
\max_{g\in G_2,T\in\mathscr{T}_2}\big|\big\langle g,T(\widehat h_L)\big\rangle\big| \leq \mu_L^{-1}2^{-4k}\text{ and }\max_{f\in F_2,T\in\mathscr{T}_2}\big|\big\langle \widehat h_L,T(f)\big\rangle\big| \leq |I|\nu_L^{-1}2^{-4k}.
\]
The inductive step is complete and it is straightforward to check that the inductive hypothesis is preserved.

Take the operator $S$ given in Example \ref{standard projectioning}. We will show that it has the desired property. Fix $g\in B_{X^*}$, $f\in B_X$ with $g = \sum_{M\in E}b_M\nu_M h_M$ and $f = \sum_{M\in F}a_M\mu_Mh_M$ so that $F,G$ are finite and disjoint. Then,
\[S^{(g,f)} = \sum_{M\in E}\sum_{M'\in F}b_Ma_{M'}S^{(M,M')}\]
and for $I\neq J\in\mathcal{D}^+$ we have
\begin{align*}
\big|\big\langle h_I , S^{(g,f)}(|J|^{-1}h_J) \big\rangle\big|\span\leq \sum_{M\in E}\sum_{M'\in F}\big|\big\langle h_I , S^{(M,M')}(|J|^{-1}h_J) \big\rangle\big|\\
&= \sum_{M\in E}\sum_{M'\in F}\big|\big\langle h_I\otimes(\nu_Mh_M) , S\big((|J|^{-1}h_J)\otimes(\mu_{M'}h_{M'})\big) \big\rangle\big|\\
&= \sum_{M\in E}\sum_{M'\in F}\big|\big\langle\widetilde h_I\otimes(\nu_M\widehat h_M) , T\big((|J|^{-1}\widetilde h_J)\otimes(\mu_{M'}\widehat h_{M'})\big) \big\rangle\big|\\
&\leq \sum_{M\in E}\sum_{M'\in F}2^{-(\iota(J)+\iota(J')+\iota(M)+\iota(M'))}\leq 2^{-(\iota(I)+\iota(J))}=:\e_{(I,J)}.
\end{align*}
By Lemma \ref{uechm in practice}, the family under consideration is uniformly eventually close to Haar multipliers.
\end{proof}

\begin{rem}
Proposition \ref{pass to mead} can be achieved if we merely assume that $X$ is  a Haar  system space as condition \ref{nice RI class 1} of Definition \ref{nice RI class} can be replaced with a probabilistic argument.
We presented the slightly simpler proof that assumes \ref{nice RI class 1}. 
\end{rem}

We now eliminate the off-diagonal entries to obtain an $X$-diagonal operator that projectionally factors through $T$.
\begin{proof}[Proof of Theorem \ref{arbitrary to X-diagonal}]
By Proposition \ref{pass to mead}, $T$ is a 1-projectional factor of an $S:L_1(X)\to L_1(X)$ that satisfies  condition (ii) of Theorem \ref{fundamental lemma}.

For a  finite pairwise disjoint collection $\Gamma\subset \cD^+$  we define $\Gamma(n)=\{ D\in\cD_n: D\subset\Gamma^* \}$. Note that  $\Gamma(n) $ is a partition of
 $\Gamma^*$ for large enough $n$. Also note that from our condition $(*)$ it follows that  for  two finite subsets $\Gamma,\Gamma'$ of $\cD$, with $\Gamma^*\cap (\Gamma')^*=\emptyset$, the set 
 $$C^{(\Gamma, \Gamma')} =\{ (\nu_{(\Ga')^*} h_{\Ga'(n)}, \mu_{\Ga^*} h_{\Ga}): n\in\N\} \cup \{(\nu_{\Ga^*} h_{\Ga}, \mu_{(\Ga')^*} h_{\Ga'(n)}):n\in\N\big\}$$
 satisfies condition (i) of Theorem \ref{fundamental lemma}.
 The following claim will be the main step towards recursively defining an appropriate  faithful Haar system  $(\tilde h_L)$.
 
 \noindent{\em Claim.} There are  $\mathscr{A}\subset [\cD]$, with $|\mathscr{A}|>1 -\eta$, and  $\mathscr{U}\in[\N]^\infty$, so that
 \begin{align*}
\lim_{n\in\mathscr{U}}S^{(\nu_{(\Ga')^*} h_{\Ga'(n)}, \mu_{\Ga^*} h_{\Ga})}P_\mathscr{A} &=0\text{ and }
\lim_{n\in\mathscr{U}}S^{(\nu_{\Ga^*} h_{\Ga}, \mu_{(\Ga')^*} h_{\Ga'(n)})}P_\mathscr{A} &=0
\end{align*}
 with respect to the operator norm, for all $\Gamma,\Gamma'\subset \cD$, with $\Gamma^*\cap (\Gamma')^*=\emptyset$.
 
 In order to show the claim we choose for each  pair $(\Ga,\Ga')$,  with $\Gamma,\Gamma'\subset \cD$ being finite, $\eta_{(\Gamma,\Gamma')}>0$.
    with $\sum \eta_{(\Gamma,\Gamma')}<\eta$. Then, using Theorem \ref{fundamental lemma}, we choose
  a closed set $\mathscr{A}_{(\Ga,\Ga')}$ in $[\mathcal{D}^+]$ with $|\mathscr{A}_{(\Ga,\Ga')}| > 1-\eta_{(\Ga,\Ga')}$, so that $\{S^{(g,f)}P_{\mathscr{A}_{(\Ga,\Ga')}}:(g,f)\in C^{(\Ga,\Ga')}\}$ is relatively compact in the operator norm topology. Put $\mathscr{A} = \cap\mathscr{A}_{(\Ga,\Ga')}$ and note that $|\mathscr{A}|>1-\eta$ and that for each $(\Ga,\Ga')$ we still have that $\{S^{(g,f)}P_{\mathscr{A}}:(g,f)\in C^{(\Ga,\Ga')}\}$ is relatively compact.

Via a Cantor diagonalization find $\mathscr{U}\in[\N]^\infty$ so that for every pair $(\Ga,\Ga')$ both limits
\begin{align*}
 S^{(\Ga,\Ga')}_1\!\!:=\lim_{n\in\mathscr{U}}S^{(\nu_{(\Ga')^*} h_{\Ga'(n)}, \mu_{\Ga^*} h_{\Ga})}P_\mathscr{A}\text{ and }
S^{(\Ga,\Ga')}_2\!\!:=\lim_{n\in\mathscr{U}}S^{(\nu_{\Ga^*} h_{\Ga}, \mu_{(\Ga')^*} h_{\Ga'(n)})}P_\mathscr{A}
\end{align*}
exist with respect to the operator norm. As we will now see right away, $S^{(\Ga,\Ga')}_1 = S^{(\Ga,\Ga')}_2 = 0$. Indeed, for any $g\in L_\infty$ and $f\in L_1$ we have
\begin{align*}
\langle g, S^{(\Ga,\Ga')}_2f\rangle &= \lim_{n\in\mathscr{U}} \big\langle g, S^{(\nu_{\Ga^*} h_{\Ga}, \mu_{(\Ga')^*} h_{\Ga'(n)})}\big(P_\mathscr{A}f)\big)\big\rangle\\
 &= \lim_{n\in\mathscr{U}} \big\langle g\otimes (\nu_{\Ga^*}h_\Ga), S\big((P_\mathscr{A}f)\otimes (\mu_{(\Ga')^*}h_{\Ga'(n)})\big)\big\rangle = 0
\end{align*}
because $(h_{\Ga'(n)})_n$ is weakly null in $X$, by   Lemma \ref{far enough acts small}. With the same computation, $S^{(\Ga,\Ga')}_2 = 0$ because $(h_{\Ga'(n)})_n$ is $w^*$-null in $X^*$. This finishes the proof of the claim.

We now choose  inductively a faithful Haar system $(\widetilde h_L)_{L\in\mathcal{D}^+}$ so that for every $L\neq M\in\mathcal{D}^+$ we have 
\begin{equation}\label{what we want}\|S^{(\nu_L\widetilde h_L,\mu_M\widetilde h_M)}P_\mathscr{A}\| \leq \e 2^{-(\iota(L)+\iota(M))}.\end{equation} 
Assume $M\in\cD$ and  $\widetilde h_L= h_{\Gamma_L}$, has been chosen for all $L\in \cD^+$ with $\iota(L)<\iota(M)$, ($\widetilde h_\emptyset= h_\emptyset$ and 
$\widetilde h_{(0,1)}= h_{[0,1)}$ by definition). Without loss of generality we can assume that $M=K^+$ for some $K\in\cD$ with $\iota( K)<\iota(M)
 $. Thus we will choose $\Gamma_M$ so that 
 $\Gamma^*_M= [\widetilde h_K=1]$. For large enough $n_0\in\N$  it follows that  $[\widetilde h_K=1]  =(\Gamma' )^* $ for some $\Gamma'\subset \cD_{n_0}$.
 Then  we can use our claim that for large enough $n>0$, we  let $\Gamma_M= \Gamma'(n)$
  we deduce \eqref{what we want} for all $L\in \cD$,  with $\iota(L)<\iota(M)$

 Apply Proposition \ref{gggame} to find a subspace $Z$ of $Y_\mathscr{A}$ (i.e., in the image of $P_\mathscr{A}$) that is 1-complemented in $L_1$ via $P:L_1\to Z$ and isometrically isomorphic to $L_1$ via $A:L_1\to Z$. Let also $W$ be the closed linear span of $(\widetilde h_L)_{L\in\mathcal{D}^+}$ in $X$, let $Q:X\to W$ be the canonical 1-projection, and $B:X\to W$ be the canonical onto isometry.

By Proposition \ref{L_1(X) complemented in L_1(X)} the operator $R = ((A^{-1}P)\otimes(B^{-1}Q))S(A\otimes B)$ is a 1-projectional factor of $S$, and thus also of $T$. It remains to see that $R$ is $\e$-close to an $X$-diagonal operator. Fix $L\neq M$. To compute the norm of $R^{(L,M)}$ we also fix $g\in B_{L_\infty}$ and $f\in B_{L_1}$.
\begin{align*}
\big|\big\langle g, R^{(L,M)} f\big\rangle\big| &= \big|\big\langle g\otimes(\nu_Lh_L), R (f\otimes\mu_Mh_M)\big\rangle\big|\\
&= \big|\big\langle \underbrace{P^*A^{-1*}g}_{:=v\in B_{L_\infty}}\otimes\underbrace{(Q^*B^{-1*}\nu_Lh_L)}_{=\nu_L\widetilde h_L}, S (\!\!\!\!\!\underbrace{Af}_{=: u\in B_{Y_\mathscr{A}}}\!\!\!\!\otimes \underbrace{B\mu_Mh_M}_{=\mu_M\widetilde h_M})\big\rangle\big|\\
& = \big|\big\langle v, S^{(\nu_L\widetilde h_L,\mu_M\widetilde h_M)} (P_\mathscr{A}u)\big\rangle\big| \leq \e 2^{-(\iota(L)+\iota(M))}.
\end{align*}
By Remark \ref{in practice close to X-diagonal}, $R$ is $\e$-close to an $X$-diagonal operator.
\end{proof}

\section{Stabilizing entries of $X$-diagonal operators}
Once we have an $X$-diagonal operator at hand we can pass to another $X$-diagonal operator whose entries are stable in an extremely strong sense.
\begin{thm}
\label{stable entries}
Let $X$ be in  $\cH^{**}$  and let $T:L_1(X)\to L_1(X)$ be a bounded $X$-diagonal operator. Then, for any collection of positive real numbers $(\e_L)_{L\in\mathcal{D}^+}$, $T$ is a $1$-projectional factor of an operator $S:L_1(X)\to L_1(X)$ with the following properties:
\begin{enumerate}[leftmargin=20pt,label=(\alph*)]

\item $S$ is $X$-diagonal with entries $(S^L)_{L\in\mathcal{D}^+}$ and

\item for every $L,M\in\mathcal{D}^+$ with $L\subset M$ we have $\|S^L - S^M\| \leq \e_M$.

\end{enumerate}

\end{thm}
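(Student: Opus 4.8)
The plan is to split the proof into two reductions, joined by the transitivity of $1$-projectional factors (Proposition~\ref{approximate factor transitive}). First I would replace the given $X$-diagonal operator $T$ by an $X$-diagonal $1$-projectional factor $R$ whose diagonal family $\{R^L:L\in\cD^+\}$ is uniformly eventually close to Haar multipliers (a ``reduced'' $X$-diagonal operator); then I would replace $R$ by the desired $S$ by stabilizing the entries along branches.

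\textbf{Step 1 (passing to a reduced $X$-diagonal operator).} Since $X\in\cH^{**}$, the set $\{\mu_Lh_L:L\in\cD^+\}$ contains no subsequence equivalent to the $\ell_1$-basis, so Theorem~\ref{relatively compact orbit}, applied to $T$ with this set in the role of $B$, shows that for each $f\in L_1$ the orbit $\{T^Lf:L\in\cD^+\}$ is uniformly integrable in $L_1$. Using this, together with the fact that deep blocks of the Rademacher system are weakly null in $L_1$ and $w^*$-null in $L_\infty$ (Lemma~\ref{far enough acts small} applied in the $L_1$-coordinate), I would recursively construct a faithful Haar system $(\widetilde h_I)_{I\in\cD^+}$ in the $L_1$-coordinate --- in the spirit of Proposition~\ref{pass to mead}, but now dealing with the \emph{diagonal} blocks $T^L$ --- so that, for a prescribed summable family $(\e_{(I,J)})$,
\[
\sup_{L\in\cD^+}\bigl|\bigl\langle\widetilde h_I,\;T^L\bigl(|J|^{-1}\widetilde h_J\bigr)\bigr\rangle\bigr|\le\e_{(I,J)}\qquad(I\neq J).
\]
The subtle point is that a single off-diagonal deep block $\widetilde h_J$ must be almost annihilated by $\langle\,\cdot\,,T^L(\,\cdot\,)\rangle$ \emph{simultaneously for all} $L\in\cD^+$, and since $(T^L)^*\widetilde h_I$ may oscillate at every scale this cannot be arranged by weak-nullity alone; this is precisely where the concentration-of-measure estimates of Lemma~\ref{concentration splitting} and its vector-valued companion Lemma~\ref{concentration splitting vector valued} enter: one selects the defining signs of $\widetilde h_J$ by the empirical-distribution method, the sub-Gaussian large deviation bound handles all $L$ at once, and the uniform integrability of the orbits is exactly what collapses the infinite family of constraints to an essentially finite one. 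Passing to the $1$-projectional factor $R$ of Example~\ref{standard projectioning} (altering only the $L_1$-coordinate), $R$ is again $X$-diagonal with $R^L=A^{-1}PT^LA$, and Lemma~\ref{uechm in practice} shows $\{R^L:L\in\cD^+\}$ is uniformly eventually close to Haar multipliers.

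\textbf{Step 2 (stabilizing the entries).} Now I would apply the Fundamental Lemma (Theorem~\ref{fundamental lemma}) to the family $\{R^L:L\in\cD^+\}$: given $\eta>0$ it produces a closed $\mathscr A\subset[\cD^+]$ with $|\mathscr A|>1-\eta$ such that $\{R^LP_\mathscr{A}:L\in\cD^+\}$ is relatively norm-compact in $\mathcal L(L_1)$. Using Proposition~\ref{gggame} (or Example~\ref{substandard projectioning}) pick a subspace of $Y_\mathscr{A}$ isometric to and $1$-complemented in $L_1$, and conjugate $R$ by it in the $L_1$-coordinate to obtain an $X$-diagonal $1$-projectional factor $R'$ of $R$ whose entries $(R')^L$ now form a relatively norm-compact subset $\mathcal K$ of $\mathcal L(L_1)$. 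Finally I would build, recursively in the $\iota$-order, a faithful Haar system $(\widehat h_L)_{L\in\cD^+}$ in the $X$-coordinate, $\widehat h_L=h_{\De_L}^{\theta_L}$ with $\De_L$ a disjoint collection of dyadic intervals whose union is the region already fixed for $L$ and whose scale is far finer than that of $\De_M$ for every ancestor $M\supsetneq L$ (so that $S$ comes out exactly $X$-diagonal). With $S$ the $1$-projectional factor obtained by conjugating $R'$ by this system (Example~\ref{standard projectioning}, Proposition~\ref{L_1(X) complemented in L_1(X)}), one computes that $S$ is $X$-diagonal with entry $S^L=\frac{1}{|L|}\sum_{N\in\De_L}|N|\,(R')^N$, a convex combination of the entries of $R'$ over $\De_L$; it therefore suffices to choose the intervals $\De_L$ and the signs $\theta_L$ so that $\|S^L-S^{\mathrm{parent}(L)}\|$ is as small as we wish at each step, which telescopes to $\|S^L-S^M\|\le\e_M$ for $L\subset M$ once the per-step errors are chosen branch-summably in terms of $(\e_M)$. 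The stabilization itself is arranged as follows: when $\widehat h_K$ is built, one chooses the sign pattern --- equivalently, the balanced partition of $\De_K$ that determines the two children's regions --- so that the averages of $(R')^N$ over each half agree with the average over all of $\De_K$ (an empirical-average-versus-true-average estimate, dispatched by Lemma~\ref{concentration splitting}/\ref{concentration splitting vector valued}), and when one refines to a deeper level one again uses the concentration inequalities, together with the norm-compactness of $\mathcal K$, to steer the empirical distribution of $\{(R')^N:N\subset\De_L^*,\ |N|=2^{-n}\}$ --- region-independently, as $n\to\infty$ --- toward a single limiting distribution, so that the convex combinations propagate essentially unchanged down every branch.

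\textbf{Main obstacle.} The real difficulty is not the (heavy) bookkeeping with faithful Haar systems, but the two places where one random choice must work for the \emph{entire infinite} family of entries at once: killing every off-diagonal interaction of the diagonal blocks in Step~1, and equating every branch-average in Step~2. In both cases the resolution is the same mechanism --- relative compactness (weak in Step~1, norm in Step~2) of the operator family reduces infinitely many constraints to essentially finitely many, and the sub-Gaussian concentration inequalities of the empirical-distribution method then handle those few --- and getting the quantitative interplay right, between the size of an $\e$-net of the compact family, the level $n$ of the Rademacher blocks, and the prescribed errors $\e_M$, is the technical heart of the argument.
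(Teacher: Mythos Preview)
Your two-step outline matches the paper's architecture (Proposition~\ref{entries uniformly close to multipliers} followed by Proposition~\ref{compact entries to near-tensor}), and your Step~2 is essentially correct and close to what the paper does. The gap is in Step~1.

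You propose to obtain the reduced $X$-diagonal operator by altering \emph{only} the $L_1$-coordinate: build a faithful Haar system $(\widetilde h_I)$ and set $R^L=A^{-1}PT^LA$, choosing the signs of each $\widetilde h_J$ by concentration so that $\sup_L|\langle\widetilde h_I,T^L(|J|^{-1}\widetilde h_J)\rangle|\le\e_{(I,J)}$. The claim that ``uniform integrability of the orbits collapses the infinite family of constraints to essentially finitely many'' is where the argument breaks. Weak relative compactness of $\{T^Lf:L\in\cD^+\}$ in $L_1$ does \emph{not} yield a finite $\e$-net in any sense that the concentration lemmas can exploit: Lemma~\ref{concentration splitting vector valued} requires norm-compactness of the target set, and the scalar Lemma~\ref{concentration splitting} gives a failure probability of order $1/N$ per constraint, so a union bound over infinitely many $L$ is hopeless. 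A concrete obstruction: take $T^Lf=r_{\iota(L)}$, which is a uniformly integrable family, yet no single deep Rademacher block is almost orthogonal to all of them. So neither weak-nullity nor random signs alone can beat all $L$ at once.

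The paper's resolution is to change \emph{both} coordinates simultaneously. In step $k$ one picks the $L_1$-vector $\widetilde h_I$ with $\iota(I)=k$ \emph{and} an entire level $\{\widehat h_L:L\in\cD_{k-2}\}$ in the $X$-coordinate. The new entries become averages $S^L=T^{\Ga_L}=\sum_{M\in\Ga_L}(|M|/|\Ga_L^*|)T^M$. Concentration (Lemma~\ref{concentration splitting vector valued}) is applied in the $X$-coordinate, to the finite-rank truncations $P_{\alpha_k}T^{\Ga}_\infty P_{\alpha_k}$, to choose the signs of $\widehat h_L$ so that $P_{\alpha_k}(T^{E_L}_\infty-T^{\Ga_L^\pm}_\infty)P_{\alpha_k}$ is small. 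The point is that this makes the sequence $(P_{\alpha_k}T^{\Ga_L}P_{\alpha_k})_L$ telescope along every branch with controlled increments; consequently $\widetilde h_I$ only has to be chosen (via Lemma~\ref{far enough acts small}) against the \emph{finite} set $\{T^{\Ga_M}:M\in\cD^{k-3}\}\cup\{T^{E_L}_\infty:L\in\cD_{k-2}\}$, and the uniform bound for all deeper $L$ follows from the telescoping estimate. In short, the infinite family is not tamed by compactness of $\{T^Lf\}$ but by replacing it with a new family whose variation along branches is controlled step by step.
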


Th above theorem is proved in two steps. The first one is to pass, from an arbitrary $X$-diagonal operator, to another one whose entries are uniformly eventually close to Haar multipliers. This is perhaps the most challenging part of the entire process. For presentation purposes we momentarily skip this. Instead, we describe the step that follows it, which is the strong stabilization of the entries, given the uniform eventual proximity to Haar multipliers. This is based on  the Fundamental Lemma (Theorem \ref{fundamental lemma}) and a simple concentration inequality. This proof also serves as an icebreaker for the proof of the first step which is presented afterwards in this section.

\begin{prop}
\label{compact entries to near-tensor}
Let $X$ be in $\cH^{**}$ and let $T:L_1(X)\to L_1(X)$ be a bounded $X$-diagonal operator. Assume that the set of entries $\{T^L:L\in\mathcal{D}^+\}$ of $T$ is uniformly eventually close to Haar multipliers. Then, for any collection of positive real numbers $(\e_L)_{L\in\mathcal{D}^+}$, $T$ is a $1$-projectional factor of an $X$-diagonal operator $S:L_1(X)\to L_1(X)$ so that for every $L,M\in\mathcal{D}^+$ with $L\subset M$ we have $\|S^L - S^M\| \leq \e_M$.
\end{prop}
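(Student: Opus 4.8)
The plan is to combine the Fundamental Lemma with a single recursive construction of a faithful Haar system in $X$ (paired with a $1$-complemented copy of $L_1$ sitting inside a large invariant subspace of $L_1$), arranged so that the entries of the resulting projectional factor become \emph{block averages} of the original entries $T^M$, which can then be made to stabilize along decreasing chains of dyadic intervals. First, apply Theorem~\ref{fundamental lemma} to the family $\{T^L:L\in\cD^+\} = \{T^{(\nu_Lh_L,\mu_Lh_L)}:L\in\cD^+\}$, with $A=\{\nu_Lh_L\}\subset X^*$, $B=\{\mu_Lh_L\}\subset X$ and $C=\{(\nu_Lh_L,\mu_Lh_L):L\in\cD^+\}$: hypothesis~(i) holds because $X\in\cH^{**}$, i.e. no subsequence of $(\mu_Lh_L)$ is equivalent to the $\ell_1$ basis, and hypothesis~(ii) is the assumption on $T$. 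This yields, for any prescribed $\eta>0$, a closed set $\mathscr A\subset[\cD^+]$ with $|\mathscr A|>1-\eta$ such that $\mathcal K:=\overline{\{T^LP_{\mathscr A}:L\in\cD^+\}}$ is norm-compact in $\mathcal L(L_1)$.

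Next I would record the effect on the entries of projectionally factoring an $X$-diagonal operator through faithful Haar systems. Pick, by Proposition~\ref{gggame}, a subspace $Z\subset Y_{\mathscr A}$ that is isometric to $L_1$ via $A\colon L_1\to Z$ and $1$-complemented in $L_1$ via $P\colon L_1\to Z$; then $P_{\mathscr A}A=A$. Let $(\widehat h_L)_{L\in\cD^+}$ be a faithful Haar system in $X$ with blocks $\widehat h_L=h_{\Ga_L}^{\sigma_L}$ chosen so that the generations of the $\Ga_L$ strictly increase down the faithful tree, hence the $\Ga_L$ are pairwise disjoint; let $W=\overline{\langle\widehat h_L:L\in\cD^+\rangle}\subset X$, with canonical isometry $B\colon X\to W$, $Bh_L=\widehat h_L$, and norm-one projection $Q\colon X\to W$. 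By Proposition~\ref{L_1(X) complemented in L_1(X)}, $S:=(A^{-1}P\otimes B^{-1}Q)\,T\,(A\otimes B)$ is a $1$-projectional factor of $T$. Writing $\Phi(U)=A^{-1}PUA$ for $U\in\mathcal L(L_1)$, a direct computation using $X$-diagonality of $T$, $P_{\mathscr A}A=A$, pairwise disjointness of the $\Ga_L$, and $\langle h_N,h_M\rangle=|M|\,\delta_{N,M}$, shows that $S$ is again $X$-diagonal, with entries
\[
S^L\;=\;\frac{1}{|\Ga_L|}\sum_{M\in\Ga_L}\Phi\bigl(T^MP_{\mathscr A}\bigr)\ \in\ \overline{\mathrm{conv}}\,\Phi(\mathcal K).
\]
Thus each entry $S^L$ is an average of the relatively compact family $\Phi(\mathcal K)$ over the dyadic intervals $M$ partitioning the (already determined) support $\Ga_L^*=[\widehat h_\emptyset\widehat h_{L'}=\pm1]$ of $\widehat h_L$, where $L'$ is the parent of $L$.

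It then remains to choose the blocks $\Ga_L$ recursively so that $\|S^L-S^M\|\le\e_M$ whenever $L\subset M$. At the step defining $\widehat h_L$ the support $\Ga_L^*$ is forced, but the generation of $\Ga_L$ (how finely it partitions $\Ga_L^*$) and the signs $\sigma_L$ are free. The idea is to carry along each branch of the faithful tree a target operator in $\overline{\mathrm{conv}}\,\Phi(\mathcal K)$ and to ensure, by passing to a sufficiently fine generation, that the new block average $S^L$ lands within a preassigned tolerance of the parent's target. This is exactly where the concentration inequalities of Lemma~\ref{concentration splitting} and Lemma~\ref{concentration splitting vector valued} enter: an average of members of the relatively compact family $\Phi(\mathcal K)$ over a large, fine partition of a prescribed subset of the parent's support cannot drift away from the parent's average by more than a quantity that tends to $0$ with the generation (compactness bounds the covering numbers; the concentration estimate controls how balanced fine sub-partitions reproduce the global average). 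Running the recursion with a summable schedule of tolerances assigned to the levels then upgrades parent--child stability to $\|S^L-S^M\|\le\e_M$ for all $L\subset M$, not merely immediate descendants, which is the assertion of Proposition~\ref{compact entries to near-tensor}.

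The main obstacle is this last step. Since a faithful Haar block must partition a support inherited from the earlier choices, one cannot simply cluster the operators $T^MP_{\mathscr A}$ so that the entries become constant along branches; one is forced to average over an unstructured family of small dyadic intervals, and it is precisely here that a concentration estimate, rather than mere compactness, is indispensable. Making the recursion interact correctly with the concentration inequality — choosing the generations large enough, the signs so that the forced bisections of $\Ga_L^*$ are essentially unbiased, and the tolerances summable across levels — is the delicate part of the argument.
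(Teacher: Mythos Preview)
Your proposal is correct and follows essentially the paper's approach (Fundamental Lemma for compactness, a faithful Haar system in $X$ yielding block-average entries, concentration for the sign-choices governing the bisections, telescoping for chain stability). The paper makes one step explicit that you leave implicit: before the recursion one passes to a subsequence $\mathscr U\subset\N$ so that the operator-norm limits $S^\Gamma_\infty:=\lim_{n\in\mathscr U}S^{\Gamma(n)}$ exist for every finite disjoint $\Gamma\subset\cD^+$, and these limits serve as your ``target operators''---the concentration lemma is applied to $M\mapsto S^{\{M\}}_\infty$ so that a good sign-choice forces $S^{\Gamma_L^\pm}_\infty$ close to $S^{E_L}_\infty$, while refining the generation along $\mathscr U$ makes the actual entry $S^{E_L}_{n_L}$ close to its own limit.
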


We start with the probabilistic component required in the proof.

\begin{lem}
\label{concentration splitting}
Let $N\in\N$, $M\geq 0$, $\Omega$ be a uniform probability space with $2N$ elements, and let $\Omega = \sqcup_{n=1}^N\{\omega_n^{-1},\omega_n^1\}$ be a partition of $\Omega$ into doubletons. For a function $G:\Omega\to [-M,M]$ define $\Phi: \{-1,1\}^N\to [-M,M]$ given by
\begin{equation}
\label{splitting sums}
\Phi(\e) = \frac{1}{N}\sum_{n=1}^NG(\omega_n^{\e_n}).
\end{equation}
Then, $\mathbb{E}(\Phi) = \mathbb{E}(G)$ and $\mathrm{Var}(\Phi) \leq M^2/N$, where on $\{-1,1\}^N$ we also consider the uniform probability measure. In particular, for any $\eta >0$,
\begin{equation}
\mathbb{P}\Big(\Big|  \Phi - \mathbb{E}(G)\Big| \geq \eta \Big) \leq\frac{M^2}{N\eta^2}. 
\end{equation}
\end{lem}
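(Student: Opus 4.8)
The plan is to recognize $\Phi$ as a normalized sum of independent bounded random variables and then apply Chebyshev's inequality; none of the three assertions requires more than elementary probability. Concretely, for $\e\in\{-1,1\}^N$ set $Y_n(\e) = G(\omega_n^{\e_n})$, so that $\Phi = \frac1N\sum_{n=1}^N Y_n$. Under the uniform probability measure on $\{-1,1\}^N$ the coordinates $\e_1,\dots,\e_N$ are independent and uniform on $\{-1,1\}$; since $Y_n$ depends on $\e$ only through the coordinate $\e_n$, the variables $Y_1,\dots,Y_N$ are independent, and each satisfies $|Y_n|\le M$.

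For the first claim I would compute the expectation by linearity: since $\e_n$ is uniform on $\{-1,1\}$,
\[
\mathbb{E}(\Phi) = \frac1N\sum_{n=1}^N \mathbb{E}(Y_n) = \frac1N\sum_{n=1}^N \frac{G(\omega_n^{-1})+G(\omega_n^{1})}{2} = \frac{1}{2N}\sum_{\omega\in\Omega}G(\omega) = \mathbb{E}(G),
\]
the penultimate equality being just the fact that $\Omega=\sqcup_{n=1}^N\{\omega_n^{-1},\omega_n^{1}\}$ is a uniform space on $2N$ points. For the variance, independence of the $Y_n$ gives $\mathrm{Var}(\Phi) = \frac{1}{N^2}\sum_{n=1}^N \mathrm{Var}(Y_n)$, and the crude bound $\mathrm{Var}(Y_n)\le \mathbb{E}(Y_n^2)\le M^2$ then yields $\mathrm{Var}(\Phi)\le M^2/N$.

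Finally, Chebyshev's inequality applied to $\Phi$, together with $\mathbb{E}(\Phi)=\mathbb{E}(G)$ from the first step, gives
\[
\mathbb{P}\big(|\Phi - \mathbb{E}(G)| \ge \eta\big) \le \frac{\mathrm{Var}(\Phi)}{\eta^2} \le \frac{M^2}{N\eta^2},
\]
which is the asserted concentration estimate. I do not expect a genuine obstacle here: the only points that need care are keeping the two probability spaces ($\Omega$ and $\{-1,1\}^N$) distinct in the notation, and observing that it is precisely the independence of the $Y_n$ — guaranteed because they are supported on disjoint blocks of coordinates — that makes the variance decay like $1/N$ rather than staying of order $M^2$.
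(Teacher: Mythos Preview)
Your proof is correct and follows essentially the same approach as the paper: decompose $\Phi$ as an average of independent random variables $Y_n(\e)=G(\omega_n^{\e_n})$, compute the expectation by linearity, bound the variance via independence, and conclude with Chebyshev. The only cosmetic difference is that the paper writes out the exact variance $\mathrm{Var}(Y_n)=\tfrac14(G(\omega_n^{-1})-G(\omega_n^{1}))^2$ before bounding it by $M^2$, whereas you use the cruder bound $\mathrm{Var}(Y_n)\le\mathbb{E}(Y_n^2)\le M^2$ directly.
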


\begin{proof}
For $1\leq n\leq N$ let $\Phi_n:\{-1,1\}^N\to [-M,M]$ given by $\Phi_n(\e) = G(\omega_n^{\e_n})$. This is an independent sequence of random variables and for each $n\in\N$ we have
\[\mathbb{E}(\Phi_n) = \frac{1}{2}\big(G(\omega_n^{-1}) + G(\omega_n^{1})\big)\text{ and }\mathrm{Var}(\Phi_n) = \frac{1}{4}\big(G(\omega_n^{-1}) - G(\omega_n^{1})\big)^2.\]
Then, $\mathbb{E}(\Phi) = (1/N)\sum_{n=1}^N\mathbb{E}(\Phi_n) = \mathbb{E}(G)$. By independence we obtain
\begin{equation*}
\mathrm{Var}(\Phi) = \frac{1}{N^2}\sum_{n=1}^N\mathrm{Var}(\Phi_n) \leq \frac{1}{N^2}N\frac{4M^2}{4}= \frac{M^2}{N\eta^2}. 
\end{equation*}
\end{proof}

\begin{lem}
\label{concentration splitting vector valued}
Let $K$ be a relatively compact subset of a Banach space. Then, for every $\e>0$ and $\eta>0$ there exists $N(K,\e,\eta)\in\N$ so that for every $N\geq N(K,\e,\eta)$ the following holds. For every uniform probability space $\Omega$ with $2N$ elements and partition $\Omega = \sqcup_{n=1}^N\{\omega_n^{-1},\omega_n^1\}$ into doubletons, for any function $G:\Omega\to K$, if we define $\Phi: \{-1,1\}^N\to \mathrm{conv}(K)$ given by
\[\Phi(\e) = \frac{1}{N}\sum_{n=1}^NG(\omega_n^{\e_n})\]
then $\mathbb{E}(\Phi) = \mathbb{E}(G)$ and 
\begin{equation}
\mathbb{P}\Big(\Big\|  \Phi - \mathbb{E}(G)\Big\| \geq \eta \Big) \leq\e. 
\end{equation}
\end{lem}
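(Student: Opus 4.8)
The plan is to deduce this vector-valued statement from the scalar Lemma~\ref{concentration splitting} by approximating $K$ by a finite-dimensional subspace and using a union bound over coordinates. Fix $\e,\eta>0$ and set $\delta=\eta/8$. Since $K$ is relatively compact it is totally bounded, so I would fix a finite $\delta$-net $y_1,\dots,y_m$ of $K$ together with a (not necessarily continuous) map $\pi\colon K\to\{y_1,\dots,y_m\}$ with $\|\pi(x)-x\|<\delta$ for all $x\in K$; no regularity of $\pi$ is needed since it will only ever be composed with functions defined on the finite set $\Omega$. Let $Y$ be the real linear span of $\{y_1,\dots,y_m\}$, regarded as a finite-dimensional real Banach space of dimension $d$ — viewing the ambient space as a real Banach space handles the complex-scalar case with no extra effort. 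I then fix an algebraic basis $(e_i)_{i=1}^d$ of $Y$ with real-linear coordinate functionals $(e_i^*)_{i=1}^d$, and put $b=\max_i\|e_i\|$, $L=\max_i\|e_i^*\|$, $M'=\max_j\|y_j\|$ (if $d=0$, i.e.\ $K\subset\delta B$, the claim is trivial since then $\|\Phi-\mathbb E(G)\|<2\delta<\eta$ always, so I may assume $d\ge1$).

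Given $\Omega$, its partition into doubletons, and $G\colon\Omega\to K$, I set $G'=\pi\circ G\colon\Omega\to\{y_1,\dots,y_m\}\subset Y$ and let $\Phi'$ be obtained from $G'$ by the same averaging formula \eqref{splitting sums}. Then $\Phi$ and $\Phi'$ differ pointwise by less than $\delta$, and $\|\mathbb E(G)-\mathbb E(G')\|<\delta$, so it suffices to control $\|\Phi'-\mathbb E(G')\|$. Writing $G'=\sum_{i=1}^d g_i e_i$ with $g_i=e_i^*\circ G'\colon\Omega\to\R$, one has $|g_i|\le LM'$ pointwise, $\Phi'=\sum_i\Phi_i e_i$ (pointwise) where $\Phi_i$ is exactly the average of $g_i$ as in \eqref{splitting sums}, and $\mathbb E(G')=\sum_i\mathbb E(g_i)e_i$.

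The identity $\mathbb E(\Phi)=\mathbb E(G)$ follows exactly as in the proof of Lemma~\ref{concentration splitting}, since $G$ has finite range and all the integrals in question are finite averages of vectors. For the probability bound, put $\eta'=\eta/(4db)$. By Lemma~\ref{concentration splitting} applied to each $g_i$ with $M=LM'$ we get $\mathbb P(|\Phi_i-\mathbb E(g_i)|\ge\eta')\le (LM')^2/(N(\eta')^2)$, so a union bound over $i=1,\dots,d$ gives $\mathbb P\big(\max_i|\Phi_i-\mathbb E(g_i)|\ge\eta'\big)\le d(LM')^2/(N(\eta')^2)$. On the complementary event, $\|\Phi'-\mathbb E(G')\|\le\sum_i|\Phi_i-\mathbb E(g_i)|\,\|e_i\|\le db\eta'=\eta/4$, hence $\|\Phi-\mathbb E(G)\|<2\delta+\eta/4=\eta/2<\eta$. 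Consequently the event $\{\|\Phi-\mathbb E(G)\|\ge\eta\}$ has probability at most $d(LM')^2/(N(\eta')^2)$, which is $\le\e$ once $N\ge d(LM')^2/(\e(\eta')^2)$; this last quantity is the sought constant $N(K,\e,\eta)$.

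There is no real obstacle here — the argument is a routine finite-dimensional reduction. The only point worth flagging is that the constants $d,b,L,M'$, and therefore $N(K,\e,\eta)$, depend on the chosen $\delta$-net of $K$; this is harmless because the statement explicitly permits $N$ to depend on $K$. (One could alternatively phrase the finite-dimensional step abstractly: any relatively compact $K$ is, up to an arbitrarily small perturbation, contained in a finite-dimensional subspace, on which the Banach-space norm is comparable to a coordinate sup-norm, reducing matters to finitely many applications of the scalar estimate.)
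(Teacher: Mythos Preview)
Your proof is correct. Both your argument and the paper's reduce to finitely many applications of the scalar Lemma~\ref{concentration splitting} together with a union bound, but the implementations differ. The paper takes an $\eta/3$-net $(k_i)_{i=1}^{d(K,\eta)}$ of $\mathrm{conv}(K\cup(-K))$ and picks norm-one functionals $f_i$ norming each $k_i$; the point is that whenever $\|\Phi-\mathbb{E}(G)\|\ge\eta$ some $f_i$ must witness a gap of at least $\eta/3$, and one applies the scalar lemma directly to $f_i\circ G$. No approximation of $G$ is needed, and the resulting bound $N(K,\e,\eta)\ge 9\,d(K,\eta)M^2/(\e\eta^2)$ depends only on the covering number and $M=\sup_{k\in K}\|k\|$. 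Your route instead first replaces $G$ by a nearby $G'$ with values in a finite set, then works in coordinates of the span; this is equally valid but introduces the extra approximation step and basis-dependent constants $b,L$ into the final estimate. Either approach is adequate for the purposes of the paper, since only the existence of some $N(K,\e,\eta)$ is ever used.
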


\begin{proof}
The statement $\mathbb{E}(\Phi) = \mathbb{E}(G)$ is proved exactly as in the scalar valued scenario and it is in fact independent of the choice of $N(K,\e,\eta)$. For the second part fix $\e,\eta>0$, and take a finite $\eta/3$-net $(k_i)_{i=1}^{d(K,\eta)}$ of the set $\mathrm{conv}(K\cup(-K))$. Fix norm-one functionals $(f_i)_{i=1}^{d(k,\eta)}$ so that for each $1\leq i\leq d(K,\eta)$ we have $f_i(k_i) = \|k_i\|$. In particular, for any $k_1,k_2\in \mathrm{co}(K)$ with $\|k_1 - k_2\| \geq \eta$ there exists $1\leq i\leq d(K,\eta)$ so that $|f_i(k_1) - f_i(k_2)| \geq \eta/3$. Also set $M = \sup_{k\in K}\|k\|$.

If we now fix $N$, $X$, $G$, and $\Phi$ as in the statement. For $1\leq i\leq d(K,\eta)$ put $G_i = f_i\circ G$ and $\Phi_i = f_i\circ \Phi$ then
\[\big\{ \omega : \|\Phi(\omega) - \mathbb{E}(G)\|>\eta \big\} \subset \bigcup_{1\leq i\leq d(K,\eta)}\big\{ \omega: |\Phi_i(\omega)- \mathbb{E}(G_i) | \geq \eta/3\big\}\]
and thus by Lemma \ref{concentration splitting} we have
\[\mathbb{P}\big(\|\Phi - \mathbb{E}(G)\|>\eta\big) \leq d(K,\eta)\frac{9M^2}{N\eta^2}.\]
Picking any $N(K,\e,\eta) \geq 9d(K,\eta)M^2/(\e\eta^2)$ completes the proof.
\end{proof}

\begin{rem}
\label{sign independence}
Let $X$ be and Haar  system space, $T:L_1(X)\to L_1(X)$ be an $X$-diagonal operator, and $\Gamma$ be a disjoint collection of $\mathcal{D}^+$. Then, for every $g\in L_\infty$, $f\in L_1$, and $\theta$ in $\{-1,1\}^{\Gamma}$ we have
\begin{align*}
\langle g , T^{(\nu_{\Ga^*}h_{\Ga}^\theta,\mu_{\Ga^*}h_\Ga^\theta)}f\rangle &= \langle g\otimes \nu_{\Ga^*} h_\Ga^\theta,T(f\otimes \mu_{\Ga^*}h_\Ga^\theta) \rangle\\
&=|\Ga^*|^{-1}\sum_{M\in\Ga}\sum_{L\in\Ga}\theta_M\theta_L\langle g\otimes h_M,T(f\otimes h_L)\rangle\\
&= |\Ga^*|^{-1}\sum_{M\in\Ga}\sum_{L\in\Ga}\theta_M\theta_L\langle g\otimes h_M,(T^Lf)\otimes h_L\rangle\\
&= |\Ga^*|^{-1}\sum_{M\in\Ga}\sum_{L\in\Ga}\theta_M\theta_L\langle g,T^Lf\rangle\langle h_M,h_L\rangle\\
& = \sum_{L\in\Ga} \big(|L|/|\Ga^*|\big)\langle g,T^Lf\rangle.
\end{align*}
In particular, the above expression does not depend on the choice of signs $\theta$, i.e., we may write
\begin{equation*}
T^\Ga:=T^{(\nu_{\Ga^*}h_{\Ga}^\theta,\mu_{\Ga^*}h_\Ga^\theta)} = \sum_{L\in\Ga}\big(|L|/|\Ga^*|\big)T^L.
\end{equation*}
\end{rem}

\begin{proof}[Proof of Proposition \ref{compact entries to near-tensor}]
  Since $X$ is in  $\cH^{**}$, the conditions of the Fundamental Lemma 
  (Theorem \ref{fundamental lemma}) are satisfied for $B=\{\mu_L h_L: L\in \cD^+\}$. Fix some  $\eta\in (0,1)$.
We apply the Fundamental Lemma to find  a closed subset $\mathscr{A}$ of $[\mathcal{D}^+]$ with $|\mathscr{A}|>1 - \eta$ and
 so that $\{T^{L}P_{\mathscr{A}}: L\in \cD^+\}$ is relatively compact. 
 By Proposition \ref{gggame} there exists a subspace $Z$ of $P_{\mathscr A}(L_1)$ that is isometrically isomorphic to $L_1$ via $A:L_1\to Z$ and 1-complemented in $L_1$ via $P:L_1\to Z$. The operator $T$ is a 1-projectional factor of $S = ((A^{-1}P)\otimes I)T(A\otimes I)$. and in fact for every $L\in \cD^+$ we have $S^{L} = A^{-1}PT^{L}A = A^{-1}PT^{L}P_{\mathscr{A}}A$. In particular, for every  set $\{S^L:L\in \cD^+\}$ is relatively compact.
 Let $K$ be the closed convex hull of $\{S^L:L\in \cD^+\}$, with respect to the operator norm.

As in the proof of Theorem \ref{arbitrary to X-diagonal}, for every finite disjoint collection $\Ga$ of $\mathcal{D}^+$  and $n\!\in\!\N$ define $\Ga(n) = \{L\in\mathcal{D}_n: L\subset \Ga^*\}$. 
For finitely many $n\in\N$, $\Ga(n)$ may be empty however eventually $\Ga^* = \Ga(n)^*$. 
Note that for $n$ sufficiently large so that $\Ga^*  = \Ga(n)^*$ we have
\begin{equation}\label{average}
S^\Gamma_{n} := S^{\Ga(n)} =  \frac{1}{\#\Gamma(n)}\sum_{L\in\Gamma(n)}S^L\in K.\end{equation}
By the relative compactness of $K$  pass to an infinite subset $\mathscr{U}$ of $\N$ so that for each disjoint collection $\Ga$ the limit $S^\Ga_\infty = \lim_{n\in \mathscr{U}}S^\Ga_{n}$ exists.  We point out for later that for any partition $\Ga = \Ga_1\sqcup \cdots\sqcup\Ga_k$ we have
\begin{equation}
\label{compact entries to near-tensor affine}
S^\Ga_\infty = \big(|\Ga_1^*|/|\Ga^*|\big)S^{\Ga_1}_\infty+ \cdots +\big(|\Ga_k^*|/|\Ga^*|\big)S^{\Ga_k}_\infty.
\end{equation}
Pick $(\de_L)_{L\in\mathcal{D}^+}$ so that for all $M\in\mathcal{D}^+$ we have $\sum_{L\subset M}\de_L \leq \e_M/3$. We will recursively define a faithful Haar system $(\hat h_L)_{L\in\mathcal{D}^+}$ so that each $\hat h_L = \sum_{M\in\Ga_L}\ze_Mh_M$ with $\Ga_L\subset \mathcal{D}_{n_L}$, with $n_L\in\mathscr{U}$. We will require that additional conditions are satisfied.

For each $L$ put $\Ga_L^+ = \{M\in\mathcal{D}_{n_L+1}:M\subset[\hat h_\emptyset\hat h_L = 1]\}$ and $\Ga_I^- = \{M\in\mathcal{D}_{n_L+1}:M\subset[\hat h_\emptyset\hat h_L = -1]\}$. In the case $L = \emptyset$ the set $\Ga_\emptyset^-$ is empty and we don't consider it, which is consistent with the fact that there is only one immediate successor of $\emptyset$ in $\mathcal{D}^+$. For each $L$ we define a disjoint collection $E_L$ of $\mathcal{D}^+$ with $E_L^* = \Ga_L^*$. This auxiliary collection $E_L$ will be chosen in the inductive step before $\Ga_L^*$ and in fact it will be used to choose the latter. If $L=\emptyset$ put $E_L = \{[0,1)\}$, if $L = [0,1)$ put $E_L = \Ga_\emptyset$, if $L = L_0^+$ put $E_L = \Ga_{L_0}^+$, and if $L = L_0^-$ put $E_L = \Ga_{L_0}^-$. Below are the additional requirements for each $L\in\mathcal{D}^+$.
\begin{enumerate}[leftmargin=23pt,label=(\roman*)]

\item\label{compact entries to near-tensor 4}  The set $\Ga_L$ is of the form $E_L(n_L)$.

\item\label{compact entries to near-tensor 5} $\|S^{E_L}_{n_L} - S_\infty^{E_L}\| \leq \de_{L}$.

\item\label{compact entries to near-tensor 6}  $\|S_\infty^{E_L} - S_\infty^{\Ga_L^+}\| \leq \de_L$ and $\|S_\infty^{E_L} - S_\infty^{\Gamma_L^-}\| \leq \de_L$.

\end{enumerate}
If we have achieved this construction we define 
\begin{align*}
Q&:L_1(X)\to L_1(X),  
   \text{ by }Q(f)=\!\! \sum_{J,M\in\mathcal{D}^+}\langle h_J\otimes \nu_M\hat h_M, f \rangle |J|^{-1}h_J\!\otimes\! \mu_M\hat h_M,\\
B&:L_1(X)\to L_1(X), \text{ by } B(h_I\otimes h_L)=h_I\otimes\hat h_L.\end{align*}

 Put $R = B^{-1}QSB$. It follows that $R$ is $X$-diagonal and, by Remark \ref{sign independence}, for each $L\in\mathcal{D}^+$ we have $R^L =  S^{E_L}_{n_L}$. Then, for each $L$ we have
\begin{equation}
\label{compact entries to near-tensor telescopic}
\begin{split}
\|R^L - R^{L^+}\|  &= \|S^{E_L}_{n_L} - S^{E_{L^+}}_{n_{L^+}}\| \leq \|S^{\Ga_L^+}_\infty - S^{E_{L^+}}_{n_{L^+}}\| + \|S^{E_L}_{n_L} - S^{\Ga_L^+}_\infty\|\\
&\leq\|S^{\Ga_L^+}_\infty - S^{E_{L^+}}_{n_{L^+}}\| + 2\de_L\text{ (by \ref{compact entries to near-tensor 5} \& \ref{compact entries to near-tensor 6})}\\
&\leq \| S^{\Ga_L^+}_\infty - S^{E_{L^+}}_\infty\| + \|S^{E_{L^+}}_\infty - S^{E_{L^+}}_{n_{L^+}}\| + 2\de_L\\
&\stackrel{\text{\ref{compact entries to near-tensor 5}}}{\leq}   \| S^{\Ga_L^+}_\infty - S^{E_{L^+}}_\infty\| + \de_{L^+} + 2\de_L = 2\de_L + \de_{L^+},
\end{split}
\end{equation}
because, by definition, $\Ga_L^+ = E_{L^+}$. Similarly, we deduce $\|R^L - R^{L^-}\| \leq 2\de_L + \de_{L^{-}}$. Also, using $S^{\Ga_\emptyset} = S^{\Ga_{[0,1)}}$ we deduce $\|R^\emptyset - R^{[0,1)}\| \leq 2\de_\emptyset$. By iterating this process, we may deduce that for every $L\subset M$ we have that $\|R^L - R^M\| \leq 3\sum_{N\subset M}\delta_N\leq \e_M$.

It remains to explain how we ensure that conditions \ref{compact entries to near-tensor 4}, \ref{compact entries to near-tensor 5}, and \ref{compact entries to near-tensor 6} are upheld. We start by putting $E_\emptyset = \{[0,1)\}$, by picking $n_\emptyset$ sufficiently large so that $\|S^{E_\emptyset}_{n_\emptyset} - S^{E_\emptyset}_\infty\| \leq \delta_\emptyset$, and by taking $\zeta_M = 1$ for $M\in E_\emptyset({n_\emptyset}) = \Ga_\emptyset$. Assume that we have carried out the construction up to a certain point and the time has come to pick $\hat h_L$. Let $L_0$ be the immediate predecessor of $L$. We will assume $L = L_0^+$. Similar arguments work if $L = L_0^-$ or if $L = [0,1)$. Put $E_L = \Ga_{L_0}^+$ and pick $n_L\in \mathscr{U}$ so that
 \begin{equation}\label{condition on n_L} \|S^{E_L}_{n_L} - S^{E_L}_\infty\| \leq \de_L\text{ and  }\#E_L(n_L) \geq N(K,1/2,\de_L),\end{equation}
where $N(K,1/2,\de_L)$ is 
 given by Lemma \ref{concentration splitting vector valued}  to the compact set $K$, defined in beginning of this proof. We now apply that  Lemma to $G:E_L(n_L+1)\to K$ with $G(M) = S^{\{M\}}_\infty$. If we endow $G$ with the uniform probability measure, by \eqref{compact entries to near-tensor affine}, $\mathbb{E}(G) = S^{E_L(n_L+1)}_\infty$. Because $E_L(n_L+1)^* = E_L^*$, we may instead write $\mathbb{E}(G) = S^{E_L}_\infty$. We partition $E_L(n_L+1)$ into doubletons by writing $E_L(n_L+1)= \sqcup_{M\in E_L(n_L)}\{M^+,M^{-}\}$. For $M\in E_L(n_L)= \Ga_L$ define $M^1$ and $M^{-1}$ as follows.
\begin{equation*}
M^1 =
\left\{
	\begin{array}{ll}
		M^+  & \mbox{if } M\subset[\hat h_\emptyset = 1]\\
		M^{-} & \mbox{if } M\subset[\hat h_\emptyset = -1]
	\end{array}
\right.\text{ and }
M^{-1} =
\left\{
	\begin{array}{ll}
		M^-  & \mbox{if } M\subset[\hat h_\emptyset = 1]\\
		M^{+} & \mbox{if } M\subset[\hat h_\emptyset = -1]
	\end{array}
\right..
\end{equation*}
Take $\Phi:\{-1,1\}^{\Ga_L}\to\mathrm{conv}(K)$ given by
\begin{equation*}
\Phi(\zeta) = \frac{1}{\#\Ga_L}\sum_{M\in\Ga_L}G(M^{\zeta(M)}) = \frac{1}{\#\Ga_L}\sum_{M\in\Ga_L}S^{\{M^{\zeta(M)}\}}_\infty.
\end{equation*}
By the choice of $n_L$ so that $\#E_L(n_L) \geq N(K,1/2,\de_:)$, there exists a choice $\zeta\in\{-1,1\}^{\Ga_L}$ so that
\begin{equation*}
\|\Phi(\zeta) - \mathbb{E}(G)\| = \|\Phi(\zeta) - S^{E_L}_\infty\| \leq \de_L.
\end{equation*}
By \eqref{compact entries to near-tensor affine} and the definition of $\Phi$ we deduce that $(1/2)(\Phi(\zeta)+ \Phi(-\zeta)) = S^{E_L}_\infty$ and therefore we also have that
\begin{equation*}
\|G(\zeta) - S^{E_L}_\infty\| \leq \de_L.
\end{equation*}
To finish the proof, it remains to observe that if we take $\hat h_L = \sum_{M\in\Ga_L}\zeta(M)h_M$ we have that $S^{\Ga_L^+} = \Phi(\zeta)$ and $S^{\Ga_L^-} = \Phi(-\zeta)$. Indeed, taking a long and hard look at the definition of $M^1$ and $M^{-1}$ we eventually observe that for each $M\in\Ga_L$ we have $(h_\emptyset\zeta(M)h_M)|_{M^{\zeta(M)}} = 1$ and $(h_\emptyset\zeta(M)h_M)|_{M^{-\zeta(M)}} = -1$. This can be seen, e.g., by examining all four possible combinations of values of $h_\emptyset|_M$ and $\zeta(M)$. Therefore, it is now evident that
\begin{align*}
\Ga_L^+ &= \{M\in E_L(n_L+1): M\subset[h_\emptyset h_L = 1]\} = \cup\{M^{\zeta(M)}:M\in\Ga_L\}
\end{align*}
and therefore $S^{\Ga_L^+}_\infty = (\#\Ga_L)^{-1}\sum_{M\in\Ga_L}S_\infty^{\{M^{\zeta(M)}\}} = \Phi(\zeta)$. Finally, by using
\[\frac{1}{2}\Big(S^{\Ga_L^+}_\infty + S^{\Ga_L^-}_\infty\Big) = S^{\Ga_L}_\infty = S^{E_L}_\infty = \frac{1}{2}\Big(\Phi(\zeta) + \Phi(-\zeta)\Big)\]
we see that $S_\infty^{\Ga_L^-} = \Phi(-\zeta)$.

\end{proof}

Now that we are warmed up by the proof of Proposition \ref{compact entries to near-tensor} we are ready to proceed to the slightly more challenging proof of the following. We point out at this point that Theorem \ref{stable entries} is an immediate consequence of Proposition \ref{compact entries to near-tensor} and  the following
Proposition \ref{entries uniformly close to multipliers}

\begin{prop}
\label{entries uniformly close to multipliers}
Let $X$ be in $\cH^{**}$ and $T:L_1(X)\to L_1(X)$ be an $X$-diagonal operator and let $(\e_{(I,J)})_{(I,J)\in(\mathcal{D}^+)^2}$ be a collection of positive real numbers. Then, $T$ is a $1$-projectional factor of an $X$-diagonal operator $S$ with entries $(S_L)_{L\in\mathcal{D}^+}$ and the property that for every $L\in\mathcal{D}^+$ and $I\neq J\in\mathcal{D}^+$ we have
\begin{equation}
\label{ucmycse condition achieved}
\big|\big\langle h_I,S^L\big(|J|^{-1}h_J\big)\big\rangle\big|\leq\e_{(I,J)}.
\end{equation}
In particular, the entries of $S$ are uniformly eventually close to Haar multipliers.
\end{prop}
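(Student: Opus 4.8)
By Example \ref{standard projectioning}, any faithful Haar systems $(\tilde h_I)_{I\in\mathcal D^+}$ in $L_1$ and $(\hat h_L)_{L\in\mathcal D^+}$ in $X$ produce an operator $S$ of which $T$ is a $1$-projectional factor and which satisfies $\langle h_I\otimes\nu_L h_L, S(h_J\otimes\mu_M h_M)\rangle=\langle\tilde h_I\otimes\nu_L\hat h_L, T(\tilde h_J\otimes\mu_M\hat h_M)\rangle$. Since $T$ is $X$-diagonal, Remark \ref{sign independence} shows $S$ is $X$-diagonal as well, with entries $S^L=T^{\Gamma_L}=\sum_{M\in\Gamma_L}(|M|/|\Gamma_L^*|)T^M$ where $\hat h_L=h_{\Gamma_L}^{\zeta_L}$, and moreover $\langle h_I,S^L(|J|^{-1}h_J)\rangle=|J|^{-1}\langle\tilde h_I,T^{\Gamma_L}\tilde h_J\rangle$. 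Thus it suffices, for a preassigned summable family $(\delta_{(I,J,L)})$, to construct faithful Haar systems so that $|\langle\tilde h_I,T^{\Gamma_L}\tilde h_J\rangle|\le\delta_{(I,J,L)}$ for all $L$ and all $I\ne J$; then \eqref{ucmycse condition achieved} holds and the last assertion follows from Lemma \ref{uechm in practice}.

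\textbf{The recursion.} Following the pattern of Proposition \ref{compact entries to near-tensor}, I would first fix, via the WOT--sequential compactness of $\{T^M:M\in\mathcal D^+\}$ (Theorem \ref{relatively compact orbit}) and a Cantor diagonalization, an infinite set of levels $\mathscr U$ along which the averages $T^{E(n)}:=\frac1{|E|}\sum_{M\in\mathcal D_n,\,M\subset E}|M|T^M$ converge in WOT to a limit $T^E_\infty$ for every finite union $E$ of dyadic intervals; these limits satisfy the affine identity $T^{E}_\infty=(|E_1|/|E|)T^{E_1}_\infty+(|E_2|/|E|)T^{E_2}_\infty$ for $E=E_1\sqcup E_2$. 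The faithful Haar systems are then built recursively in the order of $\iota$, choosing $\tilde h_I$ before $\hat h_L$ at each step $k=\iota(I)=\iota(L)$. The conditions $|\langle\tilde h_I,T^{\Gamma_L}\tilde h_J\rangle|\le\delta_{(I,J,L)}$ in which $\hat h_L$ (hence $T^{\Gamma_L}$) is \emph{already fixed} and the later of $\tilde h_I,\tilde h_J$ is still to be chosen are ``easy'': choosing that later function among the terms of a Rademacher sequence deep in the faithful Haar system on its prescribed branch set makes it weakly null in $L_1$ and $w^*$-null in $L_\infty$, exactly as in Lemma \ref{far enough acts small}, and only finitely many such conditions are active at a given step.

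\textbf{The crux.} The remaining type of condition -- which I expect to be the main obstacle -- occurs when $\hat h_L$ is chosen: its support set $E_L$ is prescribed (a fixed finite union of dyadic intervals coming from $\hat h_{L_0}$), and we must choose the dyadic partition $\Gamma_L$ of $E_L$ and the signs $\zeta_L$ so that $|\langle\tilde h_I,T^{\Gamma_L}\tilde h_J\rangle|\le\delta_{(I,J,L)}$ for the finitely many already-fixed pairs $(\tilde h_I,\tilde h_J)$ with $\iota(I),\iota(J)\le k$, $I\ne J$. Taking $\Gamma_L$ to be a level-$n_L$ partition with $n_L\in\mathscr U$ large, the quantity $\langle\tilde h_I,T^{\Gamma_L}\tilde h_J\rangle$ is within $\delta$ of $\langle\tilde h_I,T^{E_L}_\infty\tilde h_J\rangle$, so it is enough to control the latter along the tree of branch sets. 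Here the concentration inequalities enter: when a branch set $E$ is split into equal-measure halves $E=E^1\sqcup E^2$ -- which is precisely what choosing the parent's signs $\zeta_{L_0}$ amounts to -- one applies Lemma \ref{concentration splitting} (in its vector-valued form, Lemma \ref{concentration splitting vector valued}, relative to the relevant finite set of coordinate functionals $u\mapsto\langle\tilde h_I,u\tilde h_J\rangle$, or to the compact operator families furnished by the diagonalization) to the map $M\mapsto T^{\{M\}}_\infty$ on a doubleton partition of a deep level inside $E$; this yields a splitting for which $T^{E^1}_\infty$ and $T^{E^2}_\infty$ are both close to $T^{E}_\infty$ on the relevant vectors, so that iterating down the tree keeps $\langle\tilde h_I,T^{E_L}_\infty\tilde h_J\rangle$ within a summable error of its value at the root. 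Finally, the value at the root is forced to be arbitrarily small by a Rosenthal-Lemma extraction in the spirit of Theorem \ref{relatively compact orbit} and the Fundamental Lemma \ref{fundamental lemma}: writing $\langle\tilde h_I,T^{\Gamma}\tilde h_J\rangle=|E|^{-1}\langle\tilde h_I\otimes h_\Gamma,T(\tilde h_J\otimes h_\Gamma)\rangle$, if for fixed $\tilde h_I,\tilde h_J$ with $I\ne J$ these averages did not tend to $0$ as the partition $\Gamma$ of $E$ refines, then -- using that the normalized Rademachers $h_\Gamma$ on $E$ are weakly null in $X$ and $w^*$-null in $X^*$ (Lemma \ref{far enough acts small}), that the associated orbits are uniformly integrable (Theorem \ref{relatively compact orbit}), and Rosenthal's Lemma \ref{rosenthal lemma} -- one would manufacture a sequence in $X$ equivalent to the $\ell_1$-basis which, by the distributional invariance of $\|\cdot\|_X$, is $\ell_1$-equivalent to a subsequence of the normalized Haar system, contradicting $X\in\cH^{**}$.

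\textbf{Conclusion.} Once the systems are built, $S$ is the $1$-projectional factor of $T$ from Example \ref{standard projectioning}, it is $X$-diagonal by Remark \ref{sign independence}, its entries satisfy \eqref{ucmycse condition achieved} by construction, and hence, by Lemma \ref{uechm in practice}, are uniformly eventually close to Haar multipliers. The delicate point throughout is the last paragraph: extracting, from the bare WOT-compactness of $\{T^M\}$ and the absence of $\ell_1$-subsequences in the normalized Haar system, enough rigidity to drive the averaged off-diagonal entries to zero -- this is where the concentration inequalities, uniform integrability, and Rosenthal's Lemma combine.
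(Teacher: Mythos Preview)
Your overall architecture is correct and matches the paper: pass to WOT-limits $T^E_\infty$ along a fixed set of levels $\mathscr U$, build two faithful Haar systems so that $S^L=T^{\Gamma_L}$, use Lemma~\ref{far enough acts small} for the ``easy'' constraints, and use the concentration inequality of Lemma~\ref{concentration splitting vector valued} to propagate the values $\langle\tilde h_I,T^{E_L}_\infty\tilde h_J\rangle$ along the tree of branch sets. However, the last part of your crux paragraph contains a genuine error.

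You claim that for \emph{fixed} $\tilde h_I,\tilde h_J$ with $I\neq J$ the averages $\langle\tilde h_I,T^{\Gamma}\tilde h_J\rangle$ must tend to~$0$ as $\Gamma$ refines, and that otherwise a Rosenthal-type argument would produce an $\ell_1$-sequence in $X$. This is false: take $T=S\otimes I$ for any bounded $S:L_1\to L_1$. Then $T^L=S$ for every $L$, hence $T^\Gamma=S$ for every $\Gamma$, and $\langle\tilde h_I,T^\Gamma\tilde h_J\rangle=\langle\tilde h_I,S\tilde h_J\rangle$ is a nonzero constant whenever $S$ is not a Haar multiplier. No contradiction with $X\in\mathcal H^{**}$ arises. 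Relatedly, you cannot propagate all the way to the root: the splitting at step $\iota(L_0)$ is chosen so that $T^{E_{L_0^\pm}}_\infty$ is close to $T^{E_{L_0}}_\infty$ only on the coordinate functionals $u\mapsto\langle\tilde h_I,u\tilde h_J\rangle$ for the $\tilde h_I,\tilde h_J$ \emph{already built at that moment}, i.e.\ with $\iota(I),\iota(J)\le\iota(L_0)$. For a pair $(I,J)$ with $\max(\iota(I),\iota(J))=k$ the propagation is useless once you reach an ancestor with $\iota<k$.

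The paper's fix is exactly the missing ingredient. One does \emph{not} drive the root value to zero; instead, when choosing $\tilde h_I$ at step $k$ one feeds into Lemma~\ref{far enough acts small} not only the already-built operators $T^{\Gamma_M}$ but also the finitely many WOT-limits $T^{E_L}_\infty$ for $L$ at the boundary level (in the paper, $L\in\mathcal D_{k-2}$). These are genuine bounded operators on $L_1$ by Theorem~\ref{relatively compact orbit}, and they are determined before $\tilde h_I$ is picked; so $\tilde h_I$ can be chosen deep enough that $|\langle\tilde h_I,T^{E_L}_\infty(|J|^{-1}\tilde h_J)\rangle|$ is already small. The concentration argument then only has to propagate from this boundary level downward, and at each such step the relevant $\tilde h_I,\tilde h_J$ are already in place, so the finite-set concentration applies. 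This is also why the paper uses an asymmetric recursion (one $\tilde h_I$ but an entire level $\mathcal D_{k-2}$ of $\hat h_L$'s per step, with concentration on $P_{\alpha_k}$): it synchronizes the boundary level with the step at which $\tilde h_I$ is chosen. The paper explicitly flags that the existence of $T^{E_L}_\infty$ as a bounded operator on all of $L_1$ (not just on finite-dimensional pieces) is essential precisely for this purpose.
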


\begin{proof}
The ``in particular'' part follows from Lemma \ref{uechm in practice} we therefore focus on achieving \eqref{ucmycse condition achieved}.

For each finite disjoint collection $\Gamma$ of $\De^+$ we define $\Ga(n)$, $T^\Ga$, and $T^\Ga_n$ as in the proof of Proposition \ref{compact entries to near-tensor}. Because $X\in\mathcal{H}^{**}$, by Theorem \ref{relatively compact orbit}, applied to the set $B = \{\mu_L h_L:L\in\mathcal{D}^+\}$, for every $f\in L_1$ the set $\{T^Lf:L\in\mathcal{D}^+\}$ is relatively compact and thus so is its convex hull. In particular, for every finite disjoint collection $\Ga$, $\{T^\Ga_nf:n\in\N\}$ is relatively compact 
by \eqref{average}.  By a Cantor diagonalization, we may find  $\mathscr{U}\in[\N]^\infty$ so that $\mathrm{WOT}$-$\lim_{n\in\mathscr{U}}T^\Ga_n = T^\Ga_\infty$ exists for every finite disjoint collection $\Ga$.

We will define inductively two  faithful Haar systems $(\widetilde h_I)_{I\in\mathcal{D}^+}$, $(\widehat h_L)_{L\in\mathcal{D}^+}$. In each step of the induction we will build a single vector $\widetilde h_I$ but we will build an entire level of vectors $\widehat h_L$. For example, in each of the first four steps of the inductive process we will define respectively the collections of vectors
\begin{gather*}
\{\widetilde h_\emptyset; \widehat h_\emptyset\},\; \{\widetilde h_{[0,1)}; \widehat h_{[0,1)}\},\; \{\widetilde h_{[0,1/2)}; \widehat h_{[0,1/2)}, \widehat h_{[1/2,1)}\},\text{ and}\\
\{\widetilde h_{[1/2,1)}; \widehat h_{[0,1/4)}, \widehat h_{[1/4,1/2)}, \widehat h_{[1/2,3/4)}, \widehat h_{[3/4,1)}\}.
\end{gather*}
This asymmetric choice is necessary because whenever we pick a new vector $\widetilde h_I$ we have to stabilize its interaction with all $\widehat h_L$ that will be defined in the future. For each $I,L\in\mathcal{D}^+$ we will have $\widetilde h_I = \sum_{J\in\De_I}h_J$ and $\widehat h_L = \sum_{M\in\Ga_L}\zeta_M h_M$, 
 for some family $(\zeta_M:M\in \Ga_L)\subset\{\pm1\}$.

Let us set up the stage that will allow us to state the somewhat lengthy inductive hypothesis. For each $I\in\mathcal{D}^+$ let
\[\e_I' = \min_{\big\{\substack{J,J'\in\mathcal{D}^+:\\\iota(J),\iota(J')\leq\iota(I)}\big\}}\e_{(J,J')}\]
and fix $(\de_L)_{L\in\mathcal{D}^+}$ so that for all $M\in\mathcal{D}^+$ we have
\[\sum_{L\subset M}\de_L \leq \e'_M/6.\]
Here, $\mathcal{D}_{-1} = \{\emptyset\}$ and $\mathcal{D}_0 = \{[0,1)\}$. For each $k\in\N$ and for every $L\in\mathcal{D}_{k-2}$ we have  for some $n_L\in \mathscr{U}$, $\Ga_L$ is a finite disjoint collection of $\mathcal{D}_{n_L}$ and $|\Ga_L^*| = |L|$. Additionally, if $\iota(I) = k$ the following hold.
\begin{enumerate}[leftmargin=19pt,label=(\alph*)]

\item\label{entries uniformly close to multipliers a} For some $\alpha_k\in\N$, $\De_I$ is a disjoint collection of $\mathcal{D}^{\alpha_k}\setminus\mathcal{D}^{\alpha_{k-1}}$ and $|\De_I| = |I|$. If $k>1$, then $\alpha_k>a_{k-1}$ and we put $\mathcal{D}^{\alpha_0} = \emptyset$.

\item\label{entries uniformly close to multipliers c} For every $J\in\mathcal{D}^+$ with $\iota(J)<k$ and every $M\in\mathcal{D}^{k-2}$ we have
\[\big|\big\langle \widetilde h_I, T^{\Ga_M}\big(|J|^{-1}\widetilde h_J\big)\big\rangle\big| \leq \e_I'/2\text{  and }\big|\big\langle \widetilde h_J, T^{\Ga_M}\big(|I|^{-1}\widetilde h_I\big)\big\rangle\big| \leq \e_I'/2.\]

\end{enumerate}

We will impose additional conditions. As in the proof of Proposition~\ref{compact entries to near-tensor} we put $\Ga_L^+ = \{M\in\mathcal{D}_{n_L+1}:M\subset[\tilde h_\emptyset\tilde h_L = 1]\}$ and $\Ga_L^- = \{M\in\mathcal{D}_{n_L+1}:M\subset[\tilde h_\emptyset\tilde h_L = -1]\}$, for each $L$. If $L=\emptyset$ put $E_L = \{[0,1)\}$, if $L = [0,1)$ put $E_L = \Ga_\emptyset$, if $L = L_0^+$ put $E_L = \Ga_{L_0}^+$, and if $L = L_0^-$ put $E_L = \Ga_{L_0}^-$. Furthermore, for each $\alpha\in\N$ let $P_\alpha:L_1\to L_1$ denote the canonical projection onto $\langle\{h_I:I\in\mathcal{D}^\alpha\}\rangle$. We require the following for each  $L\in\mathcal{D}_{k-2}$.
\begin{enumerate}[leftmargin=23pt,label=(\roman*)]

\item\label{entries uniformly close to multipliers 4}  The set $\Ga_L$ is of the form $E_L(n_L)$.

\item\label{entries uniformly close to multipliers 5} $\|P_{\alpha_k}(T^{E_L}_{n_L} - T_\infty^{E_L})P_{\alpha_k}\| \leq \de_{L}$.

\item\label{entries uniformly close to multipliers 6}  $\|P_{\alpha_k}(T_\infty^{E_L} - T_\infty^{\Ga_L^+})P_{\alpha_k}\| \leq \de_L$ and $\|P_{\alpha_k}(T_\infty^{E_L} - T_\infty^{\Gamma_L^-})P_{\alpha_k}\| \leq \de_L$.

\end{enumerate}

One might jump to the conclusion that the weaker property that, for each $k\in\N$,  $\lim_{n\in\mathscr{U}} P_kT_n^\Ga P_k$ exists is sufficient to yield the same result. This is in fact false. We would not know that $T^\Ga_\infty:L_1\to L_1$ is well defined as the Haar system is not boundedly complete. In the inductive step, the operators $T^{E_L}_\infty$, $L\in\mathcal{D}_{k-2}$ are used in the choice of $\widetilde h_I$, $\iota(I) = k$. Therefore the fact that for each $\Ga$, $\mathrm{WOT}$-$\lim_{n\in\mathscr{U}}T^{\Ga}_n = T^\Ga_\infty$ is necessary. 

We assume that we have completed the construction to finish the proof. Take the isometry $A$ given by $A(h_I\otimes h_L) = \widetilde h_I\otimes\widehat h_L$ and the norm-one projection $P$ onto the image of $A$ given by
\[P(u)= \sum_{I,L\in\mathcal{D}^+}\big\langle\widetilde h_I\otimes |L|^{-1/q}\widehat h_L, u \big\rangle  |I|^{-1}\widetilde h_I\otimes|L|^{-1/p}\widehat h_L.\]
The operator $T$ is a 1-projectional factor of $S = A^{-1}PTA$ and $S$ is $X$-diagonal with entries $(S^L)_{L\in\mathcal{D}^+}$ so that for each $L,I,J\in\mathcal{D}^+$ we have
\[\langle h_I,S^L(h_J)\rangle = \langle \widetilde h_I,T^{\Ga_L}\widetilde h_J\rangle.\]
We fix $I\neq J\in\mathcal{D}^+$ with $\iota(J)<\iota(I) = k$ and $L\in\mathcal{D}^+$. If $L\in\mathcal{D}^{k-2}$ then by \ref{entries uniformly close to multipliers c} we have
\begin{align}
\label{entries uniformly close to multipliers controlled height}
\begin{split}
\big|\big\langle  h_I, S^L\big(|J|^{-1} h_J\big)\big\rangle\big| &\leq \e_I'/2\leq \e_{(I,J)}\text{  and }\\
\big|\big\langle  h_J, S^L\big(|I|^{-1} h_I\big)\big\rangle\big| &\leq \e_I'/2\leq \e_{(J,I)}.
\end{split}
\end{align}
Assume then that $L\in\mathcal{D}_{k'-2}$ with $k' > k$. Let $L_k,\ldots,L_{k'-1},L_{k'} = L$ be a sequence with $L_j\in\mathcal{D}^+_{j-2}$ and each term is a direct successor of the one before it. Repeat the argument from \eqref{compact entries to near-tensor telescopic} to deduce that for $k\leq j<k'$
\begin{gather*}
\big\|P_{\al_k}(T^{\Ga_{L_j}} - T^{\Ga_{L_{j+1}}})P_{\al_k}\big\| \leq \big\|P_{\al_j}(T^{\Ga_{L_j}} - T^{\Ga_{L_{j+1}}})P_{\al_j}\big\| \leq 2\de_{L_j} + \de_{L_{j+1}}\text{, i.e.,}\\
\big\|P_{\al_k}(T^{\Ga_L} - T^{\Ga_{L_{k}}})P_{\al_k}\big\| \leq 3\sum_{M\subset L_k}\de_M \leq \e_{L_k}'/2\leq \e_{I}'/2 \text{ (because }\iota(I)\leq\iota(L_k)\text{).}
\end{gather*}
Therefore,
\begin{align*}
\big|\big\langle  h_I, S^L\big(|J|^{-1} h_J\big)\big\rangle\big| &= \big|\big\langle  \widetilde h_I, T^{\Ga_L}\big(|J|^{-1}\widetilde h_J\big)\big\rangle\big| = \big|\big\langle  \widetilde h_I, P_{\al_k}T^{\Ga_L}P_{\al_k}\big(|J|^{-1}\widetilde h_J\big)\big\rangle\big|\\
 & \leq \big|\big\langle \widetilde h_I, P_{\al_k}T^{\Ga_{L_k}}P_{\al_k}\big(|J|^{-1}\widetilde h_J\big)\big\rangle\big| + \e'_I/2\\
 & = \big|\big\langle \widetilde h_I, T^{\Ga_{L_k}}\big(|J|^{-1}\widetilde h_J\big)\big\rangle\big| + \e'_I/2\\
 & = \big|\big\langle h_I, S^{L_k}\big(|J|^{-1} h_J\big)\big\rangle\big| + \e'_I/2 \stackrel{\eqref{entries uniformly close to multipliers controlled height}}{\leq} \e'_I/2+\e'_I/2 \leq \e_{(I,J)}.
\end{align*}
Repeating the argument yields $|\langle  h_J, S^L(|I|^{-1} h_I)\rangle| \leq \e_{(J,I)}$.
To complete the proof we still need to cary out the inductive construction. In the first step we may take $\widetilde h_\emptyset = h_\emptyset$ (i.e., $\De_\emptyset = \{\emptyset\}$) and thus we may take, e.g., $\al_1 = 1$. Next, we pick $n_\emptyset\in\mathscr{U}$ sufficiently large so that we have $\|P_1(T^{\{[0,1)\}}_{n_\emptyset} - T^{\{[0,1)\}}_\infty)P_1\|\ \leq \de_\emptyset$. We put $\widehat h_\emptyset = \sum_{M\in E_\emptyset(n_\emptyset)} h_L$ (i.e., $\Ga_\emptyset = E_\emptyset(n_\emptyset)$ with $E_\emptyset = \{[0,1)\}$ and $\zeta_M = 1$ for $M\in\Ga_\emptyset$). The only non-trivial condition to check is \ref{compact entries to near-tensor 6}, which follows from the fact that $E_\emptyset^* = (\Ga_\emptyset^+)^*$ and thus  $T^{E_\emptyset}_\infty = T^{\Ga^+_\emptyset}_\infty$. We do not consider the set $\Ga_\emptyset^-$.

We now present the $k$'th step for $k\geq 2$. Let $I\in\mathcal{D}^+$ with $\iota(I) = k$ and denote by $I_0$ its immediate predecessor. We will assume that $I = I_0^+$. For each $L\in\mathcal{D}_{k-2}$ we denote its immediate predecessor by $L_0$. Recall that for each such $L$ the set $E_L$ has been defined based on whether $L = L_0^+$ or $L = L_0^-$. Consider the following finite sets.
\begin{align*}
\mathscr{T} &= \big\{T^{\Ga_L}:L\in\mathcal{D}^{k-3}\big\}\cup\big\{T^{E_L}_\infty:L\in\mathcal{D}_{k-2}\big\}\subset\mathcal{L}(L_1),\\
G&=\big\{\widetilde h_J:\iota(K)<k\big\}\subset L_\infty\text{ and }F = \big\{|J|^{-1}\widetilde h_J:\iota(J)<k\big\}.
\end{align*}
By Lemma \ref{far enough acts small}, there exists $i_0\in\N$ so that for any finite disjoint collection $\De\subset\mathcal{D}^+$ with $\min\iota(\De) \geq i_0$ and any $\theta\in\{-1,1\}^\De$ we have that for all $T\in\mathscr{T}$, $g\in G$, and $f\in F$
\begin{equation}
\label{entries uniformly close to multipliers small on finite}
\big|\big\langle g, T(h^\theta_\De)\big\rangle\big| \leq |I|\e_I'/3\text{ and }\big|\big\langle h^\theta_\De, T(f)\big\rangle\big| \leq \e_I'/3.
\end{equation}
We pick $\De_I$ with $\min\iota(\De_I)\geq i_0$ and so that \ref{entries uniformly close to multipliers a}
is satisfied. The integer $\alpha_k$ is simply chosen so that $P_{\al_k}\widetilde h_I = \widetilde h_I$. It is immediate that condition \ref{entries uniformly close to multipliers c} is satisfied for all $M\in\mathcal{D}^{k-3}$. Later we will show that (b) also holds for $M\in\mathcal{D}^{k-2}$. 

In the next step, for each $L\in\mathcal{D}_{k-2}$ we need to pick $n_L$ that defined $\Gamma_L$ and $\zeta_L\in\{-1,1\}^{\Ga_L}$. The choice of $n_L$ so that \ref{entries uniformly close to multipliers 4} and \ref{entries uniformly close to multipliers 5} are satisfied is easy. However, we wish to ensure that we can additionally achieve condition \ref{entries uniformly close to multipliers 6} and for this we need Lemma \ref{concentration splitting vector valued}.  Consider the relatively compact set $K = \{P_{\al_k}T_\infty^{\Ga}P_{\al_k}: \Ga$ is a finite disjoint collection of $\mathcal{D}^+\}\subset\mathcal{L}(L_1)$ and take $N(K,2^{-k},\e_I'/6)$ given by Lemma \ref{concentration splitting vector valued}. For each $L\in\mathcal{D}^{k-2}$ pick $n_L\in\mathscr{U}$ so that \ref{entries uniformly close to multipliers 5} is satisfied as well as $\#E_L(n_L) \geq N(K,2^{-k},\e_I'/6)$. The objective is to pick, for each $L\in\mathcal{D}_{k-2}$, signs $\zeta_L\in\{-1,1\}^{\Ga_L}$ so that \ref{entries uniformly close to multipliers 6} is satisfied. Repeating, word for word, the argument from the last few paragraphs of the proof of Proposition \ref{compact entries to near-tensor} we can do exactly that.

The final touch that is required to complete the proof is to observe that \ref{entries uniformly close to multipliers c} is now also satisfied for all $L\in\mathcal{D}_{k-2}$. Indeed, for $J\in\mathcal{D}^+$ with $\iota(J) < k$ we have
\begin{align*}
\big|\big\langle  \widetilde h_I, T^{\Ga_L}\big(|J|^{-1}\widetilde h_J\big)\big\rangle\big| &= \big|\big\langle  P_{\al_k}^*\widetilde h_I, T^{E_L}_{n_L}P_{\al_k}\big(|J|^{-1}\widetilde h_J\big)\big\rangle\big|\\
 &= \big|\big\langle\widetilde  h_I, P_{\al_k}T^{E_L}_{n_L} P_{\al_k}\big(|J|^{-1}\widetilde h_J\big)\big\rangle\big|\\
 & \stackrel{\text{\ref{entries uniformly close to multipliers 5}}}{\leq} \big|\big\langle\widetilde h_I, P_{\al_k}T^{E_L}_\infty P_{\al_k}\big(|J|^{-1}\widetilde h_J\big)\big\rangle\big| + \de_L\\
 & \stackrel{\text{\eqref{entries uniformly close to multipliers small on finite}}}{\leq} \e_I'/3 + \de_L \leq \e_I'/3 + \e_I'/6 = \e_I'/2.
\end{align*}
The same argument yields $|\langle \widetilde h_J,T^{\Ga_L}(|I|^{-1}\widetilde h_I)\rangle| \leq \e_I'/2$.
\end{proof}

\section{Projectional factors of scalar operators}
In this section we put the finishing touches to prove our main result.
\begin{thm}
\label{main theorem}
Let $X$ be in $\cH^*$ and $\cH^{**}$ and let $T:L_1(X)\to L_1(X)$ be a bounded linear operator. Then, for every $\e>0$, $T$ is a 1-projectional factor with error $\e$ of a scalar operator. In particular, $L_1(X)$ is primary.
\end{thm}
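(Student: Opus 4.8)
The plan is to run the five-step reduction outlined in the introduction, tracking all error terms so that they accumulate to less than the prescribed $\e$, and then to conclude via Proposition~\ref{primarity}. Fix a bounded operator $T:L_1(X)\to L_1(X)$ and $\e>0$. Since $X\in\cH^*$, Theorem~\ref{arbitrary to X-diagonal} (applied with tolerance $\e/8$) yields an $X$-diagonal operator $S_1$ of which $T$ is a $1$-projectional factor with error $\e/8$. Since $X\in\cH^{**}$, Theorem~\ref{stable entries}, applied with a summable family $(\e_M)_{M\in\mathcal{D}^+}$ of positive reals to be fixed below, yields an $X$-diagonal operator $S_2$ with entries $(S_2^L)_{L\in\mathcal{D}^+}$ satisfying $\|S_2^L-S_2^M\|\le\e_M$ whenever $L\subseteq M$, of which $S_1$ is a $1$-projectional factor (with error $0$).

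The next step (Step~4 of the outline, Proposition~\ref{T tensor identity}) converts $S_2$ into a tensor operator: put $T^0:=S_2^{[0,1)}:L_1\to L_1$. One constructs recursively a faithful Haar system $(\tilde h_I)_{I\in\mathcal{D}^+}$ in $L_1$, with associated norm-one projection $P$ and onto isometry $A:L_1\to Z$ where $Z$ is an isometric copy of $L_1$; compressing $S_2$ by $A\otimes Id_X$ and $P\otimes Id_X$, which is legitimate by Proposition~\ref{L_1(X) complemented in L_1(X)} applied with $W=X$ and $Q=B=Id_X$, produces an $X$-diagonal operator whose $L$-th entry is, by Remark~\ref{sign independence}, an average of the $S_2^K$ over dyadic intervals $K$ at a fixed deep level lying inside a nested ancestor of the support of $\tilde h_L$. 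Telescoping along the chain $L\subseteq I\subseteq[0,1)$ and using the stability estimates $\|S_2^I-S_2^{I'}\|\le\e_{I'}$ shows that each such average lies within $\sum_{I\supseteq L}\e_I\le\sum_M\e_M$ of $T^0$ in operator norm. Hence $S_2$ is a $1$-projectional factor with error at most $\sum_M\e_M$ of $T^0\otimes Id_X$. We now fix $(\e_M)$ with $\sum_M\e_M<\e/8$; composing the three steps so far through Proposition~\ref{approximate factor transitive}(b) (all constants equal $1$, so the errors simply add), $T$ is a $1$-projectional factor with error $<\e/4$ of $T^0\otimes Id_X$.

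It remains to factor a scalar operator through $T^0\otimes Id_X$. By Theorem~\ref{icebreaker}, $T^0$ is, for some scalar $\la$, a $1$-projectional factor with error $\e/4$ of $\la I_{L_1}$; let $P,A:L_1\to L_1$ witness this, so $\|A^{-1}PT^0A-\la I_{L_1}\|\le\e/4$ and $\|A\|\,\|A^{-1}P\|\le1$. Tensoring with $Id_X$, using that the projective tensor product acts multiplicatively on operators with $\|S\otimes Id_X\|=\|S\|$, and using Proposition~\ref{L_1(X) complemented in L_1(X)} ($W=X$, $Q=B=Id_X$), the maps $P\otimes Id_X$ and $A\otimes Id_X$ witness that $T^0\otimes Id_X$ is a $1$-projectional factor with error $\e/4$ of $\la I_{L_1(X)}$, since $(A^{-1}P\otimes Id_X)(T^0\otimes Id_X)(A\otimes Id_X)=(A^{-1}PT^0A)\otimes Id_X$ and $\|(A^{-1}PT^0A-\la I_{L_1})\otimes Id_X\|=\|A^{-1}PT^0A-\la I_{L_1}\|\le\e/4$. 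A final application of Proposition~\ref{approximate factor transitive}(b) shows that $T$ is a $1$-projectional factor with error $<\e/4+\e/4<\e$ of the scalar operator $\la I_{L_1(X)}$. Since $\e>0$ was arbitrary, every bounded operator on $L_1(X)$ is, for every $\e>0$, a $1$-projectional factor with error $\e$ of a scalar operator; moreover $L_1(X)\simeq\ell_1(L_1(X))$, as one sees by splitting $[0,1)$ into countably many subintervals of positive length. Proposition~\ref{primarity}, applied with any fixed $\e\in(0,1/2)$, therefore gives that $L_1(X)$ is primary (and in fact that the identity factors through $T$ or $I-T$ with norm control $2/(1-2\e)$).

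The conceptual weight of the argument lies entirely in the preparatory results: the reduction to $X$-diagonal operators (Theorem~\ref{arbitrary to X-diagonal}, resting on the Fundamental Lemma, Theorem~\ref{fundamental lemma}, and the weak-nullity of Rademacher-type sequences) and the strong stabilization of the entries (Theorem~\ref{stable entries}, resting on the concentration estimates of Lemmas~\ref{concentration splitting} and~\ref{concentration splitting vector valued}). Granting those, the proof of the theorem is essentially error bookkeeping plus two routine tensor liftings; the one place that still calls for genuine care is Step~4, where the telescoping faithful Haar system must be threaded so that, after compression, the nested increments of $(S_2^L)$ cancel and only $T^0\otimes Id_X$ survives, up to the controlled error.
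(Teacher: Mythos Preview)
Your overall architecture matches the paper's proof: chain Theorem~\ref{arbitrary to X-diagonal}, Theorem~\ref{stable entries}, Proposition~\ref{T tensor identity}, and Theorem~\ref{icebreaker} via Proposition~\ref{approximate factor transitive}, then invoke Proposition~\ref{primarity}. The tensor lifting in your last step and the accordion verification $L_1(X)\simeq\ell_1(L_1(X))$ are fine.

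The gap is your Step~4. You cite Proposition~\ref{T tensor identity} but then describe a construction that is both unnecessary and incorrect as written. Proposition~\ref{T tensor identity} is a pure perturbation estimate: if an $X$-diagonal operator $S_2$ satisfies $\|S_2^L-S_2^M\|\le\e'|M|^2$ whenever $L\subset M$, then $\|S_2-S_2^\emptyset\otimes I\|\le 7\e'$, full stop. So in Theorem~\ref{stable entries} one simply chooses $\e_M=\e'|M|^2$ (a summable family), and the stable $S_2$ is then automatically a $1$-projectional factor of $S_2^\emptyset\otimes I$ with error $7\e'$ via $A=P=I$. No faithful Haar system, no compression.

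Your proposed construction also has the tensor factors swapped. Compressing by $A\otimes Id_X$ with $A$ acting on the $L_1$ factor gives an operator whose $L$-th entry is $A^{-1}PS_2^LA$; each entry is conjugated by the \emph{same} maps, and no averaging over different $S_2^K$ occurs. Remark~\ref{sign independence} is about averaging when one passes to a faithful Haar system in the $X$ component (second factor), as in the proofs of Propositions~\ref{compact entries to near-tensor} and~\ref{entries uniformly close to multipliers}; it says nothing about compression in the $L_1$ direction. So the ``telescoping faithful Haar system'' you describe would not produce what you claim. (A minor point: the tensor operator in Proposition~\ref{T tensor identity} uses $S_2^\emptyset$, not $S_2^{[0,1)}$; the difference is bounded by $\e_\emptyset$, so this is harmless once the rest is fixed.)
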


We first need to prove a perturbation result that will allow us to pass from Theorem \ref{stable entries} to the conclusion.

\begin{prop}
\label{T tensor identity}
Let $X$ be  a Haar  system space and $T:L_1(X)\to L_1(X)$ be an $X$-diagonal operator with entries $(T^L)_{L\in\mathcal{D}^+}$ and let $\e>0$. Assume that for all $L,M\in\mathcal{D}^+$ with $L\subset M$ we have $\|T^L - T^M\| \leq \e|M|^2$. Then, $\|T - T^\emptyset\otimes I\| \leq 7\e$.
\end{prop}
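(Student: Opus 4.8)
The plan is to fix $u \in L_1(X)$ with $\|u\| \le 1$, write it in the canonical representation $u = \sum_{M\in\mathcal{D}^+} f_M \otimes (\mu_M h_M)$ afforded by the monotone Schauder decomposition $(Y^L)$, and estimate $\|(T - T^\emptyset\otimes I)u\| = \big\|\sum_M ((T^M - T^\emptyset)f_M)\otimes(\mu_M h_M)\big\|$. The key device is the telescoping chain mentioned in Step 4 of the introduction: for each $M$, choose the decreasing sequence $\emptyset = M_0 \subset M_1 = [0,1) \subset \cdots \subset M_{k_0} = M$ of all elements of $\mathcal{D}^+$ containing $M$ (as in \eqref{normalized intervals}), and write $T^M - T^\emptyset = \sum_{k=1}^{k_0}(T^{M_k} - T^{M_{k-1}})$. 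Since each consecutive pair satisfies $M_k \subset M_{k-1}$, the hypothesis gives $\|T^{M_k} - T^{M_{k-1}}\| \le \e|M_{k-1}|^2 = \e\, 4^{-(k-2)}$ (using $|M_{k-1}| = 2^{-(k-2)}$ for $k\ge 2$, and handling $k=1$ separately with $\|T^{[0,1)} - T^\emptyset\| \le \e$).

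The heart of the argument is to reorganize the double sum. Collecting, for each $N \in \mathcal{D}^+$, all $M$ with $M \subsetneq N$ (equivalently, $N$ appears as one of the $M_k$ with $k < k_0$ in the branch of $M$), we get
\[
(T - T^\emptyset\otimes I)u = \sum_{N\in\mathcal{D}^+} \Big((T^{N'} - T^{N})\Big)\Big(\sum_{M\subsetneq N,\ M\subset N'}f_M\Big)\otimes \cdots,
\]
but this needs care: the cleanest route is instead to fix the level and write, for each $N \in \mathcal{D}^+$ with $|N| = 2^{-j}$, $g_N := \sum_{\{M : N \subseteq M\}} f_M \otimes (\mu_M h_M)$ — wait, that is not quite a projection. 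Let me reorganize. I would instead group by the \emph{child} step: for $N \in \mathcal{D}^+$ let $\Delta_N$ be the operator $T^{N} - T^{N_-}$ where $N_-$ is the parent of $N$ in $\mathcal{D}^+$, and observe
\[
T^M - T^\emptyset = \sum_{\{N \in \mathcal{D}^+ :\ \emptyset \ne N \subseteq M\}} \Delta_N,
\]
so that
\[
(T - T^\emptyset\otimes I)u = \sum_{\emptyset \ne N \in \mathcal{D}^+} \Big(\Delta_N \Big(\textstyle\sum_{\{M :\ N \subseteq M\}} \langle \text{appropriate coordinate}\rangle\Big)\Big)\otimes(\cdots).
\]
The point is that for fixed $N$, the inner sum $\sum_{\{M : N\subseteq M\}} f_M \otimes (\mu_M h_M)$ is $P^{\ge N} u$ for the natural block of the decomposition $(Y^M)$, hence has norm $\le \|u\| \le 1$ if one orders carefully — but the $M$'s with $N \subseteq M$ do \emph{not} form an initial segment of the monotone order $\iota$, so I cannot directly bound this by $1$. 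The fix is the observation in Proposition \ref{RI properties}\ref{RI properties 4}/the remark on $P^L$: the relevant object is $\sum_{\{M : N \subseteq M\}} P^M u$, and $\{M : N\subseteq M\}$ is a \emph{chain} in $\mathcal{D}^+$, along which the partial-sum projections telescope; combined with monotonicity of the Haar basis of $X$ one shows $\|\sum_{\{M : N\subseteq M\}}P^M u\| \le 2$ (difference of two initial segments). Then
\[
\|(T - T^\emptyset\otimes I)u\| \le \sum_{\emptyset \ne N} \|\Delta_N\| \cdot 2 \cdot \Big(\text{something summable in }N\Big),
\]
and here is where the $|M|^2$ in the hypothesis is spent: at level $j$ there are $2^j$ intervals $N$ with $|N_-| = 2^{-(j-1)}$, each contributing $\|\Delta_N\| \le \e\, 4^{-(j-1)}$, so $\sum_{|N|=2^{-j}} \|\Delta_N\| \le 2^j \cdot \e\, 4^{-(j-1)} = 4\e\, 2^{-j}$, which sums over $j\ge 1$ to $4\e$; together with the $j=0\to1$ term ($\le \e$) and the factor $2$ and a little slack one lands at $7\e$.

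The main obstacle I anticipate is precisely the bookkeeping in the middle paragraph: one must (i) correctly identify, after swapping the order of summation, which "mass" of $u$ each difference operator $\Delta_N$ (or $T^M - T^N$) acts on, and (ii) bound the norm of that mass in $L_1(X)$ — this requires using that the chain $\{M \in \mathcal{D}^+ : N \subseteq M\}$ indexes a nested family and that the associated sum of the projections $P^M$ is the difference of two partial-sum projections of the monotone Schauder decomposition $(Y^M)_{M\in\mathcal{D}^+}$ of $L_1(X)$, hence of norm at most $2$. Once the combinatorial identity $T^M - T^\emptyset = \sum_{\emptyset\ne N\subseteq M}\Delta_N$ is used to Fubini the double sum, and the geometric bound $\sum_{|N| = 2^{-j}}\|\Delta_N\| = O(\e 2^{-j})$ is in hand, the remaining estimate is a routine geometric series. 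I would also double-check the edge cases $N = [0,1)$ and the initial function $h_\emptyset$, since $|\emptyset|$ is not a dyadic length and the hypothesis $\|T^L - T^M\| \le \e|M|^2$ for $L \subset M$ should be read with $M = \emptyset$ giving $|M| = 1$, consistent with $h_\emptyset$'s normalization.
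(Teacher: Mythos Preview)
Your telescoping-and-Fubini idea is a natural one, but there is a real gap at the step you yourself flag as ``the main obstacle.'' After the swap, for each $N\in\mathcal{D}$ you have $(\Delta_N\otimes I)$ acting on
\[
v_N\;=\;\sum_{\{M\in\mathcal{D}:\,M\subseteq N\}} f_M\otimes(\mu_M h_M),
\]
where $M\subseteq N$ means interval containment, i.e.\ $M$ is a \emph{descendant} of $N$ in the dyadic tree. (Your write-up has the direction of $\subseteq$ tangled in several places, which is what obscures this.) You then assert that this index set is a chain and hence that $v_N$ is a difference of two basis projections of $u$. That is false: the set of descendants of $N$ is the full subtree below $N$, not a chain, and it is nowhere close to an interval in the order $\iota$. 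So the ``difference of two initial segments, hence norm $\le 2$'' justification does not apply.

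The route can be repaired, but by a different mechanism: for $N\in\mathcal{D}$ the subtree projection $Q_N$ on $X$ is given by $Q_N f = \chi_N f - (|N|^{-1}\!\int_N f)\,\chi_N$, and using $\|R_N\|\le 1$ together with $\mu_N\nu_N=|N|^{-1}$ from Proposition~\ref{RI properties}\ref{RI properties 4} one checks $\|Q_N\|_{X\to X}\le 2$. With that in hand your estimate goes through and gives $\|T-T^\emptyset\otimes I\|\le 2\sum_{N\in\mathcal{D}}\|\Delta_N\|\le 10\e$ (the arithmetic leading to your $7\e$ is off, but the constant is immaterial). The paper takes a different route altogether: it introduces the auxiliary operators $S_n=\sum_{L\in\mathcal{D}_n}T^L\otimes R^L$ built from the restriction maps $R^L:X\to X$, shows $\|S_n-S_{n+1}\|\le \e\,2^{-n+1}$ directly, and then compares $T$ with $S_{n_0}$ on elementary tensors $f\otimes g$ via the expansion~\eqref{expand ellpeatwototheen} and the H\"older-type bound~\eqref{like Holder}; this sidesteps the subtree-projection issue entirely.
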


\begin{rem}
Let $n_0\in\N$ and for each $L\in\mathcal{D}_{n_0}$ let $(\theta_k^{L})_{k=0}^{n_0}$ be the signs given by \eqref{normalized intervals simple}. Then, for scalars $(a_L)_{L\in\mathcal{D}_{n_0}}$ we may write
\[\sum_{L\in\mathcal{D}_{n_0}}a_L|L|^{-1}\chi_L = \big(\!\!\!\sum_{L\in\mathcal{D}_{n_0}}\!\!\!a_L\big)h_\emptyset + \sum_{k=1}^{n_0}\sum_{M\in\mathcal{D}_{k-1}}\Big(\!\!\!\!\sum_{\big\{\substack{L\in\mathcal{D}_{n_0}:\\ L\subset M}\big\}}\!\!\!\!\theta_k^La_L\Big)|M|^{-1}h_M.\]

We now take an RI space $X$ and translate this into the $X$ setting. For $k=0,\ldots,n_0$ put $\mu_k = \mu_L$ and $\nu_k = \nu_L$ for $L\in\mathcal{D}_k$. Multiply both sides by $\nu_{n_0}^{-1}$ so that for $L\in\mathcal{D}_{n_0}$ we have $|L|^{-1}\nu_{n_0}^{-1} = \mu_L$.
\begin{gather}
\label{expand ellpeatwototheen}
\sum_{L\in\mathcal{D}_{n_0}}a_L\mu_L\chi_L =\\
\nu_{n_0}^{-1}\big(\!\!\!\sum_{L\in\mathcal{D}_{n_0}}\!\!\!a_L\big)h_\emptyset + \nu_{n_0}^{-1}\sum_{k=1}^{n_0}\nu_{k-1}\!\!\!\!\sum_{M\in\mathcal{D}_{k-1}}
\Big(\!\!\!\!\sum_{\big\{\substack{L\in\mathcal{D}_{n_0}:\\ L\subset M}\big\}}\!\!\!\!\theta_k^La_L\Big)\mu_Mh_M.\nonumber
\end{gather}
Scalar multiplication may be replaced with tensor multiplication to obtain the same formula (i.e., consider $a_L\otimes \chi_L$ where $a_L$ is, e.g., in $L_1$).

Let us additionally observe that for any $1\leq k\leq n_0$ and $M\in\mathcal{D}_{k-1}$ we have
\begin{align}\label{like Holder}
\sum_{\big\{\substack{L\in\mathcal{D}_{n_0}\\L\subset M}\big\}}|a_L|& = \big\langle\sum_{\big\{\substack{L\in\mathcal{D}_{n_0}\\L\subset M}\big\}}|a_L|\mu_{L}\chi_L,\sum_{\big\{\substack{L\in\mathcal{D}_{n_0}\\L\subset M}\big\}}\nu_{L}\chi_L\big\rangle\\
&\leq \big\|\sum_{\big\{\substack{L\in\mathcal{D}_{n_0}\\L\subset M}\big\}}|a_L|\mu_{L}\chi_L\big\|\big\|\sum_{\big\{\substack{L\in\mathcal{D}_{n_0}\\L\subset M}\big\}}\nu_{L}\chi_L\big\|_{X^*}\nonumber\\
& \leq \big\|\sum_{L\in\mathcal{D}_{n_0}}a_L\mu_{L}\chi_L\big\|\nu_{n_0}\big\|\chi_M\big\|_{X^*}\nonumber\\
& =\nu_{n_0} \nu_{k-1}^{-1}  \big\|\sum_{L\in\mathcal{D}_{n_0}}a_L\mu_{L}\chi_L\big\|.\nonumber
\end{align}
\end{rem}

\begin{proof}[Proof of Proposition \ref{T tensor identity}]
For $n=0,1,\ldots$ consider the auxiliary operator $S_n = \sum_{L\in\mathcal{D}_n}T^L\otimes R^L$, where $R^L:X\to X$ denotes the restriction onto $L$, i.e., $R^Lf = \chi_Lf$. We observe that
\begin{align*}
\|S_n - S_{n+1}\| &=\big\|\sum_{L\in\mathcal{D}_n}T^L\otimes\big(R^{L^+} + R^{L^-}\big) - \sum_{L\in\mathcal{D}_n}\big(T^{L^+}\otimes R^{L^+} + T^{L^-}\otimes R^{L^-}\big)\big\|\\
&\leq \sum_{L\in\mathcal{D}_n}\big(\|T^L - T^{L^+}\|\|R_{L^+}\| + \|T^L - T^{L^-}\|\|R^{L^-}\|\big)\\
&\leq 2\e\sum_{L\in\mathcal{D}_n}|L|^2 = \e 2^{-n+1}.
\end{align*}
In particular, for all $n\in\N$ we have 
\begin{equation}\label{estimate}\|T^\emptyset\otimes I - S_n\| = \|S_0 - S_n\| \leq 4\e.\end{equation}
 By \eqref{projective tensor norm}, to estimate $\|T - T^\emptyset\otimes I\|$ it is sufficient to consider vectors of the form $f\otimes g$, with $f\in B_{L_1}$, $g=\sum_{L\in\mathcal{D}_{n_0}}a_L\mu_L\chi_L$, and $\|\sum_{L\in\mathcal{D}_{n_0}}a_L\mu_L\chi_L\| = 1$. By \eqref{expand ellpeatwototheen}
\begin{align}
\label{T tensor identity eq1}
f\otimes g &= \nu_{n_0}^{-1}\big(\!\!\!\sum_{L\in\mathcal{D}_{n_0}}\!\!\!a_L\big)f\otimes h_\emptyset\\
 &\qquad+ \nu_{n_0}^{-1}\sum_{k=1}^{n_0}\nu_{k-1}\!\!\!\!\sum_{M\in\mathcal{D}_{k-1}}\Big(\!\!\!\!\sum_{\big\{\substack{L\in\mathcal{D}_{n_0}:\\ L\subset M}\big\}}\!\!\!\!\theta_k^La_L\Big)f\otimes\mu_Mh_M.\nonumber
\end{align}
 From \eqref{estimate} it follows that 
$$\|(T - T_\emptyset\otimes I)(f\otimes g)\| \leq 4\e + \|(T - S^{n_0})(f\otimes g)\|.$$
  We next evaluate $T$ and $S_{n_0}$ on $f\otimes g$. Since $T$ is $X$-diagonal we have
\begin{align*}
T(f\otimes g)&\stackrel{\eqref{T tensor identity eq1}}{=}\nu_{n_0}^{-1}\big(\!\!\!\sum_{L\in\mathcal{D}_{n_0}}\!\!\!a_L\big)\big(T^\emptyset f\big)\otimes h_\emptyset\\
 &\qquad+ \nu_{n_0}^{-1}\sum_{k=1}^{n_0}\nu_{k-1}\!\!\!\!\sum_{M\in\mathcal{D}_{k-1}}\Big(\!\!\!\!\sum_{\big\{\substack{L\in\mathcal{D}_{n_0}:\\ L\subset M}\big\}}\!\!\!\!\theta_k^La_L\Big)\big(T^Mf\big)\otimes\mu_Mh_M.
\end{align*}
For the other valuation note that for $L\in\mathcal{D}_{n_0}$ we have $S_{n_0}(f\otimes \mu_L\chi_L) = \big(T^Lf\big)\otimes \mu_L\chi_L$. Therefore,
\begin{align*}
S_{n_0}(f\otimes g) 
&\stackrel{\eqref{expand ellpeatwototheen}}{=} \nu_{n_0}^{-1}\big(\!\!\!\sum_{L\in\mathcal{D}_{n_0}}\!\!\!a_L\big(T^Lf\big)\big)\otimes h_\emptyset\\
 &\qquad+ \nu_{n_0}^{-1}\sum_{k=1}^{n_0}\nu_{k-1}\!\!\!\!\sum_{M\in\mathcal{D}_{k-1}}\Big(\!\!\!\!\sum_{\big\{\substack{L\in\mathcal{D}_{n_0}:\\ L\subset M}\big\}}\!\!\!\!\theta_k^La_L\big(T^Lf\big)\Big)\otimes\mu_Mh_M.
\end{align*}
Therefore,
\begin{align*}
\span\big\|(T - S_{n_0})(f\otimes g)\big\|\\ 
&\leq\nu_{n_0}^{-1}\sum_{L\in\mathcal{D}_{n_0}}|a_L|\underbrace{\|T^\emptyset - T^L\|}_{\leq\e} +\nu_{n_0}^{-1}\sum_{k=1}^{n_0}\nu_{k-1}\!\!\!\!\sum_{M\in\mathcal{D}_{k-1}}\!\sum_{\big\{\substack{L\in\mathcal{D}_{n_0}:\\ L\subset M}\big\}}\!\!\!\!|a_L|\underbrace{\|T^M - T^L\|}_{\leq \e|M|^2}\\
&\stackrel{\eqref{like Holder}}{\leq} \nu_{n_0}^{-1}\e\underbrace{\big\|\!\!\!\sum_{L\in\mathcal{D}_{n_0}}a_L\mu_L\chi_L\big\|}_{=1}\nu_{n_0}\underbrace{\nu_0^{-1}}_{=1}  \\
&\qquad+ \e\nu^{-1}_{n_0}\sum_{k=1}^{n_0}\nu_{k-1}\sum_{M\in\mathcal{D}_{k-1}}|M|^2\big\|\!\!\!\sum_{L\in\mathcal{D}_{n_0}}a_L\mu_L\chi_L\big\|\nu_{n_0}\nu_{k-1}^{-1}\\
&= \e + \e\sum_{k=1}^{n_0}\sum_{M\in\mathcal{D}_{k-1}}|M|^2 = \e + \e\sum_{k=1}^{n_0}\frac{2^{k-1}}{2^{2k-2}} \leq 3\e.
\end{align*}
In conclusion, $\|(T - T^\emptyset\otimes I)(f\otimes g)\| \leq 4\e + 3\e$.
\end{proof}

We give the proof of the main result.

\begin{proof}[Proof of Theorem \ref{main theorem}]
Recall that, by virtue of Proposition \ref{approximate factor transitive}, being an approximate 1-projectional factor is a transitive property, during which the compounded errors are under control. We successively apply Theorem \ref{arbitrary to X-diagonal}, Theorem \ref{stable entries} and Proposition \ref{T tensor identity} to find a bounded linear operator $S:L_1\to L_1$ so that $T$ is a 1-projectional factor with error $\e$ of $S\otimes I:L_1(X)\to L_1(X)$. By Theorem \ref{icebreaker}, $S$ is a 1-projectional factor with error $\e$ of a scalar operator $\la I:L_1\to L_1$ and therefore $S\otimes I:L_1(X)\to L_1(X)$ is a 1-projectional factor with error $\e$ of $\la I:L_1(X)\to L_1(X)$. Finally, $T$ is a 1-projectional factor with error $2\e$ of $\la I:L_1(X)\to L_1(X)$. Thus, our claim follows from Proposition \ref{primarity}.
\end{proof}

\section{Final discussion}
Characterizing the complemented subspaces of $L_1$ and those of $C(K)$
remain the most prominent problems in the study of decompositions of classical Banach spaces. This motivates in particular the study of biparameter spaces, especially those with an $L_1$ or $C(K)$ component. The proof, e.g., of primariness for each such type of space presents a different challenge and therefore an opportunity to extract new information on the structure of $L_1$ or $C(K)$ and their operators. Here is a list of classical biparameter spaces, for which primariness remains unresolved.
\begin{enumerate}[leftmargin=20pt,label=(\alph*)]

\item $L_p(L_1)$ for $1<p\leq \infty$.

\item $L_p(L_\infty)\simeq L_p(\ell_\infty)$ for $1\leq p<\infty$.

\item $\ell_p(C(K))$  for a compact metric space $K$ and $1\leq p\leq \infty$.

\item $L_p(C(K))$ for a compact metric space $K$ and $1\leq p\leq \infty$.

\item $C(K,\ell_p)$
  for a compact metric space $K$ and $1\leq p\leq \infty$.

\item $C(K,L_p)$ for a compact metric space $K$ and $1\leq p< \infty$.

\end{enumerate}
Noteworthily, all  the other biparameter Lebesgue spaces $\ell_p(\ell_q)$\cite{CasazzaKottmanLin1978}, $\ell_p(L_q)$\cite{CAP1}, $L_p(L_q)$ \cite{CAPLL},  $L_p(\ell_q)$\cite{CAPLX},  
and $\ell_\infty(L_q)$\cite{Wark2007}   ($1<p,q<\infty$) are known to be primary. The space $L_1(C[0,1])$ resists the approach of this paper but perhaps some of the tools developed here could be of some use. If this were to be resolved, it is conceivable, that techniques from \cite{lechner:motakis:mueller:schlumprecht:2019} may be useful in transcending the separability barrier to show that $L_1(L_\infty)$ is primary. Such methods may also be useful in the investigation of whether for non-separable RI space $X\neq L_\infty$, $L_1(X)$ is primary. In more generality, one may ask for what types of Banach spaces $X$, the spaces $L_1(X)$,  $L_p(X)$, $H_1(X)$ and $H_p(X)$ are primary.

For any two rearrangement invariant Banach function spaces $X$ and $Y$  on $[0,1]$ one can define the biparameter space  $X(Y)$ as the space of all functions $f:[0,1]^2\to \mathbb C$, for which, 
$f(s,\cdot)\in Y$ for all $s\in [0,1]$, and  $g=g_f: [0,1]\to \R$, $s\mapsto \|f(s,\cdot)\|_Y$ is in $X$. The norm of $f$ in  $X(Y)$ would then  be  $\|f\|_{X(Y)} = \| g_f\|_X$. It would be interesting to formulate general  conditions on $X$ and $Y$, which imply that $X(Y)$ is primary, or has the factorization property  (formulated below) with respect to some basis.

The above list may be expanded to the tri-parameter spaces, in which setting there has been little progress.

It is natural to study general conditions under which an operator T on a Banach space is a factor of the identity. A bounded linear operator $T$ on a Banach space $X$ with a Schauder basis $(e_n)_n$ is said to have {\em large diagonal} if $\inf_n|e_n^*(Te_n)| > 0$. If every operator on $X$ with large diagonal is a factor of the identity then we say that $X$ has the {\em factorization property}. The study of the factorization property and that of primariness are closely related. Our proof does not directly show that the spaces under investigation have the factorization property. We may therefore ask: for what Haar  system spaces $X$ and $Y$ does the biparameter Haar system $(h_I\otimes h_L)_{(I,L)\in\mathcal{D}^+\times\mathcal{D}^+}$ have the factorization property in $X(Y)$?

\bibliographystyle{abbrv}
 \bibliographystyle{alpha} \bibliographystyle{plain}
\bibliography{bibliography}

\end{document}